\documentclass[reqno, 11pt]{amsart} % draft
\usepackage{amsfonts, amsmath, amssymb, amsthm}
\usepackage[margin=2.75cm, heightrounded]{geometry}
\usepackage{fancybox}
\usepackage{hhline, float}
\usepackage{mathrsfs}
\usepackage{nccmath}
\usepackage[dvipsnames]{xcolor}
\usepackage[colorlinks=true]{hyperref}
\hypersetup{citecolor=red, linkcolor=blue}

\allowdisplaybreaks
\numberwithin{equation}{section}

\usepackage[dvipsnames]{xcolor}
\usepackage[colorlinks=true]{hyperref}
\hypersetup{citecolor=red, linkcolor=blue}

\usepackage{amsthm}

\newtheorem{theorem}{Theorem}[section]

\newtheorem{lemma}[theorem]{Lemma}
\newtheorem{proposition}[theorem]{Proposition}
\newtheorem{corollary}[theorem]{Corollary}

\theoremstyle{definition}
\newtheorem{definition}[theorem]{Definition}
\newtheorem{remark}[theorem]{Remark}

\makeatletter
\@namedef{subjclassname@2020}{%
\textup{2020} Mathematics Subject Classification}
\makeatother

\begin{document}

\title[Entropy instability and rigidity of self-similar profiles]
{Entropy instability and rigidity for the exponential semilinear heat equation}

\author{Yuxia Guo}
\address[Yuxia Guo]{Department of Mathematical Sciences, Tsinghua University}
\email{yguo@tsinghua.edu.cn}

\author{Jionghao Lv}
\address[Jionghao Lv]{Department of Mathematical Sciences, Tsinghua University}
\email{lvjh26@mails.tsinghua.edu.cn}

\begin{abstract}
In this paper, we study bounded self-similar solutions for the exponential semilinear heat equation via the $F$-functional and entropy. Motivated by recent developments in mean curvature flow, we prove that every non-constant bounded solution of the self-similar equation
is entropy unstable. We also show that the trivial profile $w\equiv0$ is quantitatively isolated among bounded solutions. These results provide both a variational instability theorem for nontrivial profiles and a rigidity theorem for the trivial profile.
\end{abstract}
\maketitle

{\textbf{Keywords:} Exponential semilinear heat equation,  Self-similar solutions, Entropy, Rigidity}

\section{Introduction}

We consider  the following Cauchy problem for the exponential semilinear heat equation:

\begin{equation}\label{initial problem}
\left\{
\begin{array}{l}
\partial_t u = \Delta u + e^{u},
\quad \text{in } \mathbb{R}^n \times (0,T), \\[0.3em]
u(\cdot,0) = u_0\in L^{\infty}(\mathbb{R}^{n}).
\end{array}
\right.
\end{equation}
\eqref{initial problem} is a basic reaction-diffusion model with a rapidly growing source term. In combustion theory, the evolution model \eqref{initial problem} was proposed by Frank-Kamenetskii \cite{FK-1969} and is known as the solid-fuel ignition model \cite{B-E-1989}. Related reaction-diffusion models also arise in localized chemical reactions due to the presence of catalysts \cite{Chadam-Y-1994}.

From the view point of mathmatics, an important topic in the study of  \eqref{initial problem} is the blow-up behavior, which has been extensively studied recently. One of the first blow-up results for model \eqref{initial problem} was obtained by Friedman and McLeod \cite{Friedman-M-1985}, in which the authors  established upper and lower bounds for the blow-up rate under some  conditions on the initial data, see also \cite{B-E-1989} for a similar estimate. Liu \cite{Liu-1989} derived the precise type I blow-up rate and proved the nondegeneracy of blow-up points. Fila and Pulkkinen \cite{Fila-P-2008} constructed a non-constant self-similar blow-up profile for \eqref{initial problem}, while Pulkkinen \cite{Pulkkinen-2011} further analyzed the corresponding final time blow-up profiles. Recently, Ji et al. \cite{Ji-L-C-2019} ruled out type II blow-up for radial solutions to \eqref{initial problem} in a range of dimensions. Souplet \cite{Souplet-2022} obtained the sharp upper estimates for the final profile and for the refined space-time behavior while Chabi \cite{Chabi-2025} extended these results to more general non-scale-invariant exponential-type nonlinearities. For further studies on the exponential semilinear heat equation, we refer to \cite{Fila-M-P-2005, Fujishima-2018, Ghoul-N-Z-2017, Ghoul-N-Z-2018, Herrero-V-1993, Souplet-T-2016, Tello-2006, V-1999} and the references therein.

A solution $u$ of \eqref{initial problem} is said to blow up in finite time at $T<\infty$ if
\[
\limsup_{t\rightarrow T} \|u(\cdot,t)\|_{L^\infty(\mathbb{R}^n)} = +\infty.
\]
According to  the growth rate of the solution near the singular time, finite-time blow-up can be divided into two distinct types: namely,  type I and type II. More precisely, the blow-up is said to be of type I if
\[
\limsup_{t\rightarrow T}
\bigl| \log(T-t) + \|u(\cdot,t)\|_{L^\infty(\mathbb{R}^n)} \bigr|
< +\infty,
\]
and of type II if
\[
\limsup_{t\rightarrow T}
\bigl| \log(T-t) + \|u(\cdot,t)\|_{L^\infty(\mathbb{R}^n)} \bigr|
= +\infty.
\]
In the type I regime, we consider the corresponding self-similar transform
\begin{equation*}
\widetilde w(y,\tau)=u(x,t)+\log(T-t),\quad x=\sqrt{T-t}\,y,\qquad T-t=e^{-\tau}.
\end{equation*}
If $u$ satisfies \eqref{initial problem}, then $\widetilde{w}$ satisfies
\begin{equation}\label{self-similar equation}
\partial_\tau \widetilde w-\Delta \widetilde w+\frac12 y\cdot\nabla \widetilde  w-e^{\widetilde  w}+1=0.
\end{equation}
In particular, a stationary solution $w$ of  \eqref{self-similar equation} satisfies
\begin{equation}\label{stationary problem}
\Delta w-\frac12 y\cdot\nabla w+e^{w}-1=0,\quad \text{in }\mathbb{R}^n.
\end{equation}
The trivial solution $w\equiv 0$ corresponds to the spatially homogeneous type I profile
$$u(x,t)= -\log(T-t).$$

One of the aims in this paper is to determine which bounded solutions of the self-similar equation \eqref{stationary problem} can serve as stable type I blow-up profiles. To study this problem, we follow the idea from Colding-Ilmanen-Minicozzi \cite{Colding-I-M2015}, Colding-Minicozzi  \cite{Colding-M-2012} and  Huisken \cite{Huisken-1990}, where the analysis of self-similar singularities in mean curvature flow  has been developed through Huisken's monotonicity formula \cite{Huisken-1990}, the entropy and stability theory of Colding-Minicozzi \cite{Colding-M-2012}, and the rigidity theorem of Colding-Ilmanen-Minicozzi \cite{Colding-I-M2015}.

Motivated by these developments, analogous variational and entropy-based methods have recently been applied to semilinear heat equations. For the supercritical Fujita equation, Wang-Wei-Wu \cite{Wang-W-W2025 F stability} introduced the $F$-functional, entropy, $F$-stability and entropy stability, and they proved that constant solutions minimize the entropy among bounded positive self-similar solutions. Moreover, they established an entropy gap between constant and non-constant solutions, with an application to the stratification and rectifiability of the type I blow-up set. In their subsequent work, they proved a Liouville theorem for ancient solutions which are close to the ODE profile at large scales, which implies a stability property for ODE blow-ups in the supercritical Fujita equation \cite{Wang-W-W2025 Liouville}. Recently, Ao et al. \cite{Ao-G-Y-Y2025} adapted this $F$-functional and entropy approach to the exponential semilinear heat equation \eqref{initial problem} and established the corresponding variational structure for bounded solutions of \eqref{stationary problem}. However, to our knowledge, the entropy stability of non-constant bounded solutions to \eqref{stationary problem} was still open in the exponential case.

In this paper we use  the Gaussian variational structure associated with \eqref{stationary problem} to study the entropy instability and rigidity  for the exponential semilinear heat equation.

Denote by $\lambda(w)$ the entropy associated with the profile $w$. Roughly speaking, $\lambda(w)$ measures the maximal value of the $F$-functional energy of the profile under all possible translations and dilations. We postpone the  precise definitions to Section 2.

Our first result shows that any bounded non-constant solution of \eqref{stationary problem} satisfying a mild decay assumption \eqref{eq:w-decay1} is entropy unstable.
\begin{theorem}\label{theorem 1}
    Assume $w$ is a bounded non-constant solution of \eqref{stationary problem}. If there exists a  positive constant $C$ such that
\begin{equation}\label{eq:w-decay1}
|w(y)|\le C(1+|y|)^{-1}\quad \text{in }\mathbb{R}^n,
\end{equation}
then there exists a variation $w_{s}$ with $w_{0}=w$ such that
\[
\lambda(w_{s})<\lambda(w)
\]
for all $s\neq0$. In particular, $w$ is not entropy stable.
\end{theorem}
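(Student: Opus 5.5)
The plan is to follow the Colding--Minicozzi scheme. We show that $w$ is a strict critical point of the entropy in the $(x_0,t_0)$ directions. We then produce a direction $h$ along which the $F$-functional decreases to second order, uniformly in all translations and dilations near the identity. Far-away translations and dilations are handled by a compactness argument that uses the decay \eqref{eq:w-decay1}. Throughout, $\rho=(4\pi)^{-n/2}e^{-|y|^2/4}$. I use the $F$-functional $F_{x_0,t_0}$ of Section 2, for which \eqref{stationary problem} is the Euler--Lagrange equation of $F_{0,1}$. I also use $\lambda(w)=\sup_{x_0,t_0}F_{x_0,t_0}(w)$, and the fact from \cite{Ao-G-Y-Y2025} that for a solution of \eqref{stationary problem} this supremum is attained at $(x_0,t_0)=(0,1)$.

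\textbf{Step 1 (spectral input).} Let $L=\Delta-\frac12 y\cdot\nabla+e^{w}$, which is self-adjoint on $L^2(\rho\,dy)$. With this sign convention, $\frac{d^2}{ds^2}F_{0,1}(w+sh)|_{s=0}=-\int hLh\,\rho$. Differentiating \eqref{stationary problem} in $y_i$ gives $L\partial_i w=\frac12\partial_i w$. The decay \eqref{eq:w-decay1}, together with elliptic estimates, puts $\partial_i w$ and $y\cdot\nabla w$ in the weighted space $H^1(\rho)$. Because $e^{w}$ is bounded, $L$ has discrete spectrum bounded above. Let $\mu_1$ be its top eigenvalue, with positive eigenfunction $\varphi$. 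Since $w$ is non-constant, some $\partial_i w\not\equiv0$. That function is an eigenfunction with eigenvalue $\frac12$, and it changes sign because $w$ is bounded. Hence $\mu_1>\frac12$ strictly, and $\varphi\perp\partial_i w$ in $L^2(\rho)$.

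\textbf{Step 2 (local second variation).} Set $w_s=w+s\varphi$. I compute the full Hessian of $G(s,x_0,t_0)=F_{x_0,t_0}(w_s)$ at $(0,0,1)$. Because $w$ is critical in all variables, $\nabla G=0$. The mixed $s$--$x_0$ entries are $\int\varphi\,L(\partial_iw)\rho$ up to constants, and these vanish by orthogonality. The $x_0$--$x_0$ block is negative by the translation computation, since $\partial_i w$ is an unstable direction for $L$. It remains to treat the $s$--$t_0$ coupling through the scaling generator $v=\frac12y\cdot\nabla w+1$. Here I compute $Lv$ using \eqref{stationary problem} and $\int(e^w-1)\rho=0$. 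I expect $\int\varphi\,Lv\,\rho$ to be expressible through $\mu_1\int\varphi v\rho$ plus a term from $L1=e^w$. The aim is to show that the $2\times2$ block in $(s,t_0)$ is negative definite, i.e.\ that its determinant has the right sign, using $\mu_1>\frac12$ and positivity of $\varphi$. If the pure choice $h=\varphi$ fails this, I would replace $h$ by $\varphi-c\,v$ with $c$ chosen to kill the off-diagonal term. Then $G(s,x_0,t_0)<G(0,0,1)=\lambda(w)$ for $0<|s|$ small and $(x_0,t_0)$ in a small neighbourhood of $(0,1)$.

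\textbf{Step 3 (far region) and the main obstacle.} For $(x_0,t_0)$ outside that neighbourhood, I claim $\sup F_{x_0,t_0}(w_s)<\lambda(w)$ for $|s|$ small. For $w$ itself, \cite{Ao-G-Y-Y2025} gives strict maximality of $F_{x_0,t_0}(w)$ at $(0,1)$ on compact sets away from it. As $|x_0|\to\infty$, or $t_0\to0$ or $t_0\to\infty$, the decay \eqref{eq:w-decay1} forces the rescaled profile to converge to the constant $0$ locally. So $F_{x_0,t_0}(w)\to F(0)$, and $F(0)<\lambda(w)$ by the entropy gap between constant and non-constant profiles. Uniform continuity in $s$ then gives the claim, using that $\varphi$ has at most polynomial growth. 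Combining the near and far regions yields $\lambda(w_s)<\lambda(w)$ for small $s\neq0$. Since the statement requires the inequality for all $s\neq0$, I then rescale the parameter $s$, e.g.\ $w_s=w+\delta\tanh(s)\varphi$ with $\delta$ small. I expect the main obstacle to be Step 2, specifically controlling the $s$--$t_0$ coupling: unlike the Fujita case, $L1=e^{w}$ is not a multiple of $1$, so the scaling generator $v$ is not an exact eigenfunction of $L$.
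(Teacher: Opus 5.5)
\textbf{Main gap (Step 2).} Your Step 2 is left open, and the obstacle you name does not exist: the scaling generator is an exact eigenfunction. In your convention, the commutator $[\Delta,y\cdot\nabla]=2\Delta$ together with \eqref{stationary problem} gives $L(y\cdot\nabla w)=2\Delta w=y\cdot\nabla w+2-2e^{w}$. Since $L(2)=2e^{w}$, it follows that $L(2+y\cdot\nabla w)=2+y\cdot\nabla w$, i.e.\ $Lv=v$. This is \eqref{eigen-1}, which comes from the family $2\log\lambda+w(\lambda y)$; the $2\log\lambda$ shift is exactly what absorbs $L1=e^{w}$.

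The idea you are missing is to combine this with Lemma \ref{lem not change sign}. For non-constant $w$, $\Lambda(w)=2+y\cdot\nabla w$ changes sign, so it cannot be the top eigenfunction. Hence $\mu_1>1$ (not merely $\mu_1>\frac12$), and in $L^2_\rho$ one has $\varphi\perp\Lambda(w)$, $\varphi\perp\partial_i w$ and $\Lambda(w)\perp\partial_i w$. With $h=\varphi$, all cross terms in Lemma \ref{lem second varaition} then vanish. The Hessian is diagonal with entries $-\mu_1\int\varphi^2\rho$, $-\frac12\int(\nabla w\cdot y_0)^2\rho$ and $-\frac14\int\Lambda(w)^2\rho$, all negative, which is what the paper uses.

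As you wrote Step 2, nothing closes. With only $\mu_1>\frac12$, the off-diagonal $(s,t_0)$ entry, a multiple of $\langle\Lambda(w),\varphi\rangle_\rho$, cannot be absorbed; absorbing it by Cauchy--Schwarz would need $\mu_1>1$. Your fallback $h=\varphi-cv$ does not help either. It puts $\langle v,Lv\rangle_\rho$ into the $s$--$s$ entry and $\langle\partial_i w,v\rangle_\rho$ into the $s$--$x_0$ entries, and you cannot control these without knowing $Lv=v$.

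\textbf{Second gap (Step 3).} The far-region uniformity in $s$ is not justified. $F_{x_0,t_0}(w)$ equals $E$ of $w(x_0+\sqrt{-t_0}\,z)+\log(-t_0)$, so in the paper's convention it tends to $-\infty$ as $t_0\to0^-$ or $t_0\to-\infty$, not to $F(0)$. You must show that this decrease dominates $F_{x_0,t_0}(w_s)-F_{x_0,t_0}(w)$ uniformly in $(x_0,t_0)$. That difference contains
$s(-t_0)\int\nabla w\cdot\nabla\varphi\,G$, $\frac{s^2}{2}(-t_0)\int|\nabla\varphi|^2G$ and $(-t_0)\int(e^{w+s\varphi}-e^{w})G$. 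If $\varphi$ merely grows polynomially, $w_s$ is unbounded and these terms can outgrow $|t_0|$, so the argument fails.

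The paper closes this with two decay lemmas:
\begin{itemize}
\item Lemma \ref{lem nabla and hessian w decay} gives $|\nabla w|\le C(1+|y|)^{-1}$ via the Ornstein--Uhlenbeck resolvent. This keeps $(-t_0)\int|\nabla w|^2G$ bounded, so the term $-(-t_0)\int e^{w_s}G\le -c|t_0|$ dominates.
\item Lemma \ref{lem eigenfunction decay} gives barrier bounds $f\lesssim(1+|y|)^{-\beta}$ and $|\nabla f|\lesssim(1+|y|)^{-\beta-1}$ with $0<\beta<-2(1+\lambda_1)$. This again relies on $\lambda_1<-1$, i.e.\ on the eigenfunction fact above.
\end{itemize}
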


Our second  result is a rigidity theorem for the trivial profile $w\equiv0$. We begin by introducing the following notation. For $R>1$, if $w_{1},w_{2}$ are two continuous functions on $\overline{B_{R}(0)}$, set
\begin{equation*}
\mathrm{dist}_R(w_1,w_2)=\sup_{y\in B_R(0)}|w_1(y)-w_2(y)|.
\end{equation*}

Using this metric, the following rigidity theorem asserts that the constant solution $0$ is strictly isolated in the space of bounded solutions.
\begin{theorem}\label{theorem 2}
    Let $m>0$. There exist $\widetilde{\varepsilon}$ and $\widetilde{R}$ depending only on $n,m$ such that if $w$ is a smooth solution of \eqref{stationary problem} in $\mathbb{R}^{n}$ satisfying
 $ \|w\|_{L^{\infty}(\mathbb{R}^{n})}\leq m,$ in $\overline{B_{\widetilde{R}}(0)}$ and
\begin{equation}\label{eq:rigidity}
\mathrm{dist}_{\widetilde{R}}(w,0)\leq\widetilde{\varepsilon},
\end{equation}
then $w\equiv0$.
\end{theorem}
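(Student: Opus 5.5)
The plan is to argue by contradiction and compactness, reducing the statement to a \emph{gap} property of the Gaussian $F$-functional at the trivial profile. Suppose the theorem fails for some $m>0$. Then there are $\varepsilon_j\to0$, $R_j\to\infty$ and smooth solutions $w_j\not\equiv0$ of \eqref{stationary problem} with $\|w_j\|_{L^\infty(\mathbb{R}^n)}\le m$ and $\mathrm{dist}_{R_j}(w_j,0)\le\varepsilon_j$.

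\textbf{Step 1 (uniform estimates and localisation).} Since $|w_j|\le m$ and $|e^{w_j}-1|\le e^m$, interior Schauder estimates on unit balls give $|\nabla w_j|+|\nabla^2 w_j|\le C(n,m)(1+|y|)$. Hence $w_j\to0$ in $C^2_{loc}$. Every Gaussian-weighted integral of $w_j,\nabla w_j,e^{w_j}$ splits into a part on $B_{R_j}$, which tends to the corresponding value for $0$, and a tail bounded by $C(n,m)\int_{|y|>R_j}(1+|y|)^2e^{-|y|^2/4}\,dy\to0$. In particular the $F$-functional at the standard centre and scale satisfies $F(w_j)\to F(0)$. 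The same holds for any fixed bounded set of centres and scales.

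\textbf{Step 2 (a gap at the trivial profile).} The key is to show that there is $\delta=\delta(n,m)>0$ such that every bounded nonzero solution $w$ with $|w|\le m$, and with Gaussian norm $\|w\|_{L^2(\rho)}$ sufficiently small, has $|F(w)-F(0)|\ge\delta$. Alternatively, one could show that small Gaussian norm forces $w\equiv0$ outright. Combined with Step 1, either version contradicts the choice of $w_j$.

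To prove this I would work in the Gaussian space $L^2(\rho)$, $\rho=e^{-|y|^2/4}$. Write the equation as $L_0w+w=-Q(w)$, where $L_0=\Delta-\tfrac12 y\cdot\nabla$ and
\[
Q(w)=e^w-1-w,\qquad Q(w)\ge c_m w^2 .
\]
Then:
\begin{itemize}
\item Testing against $1$ gives $\int w\rho=-\int Q(w)\rho\le -c_m\int w^2\rho$.
\item Testing against the Hermite polynomials of degree $k$, which satisfy $L_0H_k=-\tfrac k2H_k$, gives $(1-\tfrac k2)\langle w,H_k\rangle=-\langle Q(w),H_k\rangle$.
\end{itemize}
For $k\ne2$ the projections of $w$ are therefore quadratically small in $\|w\|_{L^2(\rho)}$, since $\|Q(w)\|\le C_m\|w\|_{L^4(\rho)}^2$ and the $L^4(\rho)$ norm is controlled using the bounds of Step 1. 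Consequently $w$ is, to leading order, a quadratic Hermite polynomial $q$.

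\textbf{Step 3 (main obstacle: the kernel $k=2$).} The real difficulty is the kernel of the linearised operator, spanned by the quadratic Hermite polynomials $y_iy_j-2\delta_{ij}$, on which the linear argument gives no control. The degree-zero identity already yields a second-order obstruction: $\langle w,1\rangle\le -c_m\|w\|^2$, while the projection of $w$ onto constants is $O(\|w\|^2)$. I would combine this with the $k=2$ compatibility condition $\langle Q(w),H_2\rangle=0$ and the sign information $Q\ge c_m w^2$, testing the equation against $|y|^2-2n$ and against $1$ simultaneously. The aim is to show that $w\approx q$ with $q\ne0$ forces a contradiction of the form $c_m\|q\|^2\le C\|q\|^3$.

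If this purely spectral route stalls, for example for traceless $q$ such as $y_1^2-y_2^2$ where cubic moments vanish by symmetry, I would instead invoke the variational structure of Section 2. Concretely, I would use the entropy/$F$-functional framework of \cite{Ao-G-Y-Y2025}, adapting the argument of Wang--Wei--Wu for the Fujita case, to obtain an entropy gap between $0$ and nontrivial bounded solutions. That gap supplies the $\delta$ in Step 2, and Step 1 then yields the contradiction.
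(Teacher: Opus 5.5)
Step 1 of your proposal is fine, but it only yields $\|w_j\|_{L^2_\rho}\to0$. Step 2, which you only sketch, is the whole content of the theorem, and the proposal does not prove it.

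The ``gap'' version is not an independent route. Testing the equation against $w\rho$ gives $\int_{\mathbb{R}^n}|\nabla w|^2\rho\,{\rm d}y=\int_{\mathbb{R}^n}(e^w-1)w\rho\,{\rm d}y$, so every bounded solution with $|w|\le m$ satisfies
\[
|E(w)-E(0)|=\Bigl|\tfrac12\int_{\mathbb{R}^n}(e^w-1)w\rho\,{\rm d}y-\int_{\mathbb{R}^n}(e^w-1-w)\rho\,{\rm d}y\Bigr|\le e^m\|w\|_{L^2_\rho}^2 .
\]
So ``nontrivial solutions of small Gaussian norm have $|F(w)-F(0)|\ge\delta$'' just says such solutions do not exist. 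Together with Step 1, that is the theorem itself. Your fallback has the same defect. Since $\lambda(w)=E(w)$ for solutions (Lemma~\ref{lem entropy attainable}), an entropy gap at $0$ restates what you want to prove. It is not among the results recalled in Section 2, and you give no argument for it.

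The spectral route breaks down before the kernel even matters. To make the $k\neq2$ projections quadratically small you need $\|Q(w)\|_{L^2_\rho}\lesssim\|w\|_{L^2_\rho}^2$, i.e.\ $\|w\|_{L^4_\rho}\lesssim\|w\|_{L^2_\rho}$. Step 1 only gives $\|w\|_{L^4_\rho}^4\le m^2\|w\|_{L^2_\rho}^2$, so the remainder is merely linear in $\|w\|_{L^2_\rho}$.

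This is the real difficulty, not a technicality. The hypothesis controls $w$ only on $B_R$. Nothing in your argument excludes a solution that is $\varepsilon$-small there but of size comparable to $m$ near $|y|\approx R$, with $\varepsilon^2$ much smaller than the Gaussian measure of that region. Its weighted mass then sits where $Q(w)$ is comparable to $w$, and the conclusion $w\approx q$ does not follow.

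At leading order the kernel is actually harmless. For $q=y^{T}Ay-2\operatorname{tr}A$ one computes that $\langle q^2,|y|^2-2n\rangle_\rho$ is a positive multiple of $\operatorname{tr}(A^2)$, traceless $A$ included. Your concern about $y_1^2-y_2^2$ comes from testing against $q$ rather than against $|y|^2-2n$.

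The missing idea is a way to propagate smallness from $B_R$ outward, which a Gaussian-weighted argument cannot see. The paper does this by an iteration of two steps.
\begin{itemize}
\item Proposition~\ref{prop 1} uses the weighted identity for $\tau=e^w/\Lambda(w)$ (Lemmas~\ref{lem tau equation}--\ref{lem pointwise nabla w and Lambda estimate}). This makes $\nabla w$ and $\nabla\Lambda(w)$ exponentially small on $B_{R-3}$ and improves $\varepsilon_0$ to $\varepsilon_0/10$.
\item Proposition~\ref{prop 2} uses the self-similar solution $u=-\log(-t)+w(x/\sqrt{-t})$ and short-time parabolic estimates to push the bound out to $B_{(1+\theta)R}$.
\end{itemize}
Alternating them gives $|\Lambda(w)-2|\le\varepsilon_0/10$ on all of $\mathbb{R}^n$. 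The conclusion $w\equiv0$ then comes from the sign of $\Lambda(w)$ via Lemma~\ref{lem not change sign}, not from any energy gap.
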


Theorems \ref{theorem 1} and \ref{theorem 2} complement each other. The first theorem excludes entropy stability for non-constant bounded profiles, while the second result shows that the trivial profile $w\equiv0$ is strictly isolated in the class of bounded solutions. In this sense, this paper provides both a variational instability result and a Liouville-type rigidity result for the self-similar equation \eqref{stationary problem}.

 Our main strategies to prove the results are as following.

The proof of the entropy instability (Theorem \ref{theorem 1}) mainly relies on constructing a suitable perturbation of the profile whose entropy is strictly smaller. A key step is to derive the  decay estimates for $\nabla w$, the first eigenfunction $f$ of the linearized operator associated with \eqref{stationary problem}, and its gradient $\nabla f$ at infinity. We obtain these pointwise estimates by rewriting \eqref{stationary problem} through the Ornstein-Uhlenbeck resolvent and analyzing the corresponding kernel representation (Lemma \ref{lem nabla and hessian w decay}), as well as by applying the maximum principle together with suitable barrier arguments (Lemma \ref{lem eigenfunction decay}). Then we perturb the profile $w$ in the direction of $f$. To rigorously show that this perturbation strictly decreases the global entropy, we analyze the multi-parameter functional over all translations and dilations.

For the rigidity result (Theorem \ref{theorem 2}), our proof relies mainly on an iteration scheme built on two complementary estimates. Starting from the assumption that the profile $w$ is sufficiently close to the constant profile $0$ on a large ball, this control becomes stronger on a slightly smaller ball, with the radius reduced by a fixed amount. This follows from elliptic regularity and weighted energy estimates, which force both the profile $w$ and an associated auxiliary quantity $\Lambda(w):=2+y\cdot\nabla w$ to be close to $0$ and $2$ respectively. The second estimate shows that once such an improved estimate is established, it can be propagated to a larger ball whose radius is increased by a fixed factor. By iterating these two estimates,  we can improve this control to arbitrarily large regions and eventually get global control. This global estimate implies that the auxiliary quantity $\Lambda(w)$ has a fixed sign, thereby proving the theorem.

The rest of the paper is organized as follows. Section 2 is the preliminaries, where we introduce some definitions and collect several lemmas that will be used later. In Section 3, we prove the entropy instability result (Theorem \ref{theorem 1}). In Section 4, we establish the rigidity theorem (Theorem \ref{theorem 2}).

Throughout the paper, we use $C$ to denote a positive constant which may vary from line to line.
\section{Preliminaries}

In this section, we introduce the $F$-functional and the entropy for the exponential semilinear heat equation. We also recall several definitions and auxiliary results. Most of these results are taken from \cite{Ao-G-Y-Y2025}, building on earlier works such as \cite{Wang-W-W2025 F stability}.

We begin by introducing the $F$-functional associated with the self-similar equation.
\begin{definition}[$F$-functional]
For any bounded smooth function $w$, the $F$-functional is defined by
\begin{equation*}
\begin{split}
F_{x_0,t_0}(w)
:=& \frac{1}{2}(-t_0)\int_{\mathbb{R}^n} |\nabla w|^2 G(y-x_0,t_0){\rm d}y
- (-t_0)\int_{\mathbb{R}^n} e^{w} G(y-x_0,t_0){\rm d}y  \\
&\quad + \int_{\mathbb{R}^n} w G(y-x_0,t_0){\rm d}y
+ \log(-t_0),
\end{split}
\end{equation*}
where for any $(y,t)\in\mathbb{R}^{n}\times(-\infty,0),$
\[
G(y,t)=(-4\pi t)^{-n/2}e^{\frac{|y|^2}{4t}}.
\]
In particular, we set
\begin{equation}\label{gaussian kernel}
    \rho(y):=G(y,-1)=(4\pi)^{-n/2}e^{-\frac{|y|^{2}}{4}},
\end{equation}
which is the Gaussian kernel. With this choice, we write the weighted energy by
\[
E(w):=F_{0,-1}(w),
\]
namely,
\begin{equation*}
E(w)
= \frac12\int_{\mathbb{R}^n}|\nabla w|^2\rho{\rm d}y
-\int_{\mathbb{R}^n}e^w\rho{\rm d}y
+\int_{\mathbb{R}^n}w\rho{\rm d}y.
\end{equation*}
\end{definition}
We next define the critical point of $F$-functional.
 \begin{definition}
     Let $w$ be a bounded smooth function. If for all variations $x(s)$, $t(s)$ satisfying $x(0)=x_{0}$, $t(0)=t_{0}$ and $\phi\in C_{0}^{\infty}(\mathbb{R}^{n})$, we have
\[
\left.\frac{\partial}{\partial s}
\bigl( F_{x(s),\,t(s)}(w + s\phi) \bigr)\right|_{s=0} = 0 ,
\]
then we say that $w$ is a critical point for the functional $F_{x_{0},t_{0}}$.
 \end{definition}
Since the value of $F$-functional depends on the choice of $(x_{0},t_{0})$, it is natural to consider its supremum over all such parameters. This leads to the definition of entropy.
\begin{definition}[Entropy]
    The entropy $\lambda(w)$ of a bounded smooth function $w$ is defined as
\[\lambda(w):=\sup_{x_0\in\mathbb{R}^n,\ t_0\in(-\infty,0)}F_{x_0,t_0}(w).\]
\end{definition}
Following \cite{Wang-W-W2025 F stability}, we also adopt the notions of $F$-stability and entropy stability.
\begin{definition}[$F$-stable]\label{def:f-stable}
    Let $w$ be a bounded solution of \eqref{stationary problem}. If for any $\phi\in C_{0}^{\infty}(\mathbb{R}^{n})$, there exists variations $x(s)$, $t(s)$ with
\[
x(0)=0,\quad t(0)=-1,\quad x'(0)=y_0,\quad t'(0)=h,\quad
x''(0)=y_0',\quad t''(0)=h',
\]
such that
\[
\left.
\frac{\partial^2}{\partial s^2}
\bigl( F_{x(s),\,t(s)}(w+s\phi) \bigr)
\right|_{s=0} \ge 0,
\]
then we say that $w$ is $F$-stable.
\end{definition}
\begin{definition} [Entropy stable]
    A bounded function $w$ is said to be entropy stable if the entropy functional attains a local minimum at $w$.
\end{definition}

We now introduce several weighted spaces which will be used in the spectral analysis of the linearized operator. Define
\[
L^q_\rho(\mathbb{R}^n)
=
\left\{
g \in L^q_{\mathrm{loc}}(\mathbb{R}^n)
\,:\,
\int_{\mathbb{R}^n} |g(y)|^q \rho {\rm d}y < \infty
\right\}
\]
and
\[
H^1_\rho(\mathbb{R}^n)
=
\left\{
g \in L^2_\rho(\mathbb{R}^n)
\,:\,
|\nabla g| + |g| \in L^2_\rho(\mathbb{R}^n)
\right\},
\]
where $\rho$ is defined in \eqref{gaussian kernel}. The inner product on $L^2_\rho(\mathbb{R}^n)$ is defined by
\begin{equation*}
\langle \psi_1, \psi_2 \rangle_\rho
=
\int_{\mathbb{R}^n} \psi_1 \psi_2 \rho {\rm d}y.
\end{equation*}
Then both $L^2_\rho(\mathbb{R}^n)$ and $H^1_\rho(\mathbb{R}^n)$ are Hilbert spaces.

For any bounded solution of \eqref{stationary problem}, we define the linear operator $\mathcal{L}$ by:
\begin{equation}\label{linear operator}
\mathcal{L}\psi=-\Delta \psi + \frac{y}{2}\cdot \nabla \psi - e^{w}\psi.
\end{equation}
It is easy to verify that $\mathcal{L}$ is a self-adjoint operator in $L^2_\rho(\mathbb{R}^n)$ with domain $H^1_\rho(\mathbb{R}^n)$ and the natural embedding $\iota : H^1_\rho(\mathbb{R}^n) \hookrightarrow L^2_\rho(\mathbb{R}^n)$ is compact. Therefore, by standard spectral theory, we obtain the following result, see Corollary 2.2 in \cite{Ao-G-Y-Y2025}.
\begin{corollary}\label{cor eigenvalue problem}
Suppose that $w$ is a bounded solution of \eqref{stationary problem} and $\mathcal{L}$ is defined in \eqref{linear operator}. Then
\begin{itemize}

\item[\textnormal{(1)}] $\mathcal L$ has a sequence of eigenvalues $\lambda_1 < \lambda_2 \le \cdots$.

\item[\textnormal{(2)}] There is an orthogonal basis $\{f_k\}$ in $L^2_{\rho}(\mathbb R^n)$ with $\mathcal{L}f_{k}=\lambda_{k}f_{k}$.

\item[\textnormal{(3)}] The smallest eigenvalue $\lambda_1$ admits the variational characterization
\begin{equation*}
    \lambda_1
= \inf_{\psi \in H^1_{\rho}(\mathbb{R}^n)}
\frac{\displaystyle \int_{\mathbb{R}^n} |\nabla \psi|^2 \rho{\rm d}y
      - \int_{\mathbb{R}^n} e^{w} \psi^2 \rho {\rm d}y}
     {\displaystyle \int_{\mathbb{R}^n} \psi^2 \rho {\rm d}y}.
\end{equation*}

\item[\textnormal{(4)}] Any eigenfunction associated with $\lambda_1$ does not change sign.
\end{itemize}
\end{corollary}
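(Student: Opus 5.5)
The statement to be proved is Corollary \ref{cor eigenvalue problem}. I would reduce it to the spectral theorem for self-adjoint operators with compact resolvent, using the weighted Dirichlet form
\[
Q(\psi,\psi)=\int_{\mathbb{R}^n}|\nabla\psi|^2\rho\,{\rm d}y-\int_{\mathbb{R}^n}e^{w}\psi^2\rho\,{\rm d}y ,\qquad \psi\in H^1_\rho(\mathbb{R}^n).
\]

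\textbf{Step 1: the quadratic form.} Since $\nabla\cdot(\rho\nabla\psi)=\rho(\Delta\psi-\tfrac y2\cdot\nabla\psi)$, integration by parts (first for $C_0^\infty$, then by density, which holds in $H^1_\rho$) gives
\[
\langle\mathcal L\psi,\phi\rangle_\rho=Q(\psi,\phi).
\]
Hence $\mathcal L$ is symmetric and is the Friedrichs operator associated with $Q$. Because $w$ is bounded, $e^{w}\le e^{\|w\|_\infty}=:M$. Therefore $Q(\psi,\psi)+(M+1)\|\psi\|^2_{L^2_\rho}$ is equivalent to the $H^1_\rho$ norm. This makes $Q$ closed and bounded below, and $\mathcal L$ is self-adjoint.

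\textbf{Step 2: discreteness of the spectrum.} This is the main analytic point. I need the compactness of $H^1_\rho\hookrightarrow L^2_\rho$, which I would prove via the Gaussian Poincaré/Hardy-type inequality
\[
\int|y|^2\psi^2\rho\le C\int(|\nabla\psi|^2+\psi^2)\rho .
\]
This follows by integrating $\nabla\cdot(y\rho)\psi^2$ by parts. The inequality gives uniform smallness of tails outside $B_R$, and Rellich on $B_R$ then gives compactness. Consequently $(\mathcal L+M+1)^{-1}$ is a compact, positive, self-adjoint operator on $L^2_\rho$. The spectral theorem yields eigenvalues $\lambda_1\le\lambda_2\le\cdots\to\infty$ and an orthonormal eigenbasis $\{f_k\}$, which proves (2). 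Minimizing the Rayleigh quotient $Q(\psi,\psi)/\|\psi\|^2$ over $H^1_\rho$ gives (3): a minimizing sequence is bounded in $H^1_\rho$, compactness passes to the limit, and weak lower semicontinuity shows the limit is a minimizer. The Euler–Lagrange equation shows it is an eigenfunction with eigenvalue $\lambda_1$.

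\textbf{Step 3: sign and simplicity, giving (4) and $\lambda_1<\lambda_2$.} If $f$ is a first eigenfunction, then $|f|\in H^1_\rho$ has the same Rayleigh quotient, since $|\nabla|f||=|\nabla f|$ a.e. So $|f|$ is also a minimizer and hence a weak solution of $-\Delta|f|+\tfrac y2\cdot\nabla|f|=(e^{w}+\lambda_1)|f|$. Elliptic regularity makes it smooth, and it is nonnegative and nontrivial. The strong maximum principle, or Harnack's inequality applied to $-\Delta u+\tfrac y2\cdot\nabla u+cu=0$ with bounded $c$, then gives $|f|>0$ everywhere, so $f$ does not change sign.

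If the $\lambda_1$-eigenspace had dimension at least two, some nonzero combination of two orthogonal first eigenfunctions would vanish at a given point. That combination is again a first eigenfunction, so this contradicts strict positivity. Equivalently, two positive functions cannot be $L^2_\rho$-orthogonal. Hence $\lambda_1$ is simple and $\lambda_1<\lambda_2$, which completes (1).

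The only genuinely nontrivial ingredient is the weighted compact embedding in Step 2. The rest is standard spectral theory.
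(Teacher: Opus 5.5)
Your proposal is correct and follows the same route as the paper, which asserts that $\mathcal{L}$ is self-adjoint on $L^2_\rho(\mathbb{R}^n)$ and that $H^1_\rho(\mathbb{R}^n)\hookrightarrow L^2_\rho(\mathbb{R}^n)$ is compact, and then invokes standard spectral theory (citing Ao et al.). You supply the details the paper omits: the Gaussian Hardy inequality for compactness, the $|f|$ argument with the strong maximum principle for the sign, and simplicity of $\lambda_1$ for $\lambda_1<\lambda_2$, and your form-theoretic framing (with $H^1_\rho$ as form domain rather than operator domain) is slightly more accurate than the paper's wording.
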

The following lemma shows two explicit eigenfunctions of the operator $\mathcal{L}$ arising from translations and scaling variations.
\begin{lemma}\label{lem eigenvalue problem}
   Suppose that $w$ is a bounded solution of \eqref{stationary problem}. Then
\begin{equation}\label{eigen -1/2}
    \mathcal{L} w_i = -\frac{1}{2} w_i, \qquad i = 1,2,\ldots,n,
\end{equation}
\begin{equation}\label{eigen-1}
    \mathcal{L}\,(y \cdot \nabla w + 2) = -(y \cdot \nabla w + 2),
\end{equation}
where $w_{i}=\partial_{i}w$.
\end{lemma}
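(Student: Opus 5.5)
The plan is to differentiate the stationary equation \eqref{stationary problem} directly, once in each coordinate direction and once along the dilation field $y\cdot\nabla$, and then read off the result in terms of $\mathcal L$. Since $w$ is a bounded solution, standard parabolic/elliptic regularity (the nonlinearity $e^w$ is smooth and bounded on the range of $w$) gives that $w$ is smooth. Hence all derivatives below are classical, and the identities hold pointwise in $\mathbb R^n$.

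For \eqref{eigen -1/2}, differentiate $\Delta w-\frac12 y\cdot\nabla w+e^w-1=0$ with respect to $y_i$. Because $\partial_i(y\cdot\nabla w)=w_i+y\cdot\nabla w_i$, this gives
\[
\Delta w_i-\tfrac12 y\cdot\nabla w_i-\tfrac12 w_i+e^w w_i=0 .
\]
Rearranged, this is $-\Delta w_i+\frac y2\cdot\nabla w_i-e^w w_i=-\frac12 w_i$, that is, $\mathcal L w_i=-\frac12 w_i$.

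For \eqref{eigen-1}, set $v=y\cdot\nabla w$ and compute the pieces of $\mathcal L v$ separately.
\begin{itemize}
\item The Laplacian satisfies $\Delta v=2\Delta w+y\cdot\nabla\Delta w$.
\item The drift term satisfies $y\cdot\nabla v=y\cdot\nabla w+\sum_{i,j}y_iy_j w_{ij}$.
\item Applying $y\cdot\nabla$ to the equation gives $y\cdot\nabla\Delta w-\frac12\bigl(y\cdot\nabla w+\sum y_iy_jw_{ij}\bigr)+e^w\,y\cdot\nabla w=0$, which we rewrite as $y\cdot\nabla\Delta w-\frac12 y\cdot\nabla v+e^wv=0$.
\end{itemize}
Combining the first item with the third yields $\Delta v-\frac12 y\cdot\nabla v+e^w v=2\Delta w$. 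We then substitute $\Delta w=\frac12 v-e^w+1$ from the equation, which gives
\[
\Delta v-\tfrac12 y\cdot\nabla v+e^w v=v-2e^w+2 ,
\]
that is, $\mathcal L v=-v+2e^w-2$.

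Since $\mathcal L 2=-2e^w$, we get
\[
\mathcal L(v+2)=-v+2e^w-2-2e^w=-(v+2),
\]
which is \eqref{eigen-1}.

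The only point needing care is justifying that these functions lie in the domain $H^1_\rho$, if eigenfunction is meant in the $L^2_\rho$ sense. Boundedness of $w$ together with interior estimates applied on unit balls gives at most polynomial growth of $\nabla w$ and $D^2w$, which is integrable against the Gaussian weight $\rho$. So the membership follows directly, and the computation itself is routine.
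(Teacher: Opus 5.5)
Your proof is correct. The identity $\mathcal{L}w_i=-\frac12 w_i$ is obtained exactly as in the paper, by differentiating the equation in $y_i$.

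For the second identity you take a different, though closely related, route. You apply $y\cdot\nabla$ to the equation and use the commutation formulas for $\Delta(y\cdot\nabla w)$ and $y\cdot\nabla(y\cdot\nabla w)$ to reach $\mathcal{L}(y\cdot\nabla w)=-y\cdot\nabla w+2e^w-2$. You then observe that adding the constant $2$ cancels the inhomogeneity, because $\mathcal{L}2=-2e^w$.

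The paper instead notes that $w_\lambda(y)=2\log\lambda+w(\lambda y)$ solves $\Delta w_\lambda-\frac{\lambda^2}{2}y\cdot\nabla w_\lambda+e^{w_\lambda}-\lambda^2=0$. It differentiates at $\lambda=1$, so that $\partial_\lambda w_\lambda|_{\lambda=1}=2+y\cdot\nabla w$ appears automatically.

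The two computations are equivalent, since the generator of that family is exactly $y\cdot\nabla+2$. What each buys is different:
\begin{itemize}
\item The paper's version explains where the eigenfunction comes from. $w_\lambda$ is the time slice $u(\cdot,-\lambda^{-2})$ of the self-similar solution $u(x,t)=-\log(-t)+w(x/\sqrt{-t})$. Hence $2+y\cdot\nabla w=2\,\partial_t u(\cdot,-1)$, with the $2$ coming from $-\log(-t)$. This is the same identity exploited in the proof of Proposition~\ref{prop 2}.
\item Your version is a self-contained verification that needs no auxiliary family. It makes the role of the constant explicit as a cancellation.
\end{itemize}

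Your closing remark about membership in $H^1_\rho$ goes beyond what the paper proves here, and the conclusion is right. To make the polynomial growth explicit, apply interior estimates on balls of radius comparable to $(1+|x|)^{-1}$, as in Lemma~\ref{lem pointwise nabla w and Lambda estimate}, where the rescaled drift is bounded. On unit balls the constants depend on the size of the drift in an untracked way.
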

\begin{proof}
    Since $w$ solves \eqref{stationary problem}, for  $i=1,2,\cdots,n$, we derive
\[
\Delta w_i-\frac{y}{2}\cdot\nabla w_i-\frac12 w_i+w_{i}e^{w}=0.
\]
Hence we prove \eqref{eigen -1/2}.

To get \eqref{eigen-1}, we consider the exponential-type transformations:$$w_\lambda(y)=2\log\lambda+w(\lambda y).$$ Since $w$ satisfies \eqref{stationary problem}, $w_{\lambda}$ satisfies
\[
\Delta w_\lambda
-\frac{\lambda^2}{2}y\cdot\nabla w_\lambda
+e^{w_\lambda}
-\lambda^2
=0.
\]
Taking derivative with respect to $\lambda$ at $\lambda=1$, we obtain
\begin{equation*}
0=\Delta(2+y\cdot\nabla w)
-\frac{y}{2}\cdot\nabla(2+y\cdot\nabla w)
+e^{w}(2+y\cdot\nabla w)
-(2+y\cdot\nabla w),
\end{equation*}
which is \eqref{eigen-1}.
\end{proof}

% The following regularity result for $u$ is taken from \cite{Ao-G-Y-Y2025}.
% \begin{lemma}\label{lem regularity u}
% Suppose $u$ is a solution of \eqref{initial problem} and satisfies
% \begin{equation*}
%     |\log(-t) + u(x,t)| \le C_1, \quad \text{in } \mathbb{R}^n \times (-\infty, 0).
% \end{equation*}
% Then
% \begin{equation*}
%     (-t)^{\frac{1}{2}}|\nabla u| + (-t)|\nabla^2 u| + (-t)^{\frac{3}{2}}|\nabla^3 u| \le C, \quad \text{in } \mathbb{R}^n \times (-\infty, 0),
% \end{equation*}
% where $C$ depends only on $n$ and $C_{1}$.
% \end{lemma}

The next lemma characterizes the constant solution of \eqref{stationary problem}.
\begin{lemma}\cite{Ao-G-Y-Y2025}\label{lem not change sign}
Suppose that $w$ is a bounded solution of \eqref{stationary problem}. Then $w$ is the constant solution of\eqref{stationary problem} if and only if the function
\begin{equation*}
\Lambda(w)(y) = 2 + y \cdot \nabla w(y)
\end{equation*}
does not change the sign in $\mathbb{R}^{n}$.
\end{lemma}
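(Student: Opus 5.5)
Both implications will be proved through the eigenfunction identity \eqref{eigen-1}, $\mathcal L\Lambda=-\Lambda$ with $\Lambda=\Lambda(w)=2+y\cdot\nabla w$. First we establish a priori regularity. Since $w$ is bounded, interior Schauder estimates give $\nabla w$ bounded on unit balls. The drift term $\frac12 y\cdot\nabla w$ grows at most linearly, so $|\nabla w|$ is at worst polynomially bounded. This is enough to conclude $\Lambda\in H^1_\rho(\mathbb R^n)$ and to justify the integrations by parts against $\rho$ used below.

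\emph{If $w$ is constant.} Then $e^w=1$, so $w\equiv0$ and $\Lambda\equiv2>0$. In particular $\Lambda$ does not change sign.

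\emph{If $\Lambda$ does not change sign.} We may assume $\Lambda\ge0$ and $\Lambda\not\equiv0$. (If $\Lambda\equiv0$, then $y\cdot\nabla w=-2$; restricting to a ray gives $w(ry/|y|)=-2\log r+c$, which is unbounded, a contradiction.) By \eqref{eigen-1}, $\Lambda$ is a nonnegative eigenfunction of $\mathcal L$ with eigenvalue $-1$.

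We first show that $-1=\lambda_1$. A first eigenfunction $f_1$ can be taken positive by Corollary \ref{cor eigenvalue problem}(4) together with the strong maximum principle. Self-adjointness gives
\[
(\lambda_1+1)\langle f_1,\Lambda\rangle_\rho=0 .
\]
Since $f_1>0$ and $\Lambda\ge0$ with $\Lambda\not\equiv0$, we have $\langle f_1,\Lambda\rangle_\rho>0$, hence $\lambda_1=-1$. The same orthogonality argument shows that any eigenfunction with eigenvalue different from $\lambda_1$ is orthogonal to the positive function $\Lambda$, so it must change sign.

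Next we test the directional eigenfunctions $w_i$ against the variational characterization. By \eqref{eigen -1/2}, $\mathcal L w_i=-\frac12 w_i$, and $-\frac12>-1=\lambda_1$. So either $w_i\equiv0$, or $w_i$ is a genuine eigenfunction for a larger eigenvalue and hence sign-changing. This alone does not yield a contradiction, so a further identity is needed.

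The decisive step uses the ground state. Since $\lambda_1=-1$ is simple, $\Lambda$ is a positive multiple of $f_1$, so $\Lambda>0$ everywhere. Integrating \eqref{stationary problem} against $\rho$ gives, using $\operatorname{div}(\rho\nabla w)=\rho(\Delta w-\frac y2\cdot\nabla w)$,
\[
\int_{\mathbb R^n} (e^w-1)\rho\,{\rm d}y=0 .
\]
Similarly, integrating $\mathcal L\Lambda=-\Lambda$ against $\rho$ gives
\[
\int_{\mathbb R^n} e^w\Lambda\rho\,{\rm d}y=\int_{\mathbb R^n}\Lambda\rho\,{\rm d}y,\qquad\text{i.e.}\qquad \int_{\mathbb R^n}(e^w-1)\Lambda\rho\,{\rm d}y=0 .
\]
Now write $\Lambda=2+y\cdot\nabla w$ and compute $\int (e^w-1)\,y\cdot\nabla w\,\rho$. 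Using $\nabla(e^w-w)=(e^w-1)\nabla w$ and $\operatorname{div}(y\rho)=(n-\frac{|y|^2}{2})\rho$, integration by parts gives
\[
\int_{\mathbb R^n}(e^w-1)\,y\cdot\nabla w\,\rho\,{\rm d}y=-\int_{\mathbb R^n}(e^w-w)\Bigl(n-\frac{|y|^2}{2}\Bigr)\rho\,{\rm d}y .
\]

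The plan is to combine this with the Pohozaev-type identity obtained by multiplying \eqref{stationary problem} by $y\cdot\nabla w\,\rho$. The aim is a signed identity of the form $\int|\nabla w|^2\,g(y)\rho=0$ with $g>0$, or alternatively $\int(e^w-1-w)\,h\,\rho=0$ with $h$ signed, which would force $w$ to be constant.

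\textbf{Main obstacle.} This step is where I expect the difficulty to lie, since the weights need not have a sign. As a fallback I would use the parabolic interpretation. Positivity of $\Lambda$ means $\partial_\lambda w_\lambda>0$, i.e. $\widetilde w$ is monotone along the scaling flow. Then $u(x,t)=w(x/\sqrt{-t})-\log(-t)$ satisfies $\partial_t u=\frac{\Lambda}{2(-t)}>0$. So $u$ is a monotone ancient solution with $u\ge -\log(-t)-C$, and one would show that monotone self-similar blow-up forces a spatially constant profile by comparing with the ODE solution.
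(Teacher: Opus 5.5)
\textbf{There is a genuine gap at the decisive step.} The paper quotes this lemma from \cite{Ao-G-Y-Y2025} without proof, so your argument has to stand on its own. It does not: the implication ``$\Lambda(w)\ge 0 \Rightarrow w$ constant'' is never established.

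Everything up to $\lambda_1=-1$ and $\Lambda(w)>0$ is fine. Note also that $\Lambda(w)(0)=2$, which excludes $\Lambda(w)\equiv 0$ and $\Lambda(w)\le 0$ immediately, so the ray argument is unnecessary.

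After that point, the Pohozaev plan is only an aim. The identities
\[
\int_{\mathbb R^n}(e^w-1)\rho\,{\rm d}y=0,\qquad \int_{\mathbb R^n}(e^w-1)\Lambda(w)\rho\,{\rm d}y=0,
\]
and any Pohozaev identity obtained by testing \eqref{stationary problem} against $y\cdot\nabla w\,\rho$, hold for every bounded solution, whatever the sign of $\Lambda(w)$. So the plan never uses the hypothesis beyond the step $\lambda_1=-1$. In effect it would have to prove that every bounded solution is constant, which is much stronger than the lemma, and you give no mechanism producing the signed weight $g$ or $h$. The parabolic fallback (``compare with the ODE solution'') is a heuristic, not an argument: $\partial_t u>0$ by itself does not force $u$ to be spatially constant.

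\textbf{The missing idea is to feed $\lambda_1=-1$ back into the variational characterization.} Use Corollary~\ref{cor eigenvalue problem}(3) with the test function $\psi\equiv 1\in H^1_\rho(\mathbb R^n)$. Since $\int_{\mathbb R^n}\rho\,{\rm d}y=1$ and you already showed $\int(e^w-1)\rho\,{\rm d}y=0$, the Rayleigh quotient of $1$ is
\[
-\int_{\mathbb R^n}e^w\rho\,{\rm d}y=-1=\lambda_1 .
\]
Hence $1$ is a minimizer. The first variation along $\psi=1+\epsilon\varphi$, $\varphi\in C_0^\infty(\mathbb R^n)$, gives $\int(e^w-1)\varphi\rho\,{\rm d}y=0$ for all $\varphi$. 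Therefore $e^w\equiv 1$, i.e.\ $w\equiv 0$.

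Equivalently, by simplicity of $\lambda_1$ you get $1=c\,\Lambda(w)$. Evaluating at $y=0$ gives $c=\tfrac12$, so $y\cdot\nabla w\equiv 0$, and $w$ is constant along rays, hence constant. This computation also shows $\lambda_1\le -1$ for every bounded solution, with equality only when $w\equiv 0$.

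\textbf{An alternative closing avoids spectral theory.} It is the Huisken-type argument the paper sets up in Section~4. Once $\Lambda(w)>0$ everywhere, $\tau=e^w/\Lambda(w)$ satisfies \eqref{tau eq2} on all of $\mathbb R^n$. Test it with $\phi^2\Lambda(w)^2\rho$ exactly as in the proof of Lemma~\ref{lem nabla w and Lambda integral estimate}, taking a cutoff $\phi$ equal to $1$ on $B_R$ with $|\nabla\phi|\le 2/R$, and use $\tau^2\Lambda(w)^2=e^{2w}$. This gives
\[
\int_{B_R}e^{2w}|\nabla w|^2\rho\,{\rm d}y\le 4\int_{\mathbb R^n}e^{2w}|\nabla\phi|^2\rho\,{\rm d}y\le C R^{-2}.
\]
Letting $R\to\infty$ yields $\nabla w\equiv 0$.
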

The next result shows that non-constant bounded solutions are not $F$-stable.
\begin{lemma}\cite{Ao-G-Y-Y2025}\label{lem F-stable}
   Suppose that $w$ is a bounded solution of \eqref{stationary problem}.
If $w$ is not the constant solution, then $w$ is not $F$-stable. Moreover, for any $\varepsilon>0$ sufficiently small, there exists $\delta>0$ such that
\[
   \sup\Big\{F_{x_0,t_0}(w): |x_0|+|\log(-t_0)|> \varepsilon\Big\}\le \lambda(w)-\delta.
\]
\end{lemma}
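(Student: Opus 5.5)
The plan has two parts: a second-variation computation at $(x_0,t_0)=(0,-1)$ using Corollary \ref{cor eigenvalue problem} and Lemma \ref{lem eigenvalue problem} to show $w$ is not $F$-stable, and a monotonicity plus compactness argument for the uniform entropy gap.

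\emph{Not $F$-stable.} First I check that $(0,-1)$ is a critical point of $F$ whenever $w$ solves \eqref{stationary problem}. The $\phi$-derivative is $\int(\nabla w\cdot\nabla\phi-e^w\phi+\phi)\rho\,dy$, which vanishes after integrating by parts against $\rho$. The $x$- and $t$-derivatives reduce to Gaussian moments of the equation and vanish for the same reason. Next I compute the full second variation along $x(s),t(s)$ with $x'(0)=y_0$, $t'(0)=h$ and $\phi$. Since $(0,-1)$ is critical, the terms involving $x''(0),t''(0)$ drop out. Differentiating $G(y-x,t)$ in $x$ and $t$ produces the functions $w_i=\partial_i w$ and $\Lambda(w)=2+y\cdot\nabla w$. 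The result should take the form
\[
\langle \mathcal L\phi,\phi\rangle_\rho \;+\; 2\Big\langle \phi,\ \sum_i a_i w_i + b\,\Lambda(w)\Big\rangle_\rho \;-\; Q(y_0,h),
\]
where $a_i$ is proportional to $(y_0)_i$, $b$ is proportional to $h$, and $Q$ is a nonnegative quadratic form built from $\|y_0\cdot\nabla w\|^2_{L^2_\rho}$ and $h^2\|\Lambda(w)\|_{L^2_\rho}^2$. The signs here must be checked carefully against the convention in Definition \ref{def:f-stable}.

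I then take $\phi=f$, the first eigenfunction from Corollary \ref{cor eigenvalue problem}. By Lemma \ref{lem eigenvalue problem}, $\Lambda(w)$ is an eigenfunction with eigenvalue $-1$. Since $w$ is not constant, Lemma \ref{lem not change sign} says $\Lambda(w)$ changes sign, so it cannot be a first eigenfunction (item (4) of the Corollary), and therefore $\lambda_1<-1<-\tfrac12$. It follows that $f$ is $L^2_\rho$-orthogonal to $\Lambda(w)$ and to every $w_i$, so the cross terms vanish. What remains is $\lambda_1\|f\|^2_\rho-Q(y_0,h)<0$ for every choice of $(y_0,h,y_0',h')$.

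Because $f\notin C_0^\infty$, I replace it by $\chi_R f$ with a cutoff $\chi_R$. The error in each term tends to $0$ as $R\to\infty$, using $f\in H^1_\rho$ and the boundedness of $w$. The cross terms then become small rather than zero, but they are linear in $(y_0,h)$ while $Q$ is quadratic, so completing the square keeps the total second variation strictly negative. This shows $w$ is not $F$-stable.

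\emph{Entropy gap.} Set $u(x,t)=w(x/\sqrt{-t})-\log(-t)$, which is an ancient solution of \eqref{initial problem}. For this $u$, the value $F_{x_0,t_0}(w)$ equals the Giga--Kohn type localized energy centered at $(x_0,0)$ evaluated at time $t_0$, after rescaling. The monotonicity formula of \cite{Ao-G-Y-Y2025} (the analogue of Huisken's) then gives $F_{x_0,t_0}(w)\le F_{0,-1}(w)=\lambda(w)$, with equality only if $u$ is backward self-similar with respect to the spacetime point corresponding to $(x_0,t_0)$. For non-constant $w$, self-similarity about two distinct points would make $w$ invariant under a translation or under the scaling $w\mapsto 2\log\lambda+w(\lambda\,\cdot)$. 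Translation invariance is excluded because $\partial_i w$ would satisfy a first-order relation incompatible with \eqref{eigen -1/2}; scaling invariance would force $\Lambda(w)\equiv0$, contradicting Lemma \ref{lem not change sign}. Hence the supremum is attained only at $(0,-1)$.

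The uniform $\delta$ comes from continuity of $(x_0,t_0)\mapsto F_{x_0,t_0}(w)$ together with control at infinity. As $t_0\to0^-$ we have $F\approx w(x_0)+\log(-t_0)\to-\infty$. As $-t_0\to\infty$ or $|x_0|\to\infty$, I need $F$ to stay below $\lambda(w)-\delta$. This is the main obstacle: the $\log(-t_0)$ term grows, and $-(-t_0)\int e^w G$ must dominate it. I would first try to use the monotonicity inequality quantitatively, bounding $\lambda(w)-F_{x_0,t_0}(w)$ below by the integrated squared self-similarity defect $\int\!\!\int|\partial_t u+\tfrac{x-x_0}{2(-t)}\cdot\nabla u+\tfrac{1}{-t}|^2G$. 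I would then show that this defect is bounded away from zero outside any neighborhood of $(0,-1)$ by a compactness argument applied to limits of translated and dilated profiles.
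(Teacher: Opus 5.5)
The paper does not prove this lemma; it quotes it from \cite{Ao-G-Y-Y2025}. The closest in-paper argument is Step (1) of the proof of Theorem~\ref{theorem 1} together with Remark~\ref{rem 1}.

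Your first half (non-$F$-stability) is correct and is the same computation as that Step (1). You get $\lambda_1<-1$ because $\Lambda(w)$ is a sign-changing eigenfunction with eigenvalue $-1$, so $f$ is $L^2_\rho$-orthogonal to $\Lambda(w)$ and to each $w_i$. Replacing $f$ by $\chi_R f$ legitimately produces a $C_0^\infty$ test function, a point the paper glosses over. Completing the square is fine because the cross term $\int(y_0\cdot\nabla w)\phi\rho\,dy$ vanishes identically for any $y_0$ with $\|y_0\cdot\nabla w\|_{L^2_\rho}=0$. Note that here you implicitly allow $Q$ to be degenerate in $y_0$, which is exactly the case your second half then claims to rule out.

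The genuine gap is in the entropy-gap part. Your exclusion of translation invariance does not work. If $x_0\cdot\nabla w\equiv0$, the function $x_0\cdot\nabla w$ is identically zero, and zero trivially satisfies \eqref{eigen -1/2}, so there is no incompatibility. Worse, the gap claim fails for any solution that splits. Suppose $\bar w$ is a bounded non-constant solution of \eqref{stationary problem} in $\mathbb{R}^{n-1}$, and set $w(y',y_n):=\bar w(y')$. Then $w$ is a bounded non-constant solution in $\mathbb{R}^n$. Since $\rho$ factorizes, $F_{se_n,-1}(w)=F_{0,-1}(w)=\lambda(w)$ for every $s\in\mathbb{R}$, so no $\delta>0$ can exist. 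What is missing is a non-splitting hypothesis, the analogue of ``$\Sigma$ does not split off a line'' in the Colding--Minicozzi entropy-gap lemma \cite{Colding-M-2012}. In this paper that input appears only under the decay assumption \eqref{eq:w-decay1}, which Remark~\ref{rem 1} uses to get $\nabla w\cdot y_0\not\equiv0$. With such a hypothesis added, your monotonicity argument does give the pointwise strict inequality $F_{x_0,t_0}(w)<\lambda(w)$ for $(x_0,t_0)\neq(0,-1)$. Your treatment of the time-shift case is fine; in fact $\Lambda(w)(0)=2$ rules out $\Lambda(w)\equiv0$ immediately.

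Separately, the uniform $\delta$ is not actually established. You correctly identify $-t_0\to\infty$ and $|x_0|\to\infty$ as the obstacle, but you only describe a compactness strategy. That strategy runs into the same missing input. For translates $v_k=w(\cdot+a_k)$ with $|a_k|\to\infty$, the equation gives $\tfrac12 a_k\cdot\nabla v_k=O(1)$ locally, given global $C^2$ bounds on $w$. So any limit is invariant in the limiting direction of $a_k$, and you would still need to show that such split limits cannot carry $F$-values close to $\lambda(w)$. In the paper's application, this far-field control comes instead from explicit decay estimates: Lemmas~\ref{lem nabla and hessian w decay} and \ref{lem eigenfunction decay}, and Steps (4)--(5) of the proof of Theorem~\ref{theorem 1}. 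These again rest on \eqref{eq:w-decay1}.
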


The following lemma characterizes critical points of the $F$-functional.
\begin{lemma}\cite{Ao-G-Y-Y2025}\label{lem critical point}
   A bounded smooth function $w$ is a critical point of $F_{x_{0},t_{0}}$ if and only if $w$ solves
\[
\Delta w
+ \frac{y-x_0}{2t_0}\cdot\nabla w
+ e^{w}
+ \frac{1}{t_0}
=0,
\quad \text{in } \mathbb{R}^n.
\]
\end{lemma}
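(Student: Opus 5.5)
The statement is the characterization of critical points in Lemma \ref{lem critical point}. I would compute the first variation of $F_{x(s),t(s)}(w+s\phi)$ directly and split it into a part coming from the variation of $w$ and a part coming from the variation of the parameters $(x_0,t_0)$.

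\textbf{Variation of $w$.} Differentiating in the direction $\phi\in C_0^\infty(\mathbb{R}^n)$ at fixed $(x_0,t_0)$ gives
\[
(-t_0)\int \nabla w\cdot\nabla\phi\, G - (-t_0)\int e^{w}\phi\, G + \int \phi\, G ,
\]
with $G=G(y-x_0,t_0)$. Integration by parts has no boundary terms, since $\phi$ has compact support. Using $\nabla G = \frac{y-x_0}{2t_0}G$, the divergence term is $-\operatorname{div}(G\nabla w)=-G\bigl(\Delta w+\frac{y-x_0}{2t_0}\cdot\nabla w\bigr)$. The $\phi$-variation therefore equals
\[
\int \phi\, G\,\Bigl[(-t_0)\bigl(-\Delta w-\tfrac{y-x_0}{2t_0}\cdot\nabla w - e^{w}\bigr)+1\Bigr].
\]
Dividing by $-t_0>0$, this vanishes for all $\phi$ exactly when $\Delta w+\frac{y-x_0}{2t_0}\cdot\nabla w+e^{w}+\frac1{t_0}=0$. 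This gives necessity, and it is the required equation.

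\textbf{Variation of the parameters.} For sufficiency I must also check that the derivatives in $x_0$ and $t_0$ vanish once the equation holds.

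For $x_0$: $\partial_{x_0}G=-\nabla_y G$. Integrating by parts (legitimate because $w$ is bounded, hence $|\nabla w|$ is polynomially controlled by elliptic estimates, and $G$ decays like a Gaussian), the $x_0$-derivative becomes $\int \nabla_y(\text{integrand})\,G$. That is,
\[
(-t_0)\int \nabla^2 w\,\nabla w\, G-(-t_0)\int e^{w}\nabla w\, G+\int\nabla w\, G .
\]
Now pair the equation with the test function $\nabla w$ in the weighted form $\operatorname{div}(G\nabla w)=G(\Delta w+\frac{y-x_0}{2t_0}\cdot\nabla w)$. This gives $(-t_0)\int G\nabla^2w\nabla w = -(-t_0)\int \Delta w\,\nabla w\, G - \ldots$, and the combination equals $\int G\,\nabla w\,[\ldots\text{equation}\ldots]=0$. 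The cleanest way to say this is that the $x_0$-derivative is exactly the $\phi$-variation with $\phi=\partial_i w$, justified by cutoff approximation. It therefore vanishes.

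For $t_0$: the derivative in $t_0$ likewise equals the $\phi$-variation with $\phi=\frac{1}{2t_0}\bigl((y-x_0)\cdot\nabla w+2\bigr)$. The scaling structure here is exactly that of Lemma \ref{lem eigenvalue problem}, and the $\log(-t_0)$ term accounts for the constant $2$. Hence it also vanishes by the equation.

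\textbf{Main obstacle.} The main work is the bookkeeping in the $t_0$-derivative, where $\partial_{t_0}G=\bigl(-\frac{n}{2t_0}-\frac{|y-x_0|^2}{4t_0^2}\bigr)G$ must be converted via $\Delta_y G=\partial_{t_0}G$ with the appropriate sign. Alongside this, the cutoff approximations require justifying that the boundary terms vanish. That justification uses boundedness of $w$ and of its derivatives, which follows from interior Schauder estimates for the equation, together with Gaussian decay of $G$.
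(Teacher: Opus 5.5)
Your proof is correct. The paper gives no proof of this lemma; it imports it from Ao et al. So the only comparison is with the standard first-variation argument, and yours is that argument. Necessity comes from $\phi$-variations with the parameters frozen. Sufficiency comes from showing that the $x_0$- and $t_0$-derivatives of $F$ are themselves first variations in the directions $w_i$ and $\frac{1}{2t_0}\bigl((y-x_0)\cdot\nabla w+2\bigr)$, and these vanish once the equation holds pointwise.

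The one step you assert rather than compute is the $t_0$ identity. It is true. The quickest way to get both parameter identities at once is a change of variables. Set $\tilde w(z)=w(x_0+\sqrt{-t_0}\,z)+\log(-t_0)$; then $F_{x_0,t_0}(w)=E(\tilde w)$. Differentiating $\tilde w$ in $x_0^i$ gives $w_i$. Differentiating in $t_0$ gives $\frac{1}{2t_0}\bigl((y-x_0)\cdot\nabla w+2\bigr)$, both evaluated at $y=x_0+\sqrt{-t_0}\,z$. Here the term $\frac{d}{dt_0}\log(-t_0)=\frac{1}{t_0}=\frac{1}{t_0}\int G$ produces the constant $2$. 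If you compute directly instead, the sign is $\partial_{t_0}G=-\Delta_y G$, since $G$ solves the backward heat equation. After one integration by parts, using $\nabla G=\frac{y-x_0}{2t_0}G$, the two expressions match term by term.

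For the cutoff approximation you only need polynomial growth of $\nabla w$ and $\nabla^2 w$. This follows from interior estimates on balls of radius $(1+|y|)^{-1}$, where the rescaled drift is bounded. Your closing claim that interior Schauder estimates give bounded derivatives does not follow from unit-scale estimates, because the drift coefficient is unbounded. You do not need that claim.
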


Finally, we state two lemmas about the second variation formula and the entropy attainment.
\begin{lemma}\cite{Ao-G-Y-Y2025}\label{lem second varaition}
 Suppose that $w$ is a bounded solution of \eqref{stationary problem}. Let $x(s)$ and $t(s)$ be variations of
 \[
x(0)=0,\quad t(0)=-1,\quad x'(0)=y_0,\quad t'(0)=h,\quad x''(0)=y_0',\quad t''(0)=h'.
\]
Then for any $\phi\in C_0^\infty(\mathbb{R}^n)$
\begin{equation*}
\begin{aligned}
\left.\frac{\partial^2}{\partial s^2}
\Big(F_{x(s),t(s)}(w+s\phi)\Big)\right|_{s=0}
={}&\;
h\int_{\mathbb{R}^n}(\nabla w\cdot y+2)\phi\rho{\rm d}y
+\int_{\mathbb{R}^n}|\nabla\phi|^2\rho{\rm d}y \\
&-\int_{\mathbb{R}^n}e^{w}\phi^2\rho{\rm d}y
-\int_{\mathbb{R}^n}(\nabla w\cdot y_0)\phi\rho{\rm d}y \\
&-\frac12\int_{\mathbb{R}^n}(\nabla w\cdot y_0)^2\rho{\rm d}y
-h^2\int_{\mathbb{R}^n}\left(1+\frac{y}{2}\cdot\nabla w\right)^2\rho{\rm d}y.
\end{aligned}
\end{equation*}
\end{lemma}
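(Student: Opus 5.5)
The plan is to reduce everything to the fixed Gaussian energy $E$ by a change of variables, and then read the formula off the first and second variations of $E$ at the critical point $w$, using the eigenfunction identities of Lemma \ref{lem eigenvalue problem}.

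\textbf{Step 1: rescaling.} Write $\tau=-t_0>0$ and substitute $y=x_0+\sqrt{\tau}\,z$. Then $G(y-x_0,t_0)\,{\rm d}y=\rho(z)\,{\rm d}z$. Set
\[
v(z)=w(x_0+\sqrt{\tau}z)+\log\tau .
\]
This gives $|\nabla_z v|^2=\tau|\nabla w|^2$, $e^{v}=\tau e^{w}$ and $\int v\rho=\int wG+\log\tau$, so $F_{x_0,t_0}(w)=E(v)$. Applying this along the variation, $F_{x(s),t(s)}(w+s\phi)=E(v_s)$ with
\[
v_s(z)=(w+s\phi)\bigl(x(s)+\sqrt{\tau(s)}\,z\bigr)+\log\tau(s),\qquad \tau(s)=-t(s).
\]
Then
\[
\frac{d^2}{ds^2}E(v_s)\Big|_{s=0}=DE(w)[\ddot v_0]+D^2E(w)[\dot v_0,\dot v_0].
\]

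\textbf{Step 2: the first-variation term vanishes.} We have $DE(w)[\psi]=\int(\nabla w\cdot\nabla\psi-e^w\psi+\psi)\rho$. Integrating by parts against $\rho$ gives $\int(-\Delta w+\tfrac y2\cdot\nabla w-e^w+1)\psi\rho=0$ by \eqref{stationary problem}. Here $\ddot v_0$ is built from $\phi$, its derivatives, first and second derivatives of $w$ times polynomials in $z$, and constants. So we need $w$, $\nabla w$, $\nabla^2 w$ bounded. For a bounded solution this follows from standard interior elliptic estimates on unit balls: the drift term $\tfrac y2\cdot\nabla w$ has size $|y|$, which is handled via the parabolic self-similar picture or by a rescaled Schauder estimate. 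With these bounds the Gaussian weight absorbs all polynomial growth, so the integrations by parts are legitimate.

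\textbf{Step 3: the quadratic term.} Using $\tau(0)=1$, $\tau'(0)=-h$ and $(\sqrt\tau)'(0)=-h/2$, we get
\[
\dot v_0=\phi+y_0\cdot\nabla w-\tfrac h2\,(z\cdot\nabla w+2)=\phi+y_0\cdot\nabla w-\tfrac h2\Lambda(w).
\]
Also $D^2E(w)[\psi,\psi]=\int|\nabla\psi|^2\rho-\int e^w\psi^2\rho=\langle\mathcal L\psi,\psi\rangle_\rho$. Expand this quadratic form and use self-adjointness of $\mathcal L$ together with Lemma \ref{lem eigenvalue problem}, namely $\mathcal L(y_0\cdot\nabla w)=-\tfrac12 y_0\cdot\nabla w$ and $\mathcal L\Lambda=-\Lambda$:
\begin{itemize}
\item the diagonal term in $\phi$ gives $\int|\nabla\phi|^2\rho-\int e^w\phi^2\rho$;
\item the cross term of $\phi$ with $y_0\cdot\nabla w$ gives $2\langle\phi,-\tfrac12 y_0\cdot\nabla w\rangle_\rho=-\int(\nabla w\cdot y_0)\phi\rho$;
\item the cross term of $\phi$ with $-\tfrac h2\Lambda$ gives $h\int\Lambda\phi\rho$;
\item the diagonal term in $y_0\cdot\nabla w$ gives $-\tfrac12\int(\nabla w\cdot y_0)^2\rho$;
\item the diagonal term in $\Lambda$ gives $\tfrac{h^2}{4}\langle\mathcal L\Lambda,\Lambda\rangle_\rho=-h^2\int(1+\tfrac y2\cdot\nabla w)^2\rho$;
\item the cross term between $y_0\cdot\nabla w$ and $\Lambda$ vanishes, since these are eigenfunctions of the self-adjoint $\mathcal L$ with distinct eigenvalues $-\tfrac12$ and $-1$, hence $L^2_\rho$-orthogonal.
\end{itemize}
Summing these contributions reproduces exactly the stated formula. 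Note that $y_0'$ and $h'$ enter only through $\ddot v_0$, which is why they do not appear.

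\textbf{Main obstacle.} The algebra is routine. The delicate point is justifying the self-adjointness and integration-by-parts identities for functions such as $y\cdot\nabla w$, which are not compactly supported and grow linearly. I would first establish uniform bounds on $\nabla w$ and $\nabla^2 w$ for bounded solutions. Then I would verify $\Lambda,\ w_i\in H^1_\rho$ with $\mathcal L$ applied in the classical sense, and pass from cutoffs $\eta_R$ to the limit using Gaussian decay.
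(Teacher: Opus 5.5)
Your argument is correct. The identity $F_{x_0,t_0}(\tilde w)=E\bigl(\tilde w(x_0+\sqrt{-t_0}\,\cdot)+\log(-t_0)\bigr)$, the formula $\dot v_0=\phi+y_0\cdot\nabla w-\frac h2\Lambda(w)$, and the six contributions to $\langle\mathcal L\dot v_0,\dot v_0\rangle_\rho$ all check out with the right signs and sum to the stated expression.

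The paper gives no proof of this lemma; it is quoted from \cite{Ao-G-Y-Y2025}. Formulas of this type are typically derived as in Colding--Minicozzi for self-shrinkers and Wang--Wei--Wu for the Fujita equation. There one keeps $w+s\phi$ fixed and lets the kernel $G(y-x(s),t(s))$ move: compute the first variation at a general $(x_0,t_0)$, differentiate again, and simplify the $h^2$, $|y_0|^2$ and mixed terms with integration-by-parts identities from the equation.

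Your rescaling instead moves all the $(x_0,t_0)$-dependence into the function, and this buys transparency. First, $y_0'$ and $h'$ visibly drop out, because they only enter $\ddot v_0$, where $DE(w)$ vanishes. Second, the whole second variation becomes the single quadratic form $\langle\mathcal L\psi,\psi\rangle_\rho$ with $\psi=\phi+y_0\cdot\nabla w-\frac h2\Lambda(w)$. Lemma \ref{lem eigenvalue problem} then does all the simplification, including the orthogonality that kills the $y_0$--$h$ cross term. Third, the argument works unchanged for non-compactly supported $\phi$ with suitable polynomial bounds, e.g.\ $\phi=bf$. That is exactly how the lemma is used in Step (1) of the proof of Theorem \ref{theorem 1}, where the $C_0^\infty$ version does not literally apply.

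The price is regularity of $w$. Since $\ddot v_0$ contains $\nabla^2 w$, two steps involve $\nabla^3 w$, not only $\nabla w,\nabla^2 w$ as you wrote: differentiating $\frac12\int|\nabla v_s|^2\rho$ twice under the integral, and verifying $DE(w)[\ddot v_0]=0$. This is harmless, because the parabolic rescaling $u(x,t)=-\log(-t)+w(x/\sqrt{-t})$ you mention gives $|\nabla^k w|\le C_k$ on $\mathbb{R}^n$ for every $k$. The moving-kernel computation puts the $s$-derivatives on the Gaussian and needs fewer derivatives of $w$.
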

\begin{lemma}\cite{Ao-G-Y-Y2025}\label{lem entropy attainable}
If $w$ is a bounded solution of \eqref{stationary problem}, then $\lambda(w)$ is attained at $(0,-1)$ and satisfies $\lambda(w)=E(w)$.
\end{lemma}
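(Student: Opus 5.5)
The plan is to realize $F_{x_0,t_0}(w)$ as the value, at a single time, of a Giga--Kohn type monotone quantity for an ancient solution of \eqref{initial problem}. We then let that time go to $-\infty$, where parabolic rescaling makes every base point look like the origin.

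\emph{Step 1 (self-similar solution and reformulation).} Set
\[
u(x,t)=w\big(x/\sqrt{-t}\big)-\log(-t),\qquad (x,t)\in\mathbb{R}^n\times(-\infty,0).
\]
Since $w$ solves \eqref{stationary problem}, $u$ solves $\partial_t u=\Delta u+e^u$ on $\mathbb{R}^n\times(-\infty,0)$. For a base point $(a,T)$ with $T\ge 0$ and a time $t<T$, we introduce the localized energy
\[
\mathcal{E}_{a,T}(t)=(T-t)\!\int\Big(\tfrac12|\nabla u|^2-e^{u}\Big)G(x-a,t-T)\,{\rm d}x+\int u\,G(x-a,t-T)\,{\rm d}x+\log(T-t).
\]
By the change of variables $x=y$ at $t=-1$, $F_{x_0,t_0}(w)$ coincides with $\mathcal{E}_{a,T}(-1)$ for the choice $a=x_0$, $T=-1-t_0$, provided $t_0\in[-1,0)$. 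If $t_0<-1$, we first use the scaling invariance $w\mapsto w(\lambda\cdot)+2\log\lambda$ of \eqref{stationary problem} to evaluate at a different time, $t=-\mu$, which corresponds to the same configuration. Concretely, $F_{x_0,t_0}(w)=\mathcal{E}_{\sqrt\mu x_0,\,\mu(-1-t_0)-\mu}(-\mu)$ after rescaling, and we choose $\mu$ so that the base time is $\ge 0$. The general fact needed is that every pair $(x_0,t_0)$ corresponds to evaluating $\mathcal{E}_{a,T}$ at some time $t$ with $T\ge0$. The case $(x_0,t_0)=(0,-1)$ corresponds to $(a,T)=(0,0)$, where $\mathcal{E}_{0,0}(t)\equiv E(w)$ for all $t$ by self-similarity.

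\emph{Step 2 (monotonicity).} We prove the Giga--Kohn identity
\[
\frac{{\rm d}}{{\rm d}t}\mathcal{E}_{a,T}(t)=-(T-t)\int\Big|\partial_t u+\frac{x-a}{2(T-t)}\cdot\nabla u+\frac{1}{T-t}\Big|^2G\,{\rm d}x\le 0.
\]
This is obtained by differentiating under the integral, using the equation together with $\partial_tG=-\Delta G$ and $\nabla G=\frac{x-a}{2(t-T)}G$, and integrating by parts. The boundary terms vanish because $u(\cdot,t)$ is bounded and, by interior parabolic estimates, its derivatives grow at most polynomially in $x$ for each fixed $t$, while $G$ decays like a Gaussian. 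Consequently $\mathcal{E}_{a,T}(t)\le\lim_{s\to-\infty}\mathcal{E}_{a,T}(s)$ for $t<T$.

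\emph{Step 3 (limit at $-\infty$).} For $s=-\lambda^2$ with $\lambda\to\infty$, we change variables $x=\lambda z$ and use self-similarity. This gives
\[
\mathcal{E}_{a,T}(-\lambda^2)=F_{a/\lambda,\,-1-T/\lambda^2}(w)\big|_{\text{rescaled}},
\]
which is $F$ evaluated with base point tending to $(0,-1)$. Boundedness of $w$ and of $\nabla w$ (from standard elliptic estimates for \eqref{stationary problem}, using that $w$ is bounded) allows dominated convergence, so the limit equals $E(w)$. Hence $F_{x_0,t_0}(w)\le E(w)=F_{0,-1}(w)$ for all $(x_0,t_0)$, and therefore $\lambda(w)=E(w)$ is attained at $(0,-1)$. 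As a consistency check, Lemma \ref{lem critical point} confirms that $(0,-1)$ is a critical point in the parameters.

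\emph{Main obstacle.} I expect the main difficulty to be the bookkeeping in Step 1, namely mapping an arbitrary $(x_0,t_0)$ to a base point with $T\ge0$ via scaling. A second difficulty is justifying the differentiation and integration by parts in Step 2 near $t\to T$ when $T=0$, since $u$ blows up there. To handle this, I would only use the monotonicity on intervals $[s,t]$ with $t<T$ fixed, where $u$ is smooth and bounded, which suffices for the argument.
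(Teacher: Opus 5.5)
Your overall strategy works and is the standard one. The paper only quotes this lemma from Ao et al.; the usual argument behind such statements is the analogue of Colding--Minicozzi's proof that a shrinker's entropy is achieved by $F_{0,1}$. Concretely: write $F_{x_0,t_0}(w)$ as a Giga--Kohn energy of the ancient solution $u=w(x/\sqrt{-t})-\log(-t)$, use monotonicity, and identify the $t\to-\infty$ limit by parabolic rescaling. Your Step~2 justification is adequate. Your Step~3 formula $\mathcal{E}_{a,T}(-\lambda^2)=F_{a/\lambda,\,-1-T/\lambda^2}(w)$ is correct, and together with continuity of $F$ at $(0,-1)$ it gives the lemma.

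You should discard the detour in Step~1: the ``main obstacle'' does not exist, and your way around it is false.

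\begin{itemize}
\item The identity $F_{x_0,t_0}(w)=\mathcal{E}_{x_0,\,-1-t_0}(-1)$ holds for \emph{every} $t_0<0$, not only for $t_0\in[-1,0)$.
\item Monotonicity on $(-\infty,-1]$ needs only that $u$ exists there and that $-1<T=-1-t_0$, which is automatic. Whether the Gaussian is centred before or after the singular time $0$ is irrelevant, since you only use times $t\le -1<T$.
\item Rescaling could never force $T\ge 0$ anyway. The dilation $(x,t)\mapsto(\sqrt{\mu}\,x,\mu t)$ fixes $(0,0)$, so the correct identity is $F_{x_0,t_0}(w)=\mathcal{E}_{\sqrt{\mu}x_0,\,\mu(-1-t_0)}(-\mu)$, not $\mu(-1-t_0)-\mu$. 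The sign of the base time is therefore independent of $\mu$, and your ``general fact'' is false for $t_0\in(-1,0)$.
\end{itemize}

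Two minor corrections:
\begin{itemize}
\item The integrand in the monotonicity identity should be $\bigl|\partial_t u-\frac{x-a}{2(T-t)}\cdot\nabla u-\frac{1}{T-t}\bigr|^2$. As a check, it vanishes identically for your $u$ when $(a,T)=(0,0)$. The sign of $\frac{d}{dt}\mathcal{E}_{a,T}$ is unaffected.
\item Elliptic estimates for the profile equation on unit balls give only polynomial bounds on $\nabla w$, because the drift $\frac{y}{2}\cdot\nabla w$ grows linearly. Global boundedness comes from parabolic estimates for $u$ on $\mathbb{R}^n\times[-2,-1]$. Polynomial growth already suffices for the dominated convergence near $(0,-1)$ in Step~3.
\end{itemize}
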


\section{Entropy instability}
To prove Theorem \ref{theorem 1}, we first prove a regularity result for solutions of \eqref{stationary problem} which decay at infinity.

\begin{lemma}\label{lem nabla and hessian w decay}
    Assume $w$ is a bounded solution of \eqref{stationary problem} and satisfies, for some positive constant $C_{1}$,
\begin{equation}\label{eq:w-decay}
|w(y)|\le C_{1}(1+|y|)^{-1}\quad \text{in }\mathbb{R}^n.
\end{equation}
Then there exists a positive $C$ such that
\begin{equation*}
    |\nabla w|\leq C(1 + |y|)^{-1}, \quad \text{in }  \mathbb{R}^n.
\end{equation*}
\end{lemma}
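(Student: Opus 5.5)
We want $|\nabla w|\le C(1+|y|)^{-1}$ given $|w|\le C_1(1+|y|)^{-1}$. We follow the route announced in the introduction and treat \eqref{stationary problem} as an Ornstein--Uhlenbeck resolvent equation. Write $g:=e^{w}-1$. Since $w$ is bounded, $|g|\le C|w|\le C(1+|y|)^{-1}$. Moreover $w$ solves
\[
-\Delta w+\tfrac12 y\cdot\nabla w = g .
\]
For $\mu>0$ we could invert $(A+\mu)$, where $A=-\Delta+\frac y2\cdot\nabla$. Here $\mu=0$ and the constants lie in the kernel of $A$, so a direct resolvent formula is awkward. Instead, we take the local, scale-adapted approach first: work on balls of radius comparable to $1/|y|$ around a far point. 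There the drift $\frac{y}{2}\cdot\nabla$ has size $|y|$, so the natural scale is $r=|y_0|^{-1}$.

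Fix $|y_0|\ge 2$, set $r=|y_0|^{-1}$, and rescale by $v(z)=w(y_0+rz)$ for $|z|\le 2$. Then
\[
\Delta_z v-\tfrac{r}{2}(y_0+rz)\cdot\nabla_z v = -r^{2}g(y_0+rz).
\]
The drift coefficient $\frac r2(y_0+rz)$ is bounded by $1$ on $B_2$. The right-hand side is bounded by $Cr^2|y_0|^{-1}=Cr^{3}$. Interior $W^{2,p}$ and then $C^{1,\alpha}$ estimates give
\[
|\nabla_z v(0)|\le C\bigl(\|v\|_{L^\infty(B_2)}+r^3\bigr)\le C|y_0|^{-1}.
\]
Hence $|\nabla w(y_0)|=r^{-1}|\nabla_z v(0)|\le C$, which is only boundedness. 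So plain scaling at scale $1/|y|$ loses the decay, and unit-scale estimates give the same $O(|y_0|^{-1})\cdot|y_0|$ issue. This is why the kernel or characteristic structure is needed, and this is the main obstacle.

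The plan is to exploit the drift along rays. Set $W(y)=y\cdot\nabla w = 2\Lambda(w)/2-2$, i.e. $W=\Lambda(w)-2$. The equation reads $\frac12 W=\Delta w+g$. So the radial derivative satisfies $|\partial_r w|=\frac{2}{|y|}|\Delta w+g|$, and it suffices to bound $\Delta w$ by a constant. For the tangential part we use the rotation fields $y_i\partial_j-y_j\partial_i$, which commute with both $\Delta$ and $y\cdot\nabla$. Thus $\Theta_{ij}:=(y_i\partial_j-y_j\partial_i)w$ solves
\[
\Delta\Theta-\tfrac y2\cdot\nabla\Theta+e^{w}\Theta=0 .
\]
Bounding $\Theta_{ij}$ by a constant gives tangential derivatives of size $C/|y|$. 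So both reduce to showing that $\Delta w$ and $\Theta_{ij}$ are bounded.

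For these bounds, apply the unit-scale (not $1/|y|$-scale) Schauder estimate to the original equation. The coefficient $\frac y2$ is unbounded, so instead we use the parabolic viewpoint: $\widetilde w(y,\tau)=w(y)$ is a stationary solution of \eqref{self-similar equation}. Undoing the self-similar change of variables gives $u(x,t)=w(x/\sqrt{-t})-\log(-t)$, a solution of $u_t=\Delta u+e^u$ on $\mathbb R^n\times(-\infty,0)$ with $|u+\log(-t)|\le C$. Standard parabolic estimates for this equation (the Type I regularity from \cite{Ao-G-Y-Y2025}) give
\[
(-t)|\nabla^2u|+(-t)^{1/2}|\nabla u|\le C .
\]
Given $y_0$, choose $t=-|y_0|^{-2}$ and $x=y_0/|y_0|$. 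Take the parabolic cylinder of size $\sqrt{-t}/2$ around $(x,t)$, where $\sqrt{-t}/2\ll|x|=1$. On this cylinder $u+\log(-t)=w(x/\sqrt{-t})$ is $O(|y_0|^{-1})$, because $x/\sqrt{-t}$ stays comparable to $y_0$ in size. Interior parabolic gradient estimates for $v=u+\log(-t)$, which solves $v_t=\Delta v+(e^{v}-1)/(-t)$, on that cylinder give
\[
|\nabla_x v|\le C\sqrt{-t}^{\,-1}\,\|v\|_\infty+C\sqrt{-t}\cdot\frac{|y_0|^{-1}}{-t}\le C .
\]
Converting back with $\nabla_y w=\sqrt{-t}\,\nabla_x u=|y_0|^{-1}\nabla_x u$ yields $|\nabla w(y_0)|\le C|y_0|^{-1}$.

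I expect the hard step to be verifying that the parabolic cylinder maps to a region where the hypothesis \eqref{eq:w-decay} gives smallness $O(|y_0|^{-1})$ uniformly. Concretely, backward times $t'\in[2t,t]$ give $|x'|/\sqrt{-t'}\ge c|y_0|$, so this should hold. If it fails, the fallback is the Mehler-kernel representation of $w$ through the Ornstein--Uhlenbeck semigroup, $w=e^{-sA}w+\int_0^s e^{-\sigma A}g\,{\rm d}\sigma$, differentiated in $y$.
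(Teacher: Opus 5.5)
Your final parabolic argument is correct, and it takes a genuinely different route from the paper's.

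\textbf{The paper's route.} The paper stays in self-similar variables. It avoids the $\mu=0$ difficulty you raise by adding $w$ to both sides, $w-L_{OU}w=e^{w}-1+w$. The Ornstein--Uhlenbeck resolvent at $\mu=1$ then gives $w=\int_0^\infty e^{-t}P_t(e^{w}-1+w)\,{\rm d}t$. The paper differentiates the Mehler kernel and concludes using $1+e^{-t/2}|y|\le(1+|\eta|)(1+|\eta-e^{-t/2}y|)$ and Gaussian moment bounds.

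\textbf{Your route.} You undo the self-similar change of variables. You then apply the standard interior estimate $|\nabla v|\le C\bigl(r^{-1}\|v\|_\infty+r\|h\|_\infty\bigr)$ for $v_t=\Delta v+h$ on the backward cylinder of radius $r=\sqrt{-t}/2$ centred at $(y_0/|y_0|,-|y_0|^{-2})$. In self-similar coordinates the rescaled function on the unit cylinder is $\tilde v(z,s)=w\bigl((y_0+z/2)/\sqrt{1-s/4}\bigr)$. It solves $\tilde v_s=\Delta_z\tilde v+\frac{e^{\tilde v}-1}{4(1-s/4)}$ for $|z|\le1$, $-1\le s\le0$, and only samples $w$ where $|y|\ge c|y_0|$. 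This is exactly the characteristic structure you were looking for: a region elongated along the ray through $y_0$, on which the drift becomes a time derivative.

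The step you flagged as hard is fine, since $|x'|\ge 3/4$ and $-t'\le\frac54|y_0|^{-2}$ on the cylinder. The bookkeeping $r^{-1}\|v\|_\infty\le C$, $r\|h\|_\infty\le C$ and $\nabla w(y_0)=|y_0|^{-1}\nabla_x v$ is also right.

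\textbf{What each route buys.} Yours is local: it uses only the decay \eqref{eq:w-decay} on $\{|y|\ge c|y_0|\}$. It needs no global representation formula, and hence no appeal to uniqueness of bounded solutions of the resolvent equation. Bootstrapping the same interior estimates also gives $|\nabla^k w(y_0)|\le C_k|y_0|^{-1}$ for every $k$ at no extra cost. The paper's route gives an explicit global formula for $\nabla w$, at the price of invoking the Ornstein--Uhlenbeck semigroup theory.

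\textbf{Clean-ups.}
\begin{itemize}
\item The reduction to bounding $\Delta w$ and $\Theta_{ij}$ is circular on its own: by the equation, $\Delta w$ bounded is equivalent to $y\cdot\nabla w$ bounded. It is never used, so drop it.
\item The Type I estimate you quote from Ao et al.\ is likewise never used and can be dropped.
\item Add the one-line case $|y_0|\le 2$: there, interior elliptic estimates on a bounded ball, where the drift is bounded, give $|\nabla w|\le C$.
\end{itemize}
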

\begin{proof}
    Define $F(y):=e^{w(y)}-1+w(y)$, then  we can rewrite \eqref{stationary problem} as
\begin{equation*}
    w-L_{OU}w=F,\quad \text{in }\mathbb{R}^{n},
\end{equation*}
where $L_{OU}$ is the Ornstein-Uhlenbeck operator $\Delta-\frac{y}{2}\cdot\nabla$. By the resolvent formula for the Ornstein-Uhlenbeck semigroup \cite{Metafune-P-P-2002}, $w$ can be represented as
\begin{equation*}
    w(y)=\int_0^\infty e^{-t}\int_{\mathbb{R}^n} p_t(y,\eta)F(\eta){\rm d}\eta{\rm d}t,
\end{equation*}
where $p_{t}(y,\eta)$ is the Ornstein-Uhlenbeck kernel
\begin{equation*}
p_t(y,\eta)
=
\frac{1}{\bigl(4\pi(1-e^{-t})\bigr)^{n/2}}
\exp\!\left(
-\frac{|\eta-e^{-t/2}y|^2}{4(1-e^{-t})}
\right),
\quad t>0.
\end{equation*}
We first claim that $F$ satisfies the same decay property as $w$. Since $w$ is bounded, say $|w|\leq C_1$, define $g(\xi):=e^{\xi}-1+\xi$, then
\begin{equation*}
    g'(\xi)=e^{\xi}+1,\quad 1+e^{-C_1}\leq g'(\xi)\leq 1+e^{C_1},\quad \xi\in[-C_1,C_1].
\end{equation*}
By the mean value theorem,
\begin{equation*}
    |F(y)|=|g(w(y))-g(0)|\leq(1+e^{C_1})|w(y)|,
\end{equation*}
hence,
\begin{equation}\label{f decay}
    |F(y)|\leq C(1+|y|)^{-1},\quad \text{in }\mathbb{R}^{n}.
\end{equation}
 We next obtain the gradient estimate. Differentiating the kernel with respect to $y$, we derive
\begin{equation*}
\nabla_y p_t(y,\eta)=\frac{e^{-t/2}}{2(1-e^{-t})}\,(\eta-e^{-t/2}y)\,p_t(y,\eta).
\end{equation*}
Therefore
\begin{equation*}
\nabla w(y)
=
\int_0^\infty \frac{e^{-3t/2}}{2(1-e^{-t})}
\int_{\mathbb{R}^n} p_t(y,\eta)(\eta-e^{-t/2}y)F(\eta){\rm d}\eta{\rm d}t.
\end{equation*}
Then we have
\begin{equation}\label{abs nabla w}
|\nabla w(y)|\le
\int_0^\infty \frac{e^{-3t/2}}{2(1-e^{-t})}
\int_{\mathbb{R}^{n}} p_t(y,\eta)|\eta-e^{-t/2}y||F(\eta)|{\rm d}\eta{\rm d}t.
\end{equation}
Notice that
\begin{equation*}
    1+e^{-t/2}|y|\le (1+|\eta|)(1+|\eta-e^{-t/2}y|),
\end{equation*}
together with \eqref{f decay}, we have
\begin{equation}\label{f estimate}
|F(\eta)|
\le
C(1+|\eta|)^{-1}
\le
C\frac{1+|\eta-e^{-t/2}y|}{1+e^{-t/2}|y|}.
\end{equation}
A combination of \eqref{abs nabla w} and \eqref{f estimate} leads to
\begin{equation}\label{nabla w estimate 1}
|\nabla w(y)|
\le
C\int_0^\infty \frac{e^{-3t/2}}{1-e^{-t}}
(1+e^{-t/2}|y|)^{-1}
\int_{\mathbb{R}^{n}} p_t(y,\eta)|\eta-e^{-t/2}y|(1+|\eta-e^{-t/2}y|){\rm d}\eta{\rm d}t.
\end{equation}
Next we compute the inner integral $$I_{in}:=\int_{\mathbb{R}^{n}} p_t(y,\eta)|\eta-e^{-t/2}y|(1+|\eta-e^{-t/2}y|){\rm d}\eta.$$ Indeed, setting
\[r=\eta-e^{-t/2}y,\quad s=1-e^{-t},\]
we obtain
\begin{equation*}
    \int_{\mathbb{R}^n}p_t(y,\eta) |\eta - e^{-t/2}y|(1 + |\eta - e^{-t/2}y|) {\rm d}\eta = \int_{\mathbb{R}^n}\frac{1}{(4\pi s)^{n/2}} e^{-\frac{|r|^2}{4s}} |r|(1 + |r|){\rm d}r.
\end{equation*}
We denote $$J_{m}:=\int_{\mathbb{R}^n}\frac{1}{(4\pi s)^{n/2}} e^{-\frac{|r|^2}{4s}} |r|^{m}{\rm d}r.$$  By the scaling $r=\sqrt{2s}z$, we get
\begin{equation*}
    J_{m}\le C\int_{\mathbb{R}^n}e^{-\frac{|z|^{2}}{2}}s^{\frac{m}{2}}|z|^{m}{\rm d}z\leq C_{m}s^{\frac{m}{2}.}
\end{equation*}
Therefore,
\begin{equation*}
    I_{in}\leq C(s^{\frac{1}{2}}+s)\leq Cs^{\frac{1}{2}}=C\sqrt{1-e^{-t}}.
\end{equation*}
Substituting this estimate into \eqref{nabla w estimate 1}, we obtain
\begin{equation*}
|\nabla w(y)|
\le
C\int_0^\infty \frac{e^{-3t/2}}{\sqrt{1-e^{-t}}}
(1+e^{-t/2}|y|)^{-1}{\rm d}t.
\end{equation*}
Using
\begin{equation*}
    (1 + e^{-t/2}|y|)^{-1} \leq e^{\frac{t}{2}}(1 + |y|)^{-1},
\end{equation*}
it follows that
\begin{equation*}
|\nabla w(y)|\leq C(1+|y|)^{-1}\int_0^\infty \frac{e^{-t}}{\sqrt{1-e^{-t}}}
{\rm d}t.
\end{equation*}
The last integral is finite, since the integrand behaves like $t^{-1/2}$ as $t\rightarrow0^{+}$, and decays exponentially as $t\rightarrow+\infty$. Consequently,
\begin{equation*}
    |\nabla w(y)|\le C(1+|y|)^{-1},\quad \text{in }\mathbb{R}^{n}.
\end{equation*}
\end{proof}

To prove Theorem \ref{theorem 1}, we also need a result concerning the first eigenfunction of the operator
\begin{equation*}
\mathcal{L}\psi=-\Delta \psi + \frac{y}{2}\cdot \nabla \psi - e^{w}\psi.
\end{equation*}
\begin{lemma}\label{lem eigenfunction decay}
    Assume $w$ is a bounded non-constant solution of \eqref{stationary problem} satisfying \eqref{eq:w-decay}. Let $\lambda_{1}$ be the first eigenvalue of the eigenvalue problem
\begin{equation}\label{eq:eigenvalue p}
-\Delta \psi + \frac{y}{2}\cdot \nabla \psi - e^{w}\psi=\lambda\psi,\quad\text{in }\mathbb{R}^{n}.
\end{equation}
If $f$ is a positive eigenfunction associated to $\lambda_{1}$, then there exists a positive constant $C$ such that
\begin{equation*}
    (1 + |y|)^{\beta}|f| + (1 + |y|)^{\beta+1} |\nabla f| \leq C \quad \text{in }  \mathbb{R}^n,
\end{equation*}
where $0<\beta<-2(1+\lambda_{1})$.
\end{lemma}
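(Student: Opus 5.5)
The plan is a barrier and maximum-principle argument on exterior domains. The decay of $f$ is governed by the spectral gap $-(1+\lambda_1)$, and the decay of $\nabla f$ follows by differentiating the eigenvalue equation.

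\textbf{Step 0: the range of $\beta$ is nonempty.} By Lemma \ref{lem eigenvalue problem}, $\Lambda(w)=2+y\cdot\nabla w$ satisfies $\mathcal L\Lambda=-\Lambda$. Lemma \ref{lem nabla and hessian w decay} gives $|y\cdot\nabla w|\le C$, so $\Lambda$ is bounded and lies in $H^1_\rho$. Hence $-1$ is an eigenvalue and $\lambda_1\le -1$. Since $w$ is non-constant, Lemma \ref{lem not change sign} says $\Lambda$ changes sign. By Corollary \ref{cor eigenvalue problem}(4), $\Lambda$ is therefore not a first eigenfunction, so $\lambda_1<-1$. Thus $-2(1+\lambda_1)>0$, and we fix $\beta$ in the stated range. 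Set $\delta:=-\beta/2-1-\lambda_1>0$.

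\textbf{Step 1: decay of $f$.} For $|y|\ge 1$ let $\varphi=|y|^{-\beta}$. A direct computation gives
\[
-\Delta\varphi+\tfrac y2\cdot\nabla\varphi=-\tfrac\beta2\varphi+O(|y|^{-\beta-2}).
\]
Also $e^w=1+O(|y|^{-1})$ by \eqref{eq:w-decay}. Hence
\[
(\mathcal L-\lambda_1)\varphi=\bigl(\delta+O(|y|^{-1})\bigr)\varphi>0\quad\text{for }|y|\ge R_0,
\]
so $\varphi$ is a strict supersolution of $\mathcal L-\lambda_1$ outside a large ball.

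To run the maximum principle on the unbounded domain $\{|y|>R_0\}$, we need an a priori growth bound on $f$ together with a large supersolution that dominates it. For $\Phi_a=e^{a|y|^2}$,
\[
(\mathcal L-\lambda_1)\Phi_a=\bigl((a-4a^2)|y|^2-2an-e^w-\lambda_1\bigr)\Phi_a,
\]
which is positive for $|y|$ large when $0<a<1/4$. From $f\in L^2_\rho$ and local elliptic estimates on unit balls, $f(y)\le C(1+|y|)^{k}e^{|y|^2/8}$, so $f=o(\Phi_a)$ for any fixed $a\in(1/8,1/4)$. We then compare $f$ with $C\varphi+\varepsilon\Phi_a$ on $\{|y|>R_0\}$, choosing $C$ so the inequality holds on $|y|=R_0$. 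The maximum principle applies because both terms are supersolutions, and letting $\varepsilon\to0$ gives $0<f\le C|y|^{-\beta}$. Boundedness of $f$ on $B_{R_0}$ completes the bound for $f$.

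\textbf{Step 2: decay of $\nabla f$.} Differentiating \eqref{eq:eigenvalue p} gives
\[
(\mathcal L-\lambda_1+\tfrac12)f_i=e^w w_i f.
\]
By Lemma \ref{lem nabla and hessian w decay} and Step 1, the right-hand side is $O(|y|^{-1-\beta})$. For $\varphi_1=|y|^{-\beta-1}$, the same computation as in Step 1 yields
\[
(\mathcal L-\lambda_1+\tfrac12)\varphi_1=\bigl(-\tfrac{\beta+1}2+\tfrac12-1-\lambda_1+o(1)\bigr)\varphi_1=(\delta+o(1))\varphi_1.
\]
So $C\varphi_1$ is a supersolution that absorbs the source once $C$ is large. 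Comparing $\pm f_i$ with $C\varphi_1+\varepsilon\Phi_a$ gives $|\nabla f|\le C|y|^{-\beta-1}$ outside a ball. This uses the a priori Gaussian-type growth bound on $\nabla f$, which comes from the same local estimates as in Step 1. Local regularity handles the bounded region.

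\textbf{Main obstacle.} The delicate point is justifying the maximum principle on exterior domains. This requires establishing the a priori bound $|f|+|\nabla f|\le Ce^{a|y|^2}$ with some $a<1/4$ from $f\in H^1_\rho$. I would obtain it from interior $L^\infty$ and gradient estimates on unit balls $B_1(y_0)$, where the drift term has size $|y_0|$. Those estimates cost only polynomial factors in $|y_0|$, or one can rescale to balls of radius $|y_0|^{-1}$. Combined with $\int_{B_1(y_0)}f^2\le Ce^{|y_0|^2/4+C|y_0|}$, this gives growth like $e^{|y|^2/8+C|y|}$, which is enough.
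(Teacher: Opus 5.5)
Your proof is correct. It uses the same barriers $|y|^{-\beta}$ and $|y|^{-\beta-1}$ as the paper, with the same computation showing that $\beta<-2(1+\lambda_1)$ is exactly what makes them supersolutions. The difference lies in how you justify the comparison on the unbounded exterior domain.

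The paper proceeds in three steps:
\begin{itemize}
\item It solves Dirichlet problems on the annuli $B_{R+k}\setminus B_R$, with data $f$ on $\partial B_R$ and $M|y|^{-\beta}$ on $\partial B_{R+k}$.
\item It bounds these solutions by $M|y|^{-\beta}$ via the maximum principle and extracts a limit $g_\infty$ on $\mathbb{R}^n\setminus B_R$.
\item It identifies $g_\infty=f$ by a weighted energy argument: testing the equation for $h=f-g_\infty$ against $h\phi_k^2\rho$ and using $-e^w-\lambda_1>0$ gives $\int|\nabla h|^2\rho=0$.
\end{itemize}
It then repeats the whole scheme for $\pm f_i$ with the source term.

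You instead run a Phragm\'en--Lindel\"of argument. Your auxiliary supersolution $\varepsilon e^{a|y|^2}$, $a\in(1/8,1/4)$, dominates the a priori growth $e^{|y|^2/8+C|y|}$ that follows from $f\in L^2_\rho$ and rescaled interior estimates. This exploits the gap between the $L^2_\rho$ growth rate and the critical rate $e^{|y|^2/4}$ of the Ornstein--Uhlenbeck operator.

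Your route avoids the auxiliary existence theory, the compactness step and the uniqueness identification. It also treats $\nabla f$ with exactly the same ingredients as $f$. By contrast, the paper's ``likewise'' for $f_i$ tacitly needs $f_i\in H^1_\rho$, i.e.\ a weighted second-derivative bound on $f$, to run the energy uniqueness argument. The paper's route has its own advantage: it stays entirely within the $H^1_\rho$ framework and needs no quantitative pointwise growth bound.

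Your Step 0 justification of $\lambda_1<-1$ (checking $\Lambda\in H^1_\rho$ via the decay of $\nabla w$) is also more complete than the paper's citation.

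One small point you should state explicitly. Your maximum principle on $\{|y|>R_0\}$ uses that the zero-order coefficients $-e^w-\lambda_1$ and $-e^w-\lambda_1+\frac12$ are nonnegative there. Strict supersolutions alone are not enough. This holds for $R_0$ large, since $e^w\to1$ and $\lambda_1<-1$.
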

\begin{proof}
    Let $\lambda_{1}$ be the smallest eigenvalue of \eqref{eq:eigenvalue p} and let $f$ be a positive eigenfunction associated to $\lambda_{1}$. Then $f$ satisfies
\begin{equation}\label{eq:eigenvalue f}
-\Delta f + \frac{y}{2}\cdot \nabla f - e^{w}f=\lambda_{1}f,\quad\text{in }\mathbb{R}^{n}.
\end{equation}
Since $w$ is a bounded non-constant solution of \eqref{stationary problem}, by Lemma \ref{lem not change sign} and Corollary \ref{cor eigenvalue problem}, $\lambda_{1}<-1$. Taking $\Phi(y)=|y|^{-\beta}$,  we have
\begin{equation}\label{eq:barrier}
    \mathcal{L}\Phi+\lambda_{1}\Phi=\left[-\beta(\beta+2-n)|y|^{-2}-\frac{\beta}{2}-e^{w}-\lambda_{1}\right]\Phi.
\end{equation}
Hence, \eqref{eq:w-decay} implies there exists a positive constant $R$ such that
\begin{equation}\label{eq:outer positive}
    -\beta(\beta+2-n)|y|^{-2}-\frac{\beta}{2}-e^{w}-\lambda_{1}>0,\quad \text{in }\mathbb{R}^{n}\setminus B_{R}.
\end{equation}
By taking $R$ large enough, we may also assume that
\begin{equation}\label{eq:zero term positive}
    -e^{w}-\lambda_{1}>0,\quad \text{for }|y|\ge R.
\end{equation}
By standard elliptic regularity theory, there exists a positive constant $M$ such that
\begin{equation}\label{eq:f interior}
    |f|\leq MR^{-\beta},\quad \text{in }B_{R}.
\end{equation}
Now for any $k\geq1$, we consider the Dirichlet problem:
\begin{equation}\label{eq:barrier equation}
\left\{
\begin{aligned}
&-\Delta g_k + \frac{y}{2} \cdot \nabla g_k - e^{w}g_k - \lambda_1 g_k = 0,
&& \text{in } B_{R+k} \setminus B_R, \\
&g_k = f,
&& \text{on } \partial B_R, \\
&g_k = M |y|^{-\beta},
&& \text{on } \partial B_{R+k}.
\end{aligned}
\right.
\end{equation}
Since we have assumed that \eqref{eq:zero term positive} holds, the zero order term of the second order elliptic equation in \eqref{eq:barrier equation} is positive. Using
\eqref{eq:barrier}, it follows from \eqref{eq:outer positive}, \cite[Theorem 8.3]{Gilbarg-T2001} and the maximum principle that for any $k\ge1$, \eqref{eq:barrier equation} has a unique smooth solution $g_{k}$, which is bounded above by $M|y|^{-\beta}$. By using the maximum principle again, $g_{k}$ is bounded below by 0. Letting $k\rightarrow\infty$, we get from the Arzel$\acute{\text{a}}$-Ascoli theorem that $\{g_{k}\}$ converges to some function $g_{\infty}$ in $C_{loc}^{2}\left(\mathbb{R}^{n}\setminus B_{R}\right)$, where
\begin{equation}\label{equation g wuqiong}
\left\{
\begin{aligned}
&-\Delta g_{\infty} + \frac{y}{2} \cdot \nabla g_{\infty} - e^{w}g_{\infty} - \lambda_1 g_{\infty} = 0,
&& \text{in } \mathbb{R}^n \setminus B_R, \\
&g_{\infty} = f,
&& \text{on } \partial B_R.
\end{aligned}
\right.
\end{equation}
Furthermore,
\begin{equation*}
|g_{\infty}| \leq M |y|^{-\beta}, \quad \text{in } \mathbb{R}^n \setminus B_R.
\end{equation*}
Multiplying both sides of \eqref{equation g wuqiong} by $g_{\infty}\rho$ over $\mathbb{R}^n \setminus B_R(0)$, we obtain
\begin{equation*}
\int_{\mathbb{R}^n \setminus B_R(0)} |\nabla g_{\infty}|^2 \rho {\rm d}y < \infty.
\end{equation*}

We claim that $g_{\infty}=f$.

Indeed, we denote by $h:=f-g_{\infty}$, then $h$ satisfies
\begin{equation}\label{equation h}
\left\{
\begin{aligned}
&-\Delta h + \frac{y}{2} \cdot \nabla h - e^{w} h -\lambda_1 h = 0,
&& \text{in } \mathbb{R}^n \setminus B_R, \\
&h = 0,
&& \text{on } \partial B_R.
\end{aligned}
\right.
\end{equation}
Let $\phi_{k}$ be a smooth cutoff function such that $\phi_{k}=1$ in $B_{k}$, $\phi_{k}=0$ outside $B_{2k}$  and $|\nabla\phi_{k}|\leq C/k$. Multiplying both sides of \eqref{equation h} by $h\phi_{k}^{2}\rho$ and integrating by parts, we derive
\begin{align*}
0 =&  \int_{\mathbb{R}^n \setminus B_R} |\nabla h|^2 \phi_k^2 \rho {\rm d}y
- \int_{\mathbb{R}^n \setminus B_R} e^{w}h^2 \phi_k^2 \rho {\rm d}y \\
&- \lambda_1 \int_{\mathbb{R}^n \setminus B_R} h^2 \phi_k^2 \rho {\rm d}y
+ 2 \int_{\mathbb{R}^n \setminus B_R} h \phi_k \nabla h \cdot \nabla \phi_k  \rho {\rm d}y.
\end{align*}
Recall that \eqref{eq:zero term positive}, then we have
\begin{equation*}
\int_{\mathbb{R}^n \setminus B_R} |\nabla h|^2 \phi_k^2 \rho {\rm d}y
\leq -2 \int_{\mathbb{R}^n \setminus B_R} h \phi_k \nabla h \cdot \nabla \phi_k  \rho {\rm d}y.
\end{equation*}
Letting $k\rightarrow\infty$, we deduce that
\begin{equation*}
\int_{\mathbb{R}^n \setminus B_R} |\nabla h|^2 \rho {\rm }dy = 0.
\end{equation*}
Hence $h=0$ and the claim is true.

We next prove the decay estimate for $\nabla f$. For each $i=1,\dots,n$, let
\[
f_i:=\partial_i f.
\]
Differentiating \eqref{eq:eigenvalue f} with respect to $y_i$, we obtain
\[
-\Delta f_i+\frac{y}{2}\cdot \nabla f_i-e^w f_i-\Bigl(\lambda_1-\frac12\Bigr)f_i
= e^w (\partial_i w) f,
\quad\text{in }\mathbb R^n.
\]
Since we have already proved
\[
|f(y)|\le C(1+|y|)^{-\beta},
\]
and by Lemma \ref{lem nabla and hessian w decay},
\[
|\nabla w(y)|\le C(1+|y|)^{-1},
\]
it follows that
\[
|e^w (\partial_i w)f|
\le C(1+|y|)^{-1}(1+|y|)^{-\beta}
\le C(1+|y|)^{-\beta-1}
\quad\text{in }\mathbb R^n.
\]

Now let $\Psi(y):=|y|^{-\beta-1}$,  we define
\[
\mathcal L_1
:=
-\Delta+\frac{y}{2}\cdot \nabla-e^w-\Bigl(\lambda_1-\frac12\Bigr),
\]
then
\[
\mathcal L_1 \Psi
=
\left[
-(\beta+1)(\beta+3-n)|y|^{-2}
-\frac{\beta}{2}
-e^w
-\lambda_1
\right]\Psi.
\]

Hence, \eqref{eq:w-decay} implies there exists a positive constant $R$ such that
\begin{equation*}
    -(\beta+1)(\beta+3-n)|y|^{-2}-\frac{\beta}{2}-e^{w}-\lambda_{1}>0,\quad \text{in }\mathbb{R}^{n}\setminus B_{R}.
\end{equation*}
By taking $R$ large enough, we may also assume that
\begin{equation}\label{eq:zero term positive1}
    -e^{w}-\lambda_{1}+\frac{1}{2}>0,\quad \text{for }|y|\ge R.
\end{equation}
By standard elliptic regularity theory, there exists a positive constant $M$ such that
\begin{equation}\label{eq:nabla f interior}
    |f_{i}|\leq MR^{-\beta-1},\quad \text{in }B_{R}.
\end{equation}
Moreover, by the definition of $\Psi(y)$, it is easy to verify
\begin{equation*}
    |e^w (\partial_i w)f|\leq C\Psi(y),\quad\text{in } \mathbb{R}^{n}\setminus B_{R}.
\end{equation*}
Therefore
\begin{equation}\label{eq:L1 Psi bigger than source}
    -\Delta \Psi + \frac{y}{2}\cdot \nabla \Psi - e^{w}\Psi - \left(\lambda_{1} - \frac{1}{2}\right)\Psi
\ge C\lvert e^{w} w_i f \rvert
\quad \text{in } \mathbb{R}^n \setminus B_R.
\end{equation}

We next prove $f_{i}\leq M\Psi(y)$. For any $k\geq1$ and each $i=1,\dots,n$, we consider the following Dirichlet problem:
\begin{equation}\label{eq:nabla f barrier equation for plus}
\left\{
\begin{aligned}
&-\Delta g_{i,k,+} + \frac{y}{2} \cdot \nabla g_{i,k,+} - e^{w}g_{i,k,+} - \left(\lambda_1-\frac{1}{2}\right) g_{i,k,+} = e^{w}w_{i}f,
&& \text{in } B_{R+k} \setminus B_R, \\
&g_{i,k,+} = f_i,
&& \text{on } \partial B_R, \\
&g_{i,k,+}= M |y|^{-\beta-1},
&& \text{on } \partial B_{R+k}.
\end{aligned}
\right.
\end{equation}
Using \eqref{eq:zero term positive1} and \cite[Theorem 8.3]{Gilbarg-T2001} again, we know \eqref{eq:nabla f barrier equation for plus} has a unique smooth solution $g_{i,k,+}$. Now define $l_{i,k,+}:=M\Psi(y)-g_{i,k,+}$. By taking $C\ge1$ in \eqref{eq:L1 Psi bigger than source} together with \eqref{eq:nabla f barrier equation for plus}, we obtain
\begin{equation*}
    \mathcal{L}_{1}l_{i,k,+}\ge C\lvert e^{w} w_i f \rvert -\lvert e^{w} w_i f \rvert\geq0.
\end{equation*}
On $\partial B_{R}$, \eqref{eq:nabla f interior} implies $l_{i,k,+}\geq0$, and on $\partial B_{R+k}$, $l_{i,k,+}=0$. Hence the maximum principle yields that
\begin{equation*}
    g_{i,k,+}\leq M\Psi(y),\quad\text{in } B_{R+k}\setminus B_{R}.
\end{equation*}
Similar to the above analysis, we have
\[
\left\{
\begin{aligned}
&-\Delta g_{i,\infty,+} + \frac{y}{2}\cdot \nabla g_{i,\infty,+} - e^{w}g_
{i,\infty,+}
- \left(\lambda_1 - \frac{1}{2}\right)g_{i,\infty}^{+}
= e^{w} w_i f
&&\text{in } \mathbb{R}^n \setminus B_R,\\
&\;\;g_{i,\infty,+} = f_i
&& \text{on } \partial B_R.
\end{aligned}
\right.
\]
Moreover, we have
\begin{equation*}
    g_{i,\infty,+} \le M\Psi(y) \qquad \text{in } \mathbb{R}^n \setminus B_R.
\end{equation*}
Likewise, one has
\begin{equation}\label{plus bounded}
     f_{i} \le M\Psi(y) \quad \text{in } \mathbb{R}^n \setminus B_R.
\end{equation}

To prove $-f_{i}\leq M\Psi(y)$, we consider the following problem :
\begin{equation*}
\left\{
\begin{aligned}
&-\Delta g_{i,k,-} + \frac{y}{2} \cdot \nabla g_{i,k,-} - e^{w}g_{i,k,-} - \left(\lambda_1-\frac{1}{2}\right) g_{i,k,-} = -e^{w}w_{i}f,
&& \text{in } B_{R+k} \setminus B_R, \\
&g_{i,k,-} = -f_i,
&& \text{on } \partial B_R, \\
&g_{i,k,-} = M |y|^{-\beta-1},
&& \text{on } \partial B_{R+k}.
\end{aligned}
\right.
\end{equation*}
By defining $l_{i,k,-}:=M\Psi(y)-g_{i,k,-}$ and using the similar analysis arguments,  we have for each $i=1,\dots,n$,
\begin{equation}\label{minus bounded}
    -f_{i} \le M\Psi(y) \quad \text{in } \mathbb{R}^n \setminus B_R.
\end{equation}
Combine  \eqref{eq:f interior}, \eqref{plus bounded} and \eqref{minus bounded} together, we conclude that
\begin{equation*}
    |\nabla f|\leq C(1+|y|)^{-\beta-1},\quad \text{in }\mathbb{R}^{n}.
\end{equation*}

\end{proof}

Next, we shall prove Theorem \ref{theorem 1}.
\begin{proof}[Proof of Theorem \ref{theorem 1}]
Let us take a one-parameter variation $w_{s}=w+sf$ for $s\in[-2\epsilon,2\epsilon], $ where $f$ is the first eigenfunction of the operator
\begin{equation*}
\mathcal{L}\psi=-\Delta \psi + \frac{y}{2}\cdot \nabla \psi - e^{w}\psi.
\end{equation*}
Without loss of generality, we may assume $f>0$ in $\mathbb{R}^{n}$. By the proof of Lemma \ref{lem F-stable}, we know that for any $x(s)$ and $t(s)$ with $x(0)=0$ and $t(0)=-1$,
\begin{equation*}
    \left.\frac{\partial^2}{\partial s^2}
\Big(F_{x(s),t(s)}(w_{s})\Big)\right|_{s=0}<0.
\end{equation*}
We will use this to prove that $w$ is also entropy unstable.

\noindent\textbf{Setting up the proof:} Define a function $G:\mathbb{R}^{n}\times\mathbb{R}^{-}\times[-2\varepsilon,2\varepsilon]$ by
\[
G(x_0,t_0,s):=F_{x_0,t_0}(w_s).
\]
We will show that there exists some $\epsilon_{1}>0$ so that if $s\neq0$ and $|s|\leq\epsilon_{1}$, then
\begin{equation}\label{lambda ws}
\lambda(w_s)= \sup_{x_0 \in \mathbb{R}^n,\; t_0 \in (-\infty,0)}G(x_0,t_0,s)
< G(0,-1,0)= \lambda(w).
\end{equation}
This will give the theorem with $w_{s}$ for any $s\neq0$ in $(-\epsilon_{1},\epsilon_{1})$.

The remainder of the proof is devoted to establishing \eqref{lambda ws}. The key point will be:
\begin{enumerate}
\item $G$ has a strict local maximum at $(0,-1,0)$.
\item $G(x_0,t_0,0)$ has a strict global maximum at $(0,-1)$.
\item $|\partial_s G|$ is bounded on compact sets of $\mathbb{R}^{n}\times\mathbb{R}^{-}\times[-2\varepsilon,2\varepsilon]$.
\item $G(x_0,t_0,s)$ is strictly less than $G(0,-1,0)$ whenever $|x_0|$ is sufficiently large.
\item $G(x_0,t_0,s)$ is strictly less than $G(0,-1,0)$ whenever $|\log (-t_0)|$ is sufficiently large.
\end{enumerate}
\noindent\textbf{The proof of \eqref{lambda ws} assuming (1)-(5):} Depending on the size of $|x_0|^2+|\log(-t_0)|^2$, we divide into three separate regions:

First, it follows from steps (4) and (5) that there is some $R>0$ so that \eqref{lambda ws} holds for every $s$ whenever
\begin{equation*}
    |x_0|^2+|\log(-t_0)|^2>R^{2}.
\end{equation*}

Second, as long as $s$ is small, step (1) implies \eqref{lambda ws} holds when $|x_0|^2+|\log(-t_0)|^2$ is sufficiently small.

Finally, in the intermediate region where $|x_0|^2+|\log(-t_0)|^2$ is bounded from above and bounded uniformly away from zero, step (2) says that $G$ is strictly less than $\lambda(w)$ as $s=0$ and step (3) implies the derivative of $G$ with respect to $s$ is uniformly bounded. Hence, there exists  $\epsilon_{3}>0$ so that $G(x_{0},t_{0},s)$ is strictly less than $\lambda(w)$ whenever $(x_{0},t_{0})$ is in the intermediate region as long as $|s|\leq\epsilon_{3}$.

This completes the proof of \eqref{lambda ws} under assuming (1)-(5).

\noindent\textbf{The verify  of (1)-(5):}

\noindent\textbf{Step (1):}
Since $w$ solves \eqref{stationary problem}, it follows from Lemma \ref{lem critical point} that $\nabla G$ vanishes at $(0,-1,0)$. We claim that the Hessian of $G$ at $(0,-1,0)$ is negative definite. Equivalently, for every nonzero direction $(y_0,-a,b)\in \mathbb R^n\times \mathbb R\times \mathbb R$, one has
\[
\left.\frac{d^2}{d\sigma^2}G\bigl(\sigma y_0,-(1+a\sigma),b\sigma\bigr)\right|_{\sigma=0}<0.
\]

To see this, we define
\[
\gamma(\sigma):=G\bigl(\sigma y_0,-(1+a\sigma),b\sigma\bigr)
=F_{x(\sigma),t(\sigma)}(w+\sigma \phi),
\]
where
\[
x(\sigma)=\sigma y_0,\quad t(\sigma)=-(1+a\sigma),\quad \phi=bf.
\]
Applying Lemma \ref{lem second varaition} with $x'(0)=y_0$, $t'(0)=-a$, and $\phi=bf$, we obtain
\begin{equation}\label{eq:gamma''}
    \begin{split}
        \gamma''(0)
&=-a\int_{\mathbb R^n} (\nabla w\cdot y+2)bf\rho{\rm d}y
+\int_{\mathbb R^n} |\nabla(bf)|^2\rho{\rm d}y
-\int_{\mathbb R^n} e^w (bf)^2\rho{\rm d}y \\
&\quad
-\int_{\mathbb R^n} (\nabla w\cdot y_0)bf\rho{\rm d}y
-\frac12\int_{\mathbb R^n} (\nabla w\cdot y_0)^2\rho{\rm d}y
-a^2\int_{\mathbb R^n}\left(1+\frac y2\cdot \nabla w\right)^2\rho{\rm d}y.
    \end{split}
\end{equation}

Since $f$ is the first eigenfunction of $\mathcal{L}$, we have
\[
\mathcal{L}f=\lambda_1 f.
\]
On the other hand, by Lemma \ref{lem eigenvalue problem},
\[
\mathcal{L}w_i=-\frac12 w_i,\quad \mathcal{L}(2+y\cdot \nabla w)=-(2+y\cdot \nabla w).
\]
Moreover, since $w$ is non-constant, Lemma \ref{lem not change sign} implies that $2+y\cdot \nabla w$ changes sign. Hence $-1$ cannot be the first eigenvalue, and therefore $\lambda_1<-1$. By the orthogonality of eigenfunctions corresponding to different eigenvalues, we get
\begin{equation}\label{eq: orthogonality 1}
    \int_{\mathbb R^n} (2+y\cdot \nabla w)f\rho{\rm d}y=0,
\end{equation}
and
\begin{equation}\label{eq: orthogonality 2}
\int_{\mathbb R^n} (\nabla w\cdot y_0)f\rho{\rm d}y
=0.
\end{equation}
Substituting \eqref{eq: orthogonality 1} and \eqref{eq: orthogonality 2} into \eqref{eq:gamma''}, we arrive at
\[
\gamma''(0)
=
\lambda_1 b^2\int_{\mathbb R^n} f^2\rho\,dy
-\frac12\int_{\mathbb R^n} (\nabla w\cdot y_0)^2\rho\,dy
-a^2\int_{\mathbb R^n}\left(1+\frac y2\cdot \nabla w\right)^2\rho\,dy.
\]

Note that  each term on the right-hand side of the above formula is non-positive. In fact,  each term is strictly negative whenever the corresponding parameter is nonzero:

\begin{itemize}
\item If $b\neq 0$, then since $\lambda_1< -1$ and $f>0$,
\[
\lambda_1 b^2\int_{\mathbb R^n} f^2\rho{\rm d}y<0.
\]

\item If $a\neq 0$, then
\[
-a^2\int_{\mathbb R^n}\left(1+\frac y2\cdot \nabla w\right)^2\rho{\rm d}y<0,
\]
because
\[
1+\frac y2\cdot \nabla w=\frac12(2+y\cdot \nabla w), \hbox{ and } 2+y\cdot \nabla w\not\equiv 0.
\]

\item If $y_0\neq 0$, then by Remark \ref{rem 1},
\[
-\frac12\int_{\mathbb R^n} (\nabla w\cdot y_0)^2\rho\,dy<0.
\]

\end{itemize}

Hence, for every nonzero $(y_0,-a,b)$, at least one of the three terms is strictly negative, while the others are non-positive. Therefore,
$
\gamma''(0)<0,
$
which implies that the Hessian of $G$ at $(0,-1,0)$ is negative definite. It follows that $G$ has a strict local maximum at $(0,-1,0)$, so there exists $\epsilon_{2}\in(0,\epsilon)$ so that
\begin{equation*}
G(x_0,t_0,s)<G(0,-1,0),\quad \text{if }0<|x_0|^{2}+|\log(-t_0)|^{2}+|s|^{2}\leq \epsilon_{2}^{2}.
\end{equation*}

\noindent\textbf{Step (2):} In this step, we will show that there exists a positive constant $\delta>0$ such that
\begin{equation}\label{eq:gap-s0}
G(x_{0},t_{0},0)<G(0,-1,0)-\delta,
\end{equation}
for all $x_{0},t_{0}$ with $\frac{\epsilon_{2}^{2}}{4}<|x_{0}|^{2}+|\log(-t_{0})|^{2}$.

Indeed, we get from the definition of $G(x_{0},t_{0},s)$ and Lemma \ref{lem entropy attainable} that  $G(x_{0},t_{0},0)=F_{x_{0},t_{0}}(w)$ and $G(0,-1,0)=F_{0,-1}(w)=\lambda(w)$. Thus \eqref{eq:gap-s0} is a direct consequence of Lemma \ref{lem F-stable}.

\medskip
\noindent\textbf{Step (3):} $|\partial_{s}G|$ is bounded on compact sets of $\mathbb{R}^{n}\times\mathbb{R}^{-}\times[-2\varepsilon,2\varepsilon]$.

By the definition of $G$ and the first variation formula, we have
\begin{equation*}
\begin{split}
\partial_s G(x_0,t_0,s)=&(-t_0)\int_{\mathbb{R}^n}\nabla(w+sf)\cdot\nabla fG(y-x_0,t_0){\rm d}y\\
&\;-(-t_0)\int_{\mathbb{R}^n}e^{w+sf}fG(y-x_0,t_0){\rm d}y
+\int_{\mathbb{R}^n}fG(y-x_0,t_0){\rm d}y.
\end{split}
\end{equation*}
It is clear that $\partial_{s}G$ is continuous in all three variables $x_{0}$, $t_{0}$ and s. We deduce that $\partial_{s}G$ is bounded on compact subsets.

\medskip
\noindent\textbf{Step (4):} We fix a positive constant $M$ such that $M\gg1$. For any $x_{0}$ with $|x_{0}|\ge M$, we will show
\begin{equation}\label{Gs<G0}
    G(x_0,t_0,s)<G(0,-1,0),
\end{equation}
namely
\begin{equation}\label{Gs-G0<0 expand}
\begin{split}
& \frac{1}{2}(-t_0)\int_{\mathbb{R}^n} |\nabla (w+sf)|^2 G(y-x_0,t_0){\rm d}y
- (-t_0)\int_{\mathbb{R}^n} e^{w+sf} G(y-x_0,t_0){\rm d}y  \\
&\quad + \int_{\mathbb{R}^n} (w+sf) G(y-x_0,t_0){\rm d}y
+ \log(-t_0)- \frac12\int_{\mathbb{R}^n}|\nabla w|^2\rho{\rm d}y
+\int_{\mathbb{R}^n}e^w\rho{\rm d}y\\
&\quad-\int_{\mathbb{R}^n}w\rho{\rm d}y<0.
\end{split}
\end{equation}
By step (2), in order to prove \eqref{Gs<G0}, it suffices to show that
\begin{equation*}
     G(x_0,t_0,s)\leq G(x_0,t_0,0)+C|s|,
\end{equation*}
namely
\begin{equation}\label{linear control equi}
\begin{split}
&s(-t_0)\int_{\mathbb{R}^n} \nabla w\cdot\nabla f G(y-x_0,t_0){\rm d}y+\frac{s^{2}}{2}(-t_0)\int_{\mathbb{R}^n} |\nabla f|^2 G(y-x_0,t_0){\rm d}y\\
 &\quad-(-t_0)\int_{\mathbb{R}^n} (e^{w+sf} -e^{w})G(y-x_0,t_0){\rm d}y
+s\int_{\mathbb{R}^n} fG(y-x_0,t_0){\rm d}y\leq C|s|.
\end{split}
\end{equation}

Suppose that $\lvert-t_{0}\lvert$ is bounded above and bounded away from zero. By Lemmas \ref{lem nabla and hessian w decay} and \ref{lem eigenfunction decay} together with the integrability of Gaussian kernel $G$, we know that \eqref{linear control equi} holds.

We now turn to the case where $\lvert -t_{0}\lvert$ is sufficiently large. We first estimate the gradient term in \eqref{Gs-G0<0 expand}. Expanding the square, we have
\begin{equation}\label{eq:gradient term in Gs}
    \begin{split}
     &\frac{1}{2}(-t_0) \int_{\mathbb{R}^n} |\nabla(w+sf)|^2 G(y-x_0, t_0) \mathrm{d}y \\
    ={}& \frac{1}{2}(-t_0) \int_{\mathbb{R}^n} |\nabla w|^2 G(y-x_0, t_0) \mathrm{d}y \\
    &+ s(-t_0) \int_{\mathbb{R}^n} \nabla w \cdot \nabla f \, G(y-x_0, t_0) \mathrm{d}y \\
    &+ \frac{s^2}{2}(-t_0) \int_{\mathbb{R}^n} |\nabla f|^2 G(y-x_0, t_0) \mathrm{d}y.
    \end{split}
\end{equation}
We now estimate the second term on the RHS of \eqref{eq:gradient term in Gs}. Take $M>1$ sufficiently large. For any $|x_{0}|\ge M$, we consider the change of variable $y=x_{0}+\sqrt{-t_{0}}z$. Define three regions in $\mathbb{R}^{n}$ by
\[
\Omega_1=\left\{ z\in\mathbb{R}^n:\ |z|\le \frac{|x_0|}{2\sqrt{-t_0}} \right\},\qquad
\Omega_2=\left\{ z\in\mathbb{R}^n:\ \frac{|x_0|}{2\sqrt{-t_0}}\le |z|\le \frac{3|x_0|}{2\sqrt{-t_0}} \right\},
\]
\[
\Omega_3=\left\{ z\in\mathbb{R}^n:\ |z|> \frac{3|x_0|}{2\sqrt{-t_0}} \right\}.
\]
Then we have
\begin{equation*}
\begin{aligned}
&\quad(-t_0) \int_{\mathbb{R}^n} \nabla w \cdot \nabla f\, G(y-x_0,t_0){\rm d}y\\
&= (-t_0)\int_{\mathbb{R}^n}
\nabla w \cdot \nabla f\!\left(x_0+\sqrt{-t_0}\,z\right)\rho(z){\rm d}z \\
&\le I_{1}+I_{2}+I_{3}
\end{aligned}
\end{equation*}
with
\begin{equation*}
\begin{aligned}
I_{1} &= (-t_0)\int_{\Omega_1} |\nabla w||\nabla f|\bigl(x_0+\sqrt{-t_0}z\bigr)\rho(z){\rm d}z,\\
I_{2} &= (-t_0)\int_{\Omega_2} |\nabla w||\nabla f|\bigl(x_0+\sqrt{-t_0}z\bigr)\rho(z){\rm d}z,\\
I_{3} &= (-t_0)\int_{\Omega_3} |\nabla w||\nabla f|\bigl(x_0+\sqrt{-t_0}z\bigr)\rho(z){\rm d}z.
\end{aligned}
\end{equation*}
First, we treat the term $I_{3}$. If $|z|\ge \frac{3|x_0|}{2\sqrt{-t_0}}$, then
\begin{equation}\label{scaling}
\begin{aligned}
\left|x_0+\sqrt{-t_0}z\right|&\ge \sqrt{-t_0}|z|-|x_0| \\
&\ge \sqrt{-t_0}|z|-\frac{2}{3}\sqrt{-t_0}|z| \\
&= \frac{1}{3}\sqrt{-t_0}|z|\ge\frac{|x_{0}|}{2}\ge\frac{M}{2}.
\end{aligned}
\end{equation}
By \eqref{eq:w-decay1} and Lemmas \ref{lem nabla and hessian w decay} and \ref{lem eigenfunction decay}, there exists a positive constant $C$ such that
\begin{equation*}
\begin{aligned}
I_{3}
&= (-t_0)
\int_{\Omega_{3}}
|\nabla w||\nabla f|(x_0+\sqrt{-t_0}z)\rho(z){\rm d}z \\
&\le C(-t_0)
\int_{\Omega_{3}}
|x_0+\sqrt{-t_0}z|^{-2}\rho(z){\rm d}z.
    \end{aligned}
\end{equation*}
Together with \eqref{scaling}, we derive that there exists a positive constant $C$ such that
\begin{equation}\label{3 estimate}
\begin{aligned}
I_{3}
&\le C \int_{\Omega_{3}}
|z|^{-2}\,\rho(z){\rm d}z \\
&\le C \int_{\mathbb{R}^n}
|z|^{-2}\rho(z){\rm d}z
\le C .
\end{aligned}
\end{equation}

We next deal with term $I_{1}$, we divide the estimate into two cases.

 If $\dfrac{|x_0|}{\sqrt{-t_0}} \ge M$, then we have
\begin{equation}\label{I1 big}
I_{1} \le C(-t_0)|x_0|^{-2} \le C.
\end{equation}
On the other hand, if $\dfrac{|x_0|}{\sqrt{-t_0}} \le M$, then we have
\begin{equation}\label{I1 small}
I_{1}
\le C(-t_0)|x_0|^{-2}
\left(\frac{|x_0|}{\sqrt{-t_0}}\right)^n
\le C .
\end{equation}
Finally, we consider the term $I_{2}$, we have
\begin{equation}\label{I2}
\begin{aligned}
I_{2}
&= (-t_0)
\int_{\Omega_{2}}
|\nabla w||\nabla f|(x_0+\sqrt{-t_0}z)\rho(z){\rm d}z \\
&\le C(-t_0)
e^{\frac{|x_0|^2}{16t_0}}
\int_{\Omega_{2}}
|\nabla w||\nabla f|(x_0+\sqrt{-t_0}z){\rm d}z \\
&\le C(-t_0)
e^{\frac{|x_0|^2}{16t_0}}
(-t_0)^{-\frac{n}{2}}
\int_{\{|y|\le 4|x_0|\}} |\nabla w||\nabla f|{\rm d}y \\
&\le C(-t_0)
e^{\frac{|x_0|^2}{16t_0}}
(-t_0)^{-\frac{n}{2}}
|x_0|^{n-2} \\
&\le C
\left(\frac{|x_0|}{\sqrt{-t_0}}\right)^{n-2}
e^{\frac{|x_0|^2}{16t_0}}
\le C .
\end{aligned}
\end{equation}
By \eqref{3 estimate}-\eqref{I2}, we conclude that there exist $M\ge1$ and $C$ depending on $n$, $w$ and $M$ such that if $|x_{0}|\ge M$ and sufficiently large $\lvert-t_{0}\lvert$, then
\begin{equation*}
(-t_0)
\int_{\mathbb{R}^n}
|\nabla w||\nabla f|
G(y-x_0,t_0){\rm d}y
\le C .
\end{equation*}

Using a similar argument, together with the fact that $s\in[-2\epsilon,2\epsilon]$,  we can show that both the first and third terms on the RHS of \eqref{eq:gradient term in Gs} are bounded, which implies
\begin{equation*}
    \frac{1}{2}(-t_0) \int_{\mathbb{R}^n} |\nabla(w+sf)|^2 G(y-x_0, t_0) \mathrm{d}y\leq C.
\end{equation*}

We now estimate the second term on the RHS of \eqref{Gs-G0<0 expand}. Since $w+sf$ is bounded in $\mathbb{R}^{n}$, there exists a constant $\hat{C}$ such that
\begin{equation*}
    |w+sf|\le \hat{C} \quad \text{in } \mathbb R^n.
\end{equation*}
Then we have
\begin{equation*}
    e^{w+sf}\ge e^{-\hat{C}} \quad \text{in } \mathbb R^n.
\end{equation*}
By the normalization of the Gaussian kernel
\begin{equation*}
    \int_{\mathbb{R}^n} G(y - x_0, t_0)  {\rm d}y = 1,\quad \text{for any }x_{0}\in\mathbb{R}^{n},\quad t_{0}<0,
\end{equation*}
we get
\begin{equation*}
\int_{\mathbb R^n} e^{w+sf}G(y-x_0,t_0){\rm d}y \ge e^{-\hat{C}},
\end{equation*}
which means
\begin{equation*}
-(-t_0)\int_{\mathbb R^n} e^{w+sf}G(y-x_0,t_0){\rm d}y
\le Ct_0,\quad \text{for all sufficiently negative }\;t_{0}.
\end{equation*}

On the other hand, by Lemmas \ref{lem nabla and hessian w decay} and \ref{lem eigenfunction decay}, all the integrands involved in the remaining terms of \eqref{Gs-G0<0 expand}, apart from the logarithmic term $\log(-t_{0})$, are bounded. Since the Gaussian kernel is integrable, it follows that these terms are uniformly bounded and independent of $t_{0}$. Denote by $I(t_{0})$ the LHS of \eqref{Gs-G0<0 expand}. Then, for all sufficiently negative $t_{0}$, there exist two positive constants $C_1$ and $c$ such that
\begin{equation}\label{Gs expand rewrite}
   I(t_{0})\leq C_{1}+ct_{0}+\log(-t_{0})<0.
\end{equation}
This completes the proof of \eqref{Gs expand rewrite} for  sufficiently large $\lvert-t_{0}\lvert$ and sufficiently large $|x_{0}|$.
We postpone the discussion of the case $\lvert-t_{0}\lvert\rightarrow0$ to step (5).

\noindent\textbf{Step (5):} By step (4), there exists some $M>0$ so that if $|x_{0}|\geq M$ and $\lvert-t_{0}\lvert$ is sufficiently large, then $G(x_{0},t_{0},s)$ is strictly less than $\lambda(w)$. We now consider the case where $|x_{0}|\leq M$ and $\lvert-t_{0}\lvert$ is sufficiently large.

We first show that there exists a positive constant $C$ such that
\begin{equation*}
(-t_0) \int_{\mathbb{R}^n} |\nabla w|\cdot|\nabla f| G(y-x_0,t_0){\rm d}y\le C.
\end{equation*}
There are three cases:

\textbf{Case 1:}\;$2< n<3+\beta$. By Lemmas \ref{lem nabla and hessian w decay} and \ref{lem eigenfunction decay}, we obtain
\begin{align*}
&\quad(-t_0)\int_{\mathbb{R}^n}|\nabla w||\nabla f|G(y-x_0,t_0){\rm d}y\\
&\leq C(-t_0)^{1-\frac{n}{2}}\int_{\mathbb{R}^n}(1+|y|)^{-3-\beta}{\rm d}y \\
&\leq C(-t_0)^{1-\frac{n}{2}}\leq C.
\end{align*}

\textbf{Case 2:}\;$n>3+\beta$. In this case, we have
\begin{align*}
&\quad(-t_0) \int_{\mathbb{R}^n} |\nabla w||\nabla f|G(y-x_0,t_0){\rm d}y \\
&\leq C(-t_0)^{1-\frac{n}{2}} \int_{\{|y|\leq(-t_0)^{\alpha(\beta)}\}} (1+|y|)^{-3-\beta}{\rm d}y \\
&\quad + C(-t_0)\int_{\{|y|\geq(-t_0)^{\alpha(\beta)}\}} (-4\pi t_0)^{-\frac{n}{2}} e^{\frac{|y-x_0|^2}{4t_0}}{\rm d}y \leq C,
\end{align*}
where $\alpha(\beta):=\frac{n-2}{2(n-3-\beta)}$.

\textbf{Case 3:}\;$n=3+\beta$. By choosing $\alpha(\beta)>1/2$, this case can be proved in the same way as in Case 2.

Using arguments similar to those in cases 1-3, we deduce that
\begin{equation*}
    \frac{1}{2}(-t_0) \int_{\mathbb{R}^n} |\nabla(w+sf)|^2 G(y-x_0, t_0) \mathrm{d}y\leq C.
\end{equation*}

Next, following the same approach as in step (4), we show that \eqref{Gs-G0<0 expand} holds for $|x_{0}|\leq M$ and $\lvert-t_{0}\lvert$ sufficiently large.

Finally, we consider the case where $\lvert-t_{0}\lvert\rightarrow0$. Indeed, since $|\nabla w|$, $|\nabla f|$, $f$ are bounded in $\mathbb{R}^{n}$ and $w$ satisfies \eqref{eq:w-decay}, there exists a constant $C_{2}$ independent of $x_{0}$ such that
\begin{equation*}
    I(t_{0})\leq C_{2}+\log(-t_{0})<0,\quad \text{for sufficiently small }\; \lvert-t_{0}\lvert,
\end{equation*}
which means \eqref{Gs<G0}.
\end{proof}
\begin{remark}\label{rem 1}
Under the assumptions of Theorem 1.1, for every nonzero vector $y_0\in \mathbb R^n$, one has
\[
\nabla w\cdot y_0\not\equiv 0 \quad \text{in } \mathbb R^n.
\]
Equivalently,
\[
\int_{\mathbb R^n} (\nabla w\cdot y_0)^2\rho{\rm d}y>0.
\]

Indeed, if there exist some $y_0\neq 0$ such that
\[
\int_{\mathbb R^n} (\nabla w\cdot y_0)^2\rho{\rm d}y=0,
\]
then it follows from $\rho>0$ that
\[
\nabla w\cdot y_0\equiv 0 \quad \text{in } \mathbb R^n.
\]
Hence, for every fixed $y\in \mathbb R^n$, the function
\[
\psi(\mu):=w(y+\mu y_0)
\]
satisfies
\[
\psi'(\mu)=\nabla w(y+\mu y_0)\cdot y_0=0,
\]
which means $\psi$ is constant. Therefore,
\[
w(y+\mu y_0)\equiv w(y)\quad \text{for all }\mu\in \mathbb R.
\]
Namely, $w$ is invariant in the $y_0$-direction.

On the other hand, since $y_0\neq 0$, we have $|y+\mu y_0|\to \infty$ as $|\mu|\to\infty$. By the decay assumption \eqref{eq:w-decay}, we obtain
\begin{equation*}
    |w(y)|=|w(y+\mu y_0)|
\le C(1+|y+\mu y_0|)^{-1}\to 0, 
\qquad \text{as }|\mu|\to\infty.
\end{equation*}
Thus $w(y)=0$ for every $y\in \mathbb R^n$, namely $w\equiv 0$, which contradicts the assumption that $w$ is non-constant.
\end{remark}
\section{Rigidity result}
In this section, we prove the rigidity result. Theorem \ref{theorem 2} is motivated by the analogous result for self-shrinkers in mean curvature flow in Colding-Ilmanen-Minicozzi \cite{Colding-I-M2015}. The proof follows the iteration argument developed in \cite{Wang-W-W2025 Liouville}, which relies on the following two propositions.
\begin{proposition}\label{prop 1}
There exists $\varepsilon_{0}>0$ and $R_{1}$ such that for any solution $w$ of \eqref{stationary problem}, if $R>R_{1}$, $w$ is smooth in $\overline{B_{R}(0)}$ and
\begin{equation*}
\mathrm{dist}_R(w,0) + \mathrm{dist}_R\left(\Lambda(w), 2\right)
\le \varepsilon_{0},
\end{equation*}
then
\begin{equation*}
\mathrm{dist}_{R-3}(w,0) + \mathrm{dist}_{R-3}\left(\Lambda(w), 2\right)
\le \frac{\varepsilon_{0}}{10}.
\end{equation*}
\end{proposition}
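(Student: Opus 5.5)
The plan is to avoid the large drift $\frac y2\cdot\nabla$ in \eqref{stationary problem} by writing $v:=\Lambda(w)-2=y\cdot\nabla w$. Then \eqref{stationary problem} becomes the drift-free equation
\[
\Delta w=-(e^{w}-1)+\tfrac12 v \qquad\text{in } B_R .
\]
By hypothesis, $|w|\le\varepsilon_0$ and $|v|\le\varepsilon_0$ on $B_R$, so $|\Delta w|\le C\varepsilon_0$ there. Interior Schauder/$W^{2,p}$ estimates on unit balls $B_1(y_1)\subset B_R$, with constants independent of $y_1$ and $R$ because no drift appears, give
\[
\|\nabla w\|_{C^{1,\alpha}(B_{R-1})}\le C\varepsilon_0 .
\]
For $v$ I will use \eqref{eigen-1} in the form
\[
\Delta v-\tfrac y2\cdot\nabla v+(e^{w}-1)v+2(e^{w}-1)=0 ,
\]
and rewrite its drift term through $y\cdot\nabla v=y\cdot\nabla(y\cdot\nabla w)$, which is a combination of $v$ and $y_iy_j\partial_{ij}w$. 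This is the first place where $|y|$-growth has to be controlled.

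The key structural input is radial. From $|v|\le\varepsilon_0$ we get $|\partial_r w|\le\varepsilon_0/|y|$. Hence for $|y|\ge R_1$ (with $R_1$ large) the profile is almost constant along rays, up to an error $3\varepsilon_0/R_1$ over a radial step of length $3$. Combined with the $C^{2,\alpha}$ bound, this gives $|\partial_{rr}w|\le C\varepsilon_0/|y|$. Substituting into $e^{w}-1=\frac12 v-\Delta w$ shows that, modulo $O(\varepsilon_0/R_1)$, $w$ restricted to each sphere solves a spherical equation $|y|^{-2}\Delta_{S}w+(e^{w}-1)\approx\frac12 v$. I will then use the $v$-equation, tested against a cutoff $\chi$ supported in $B_{R-1}$ and equal to $1$ on $B_{R-2}$, together with the Gaussian weight $\rho(\cdot-y_1)$ recentred at points of the annulus. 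The spectral gap of the Ornstein--Uhlenbeck part then gives $L^2$-smallness of $v$ of order $\varepsilon_0^2+\varepsilon_0/R_1$, which local elliptic estimates for the $v$-equation upgrade to $L^\infty$ on $B_{R-3}$.

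On the bounded region $|y|\le R_1$ the drift is bounded, so standard local estimates apply. Here I will use a weighted energy estimate: multiply the linearised equation $\Delta w-\frac y2\cdot\nabla w+w=-(e^{w}-1-w)=O(w^2)$ by $\chi^2 w\rho$. The quadratic right-hand side contributes $C\varepsilon_0^2$, and the cutoff error lives near $|y|\sim R$, so it carries a factor $e^{-(R-1)^2/4}$. Choosing first $R_1$ large and then $\varepsilon_0$ small makes all error terms at most $\varepsilon_0/40$.

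The main obstacle is the improvement on the far region $R_1\le|y|\le R-3$. The tangential Laplacian $|y|^{-2}\Delta_S w$ is a priori only $O(\varepsilon_0)$, and locally there are genuinely small solutions of $\Delta w+w\approx0$, such as oscillations of amplitude $\varepsilon_0$. These are excluded only through the smallness of $v=y\cdot\nabla w$, i.e.\ by the radial near-homogeneity. What I would try first is to integrate the $v$-equation along rays, treating it as a first-order ODE in $r$ for $v$ with forcing $2(e^{w}-1)$. Smallness of $v$ on the whole segment $[r,r+3]$, together with the radial drift $-\frac r2\partial_r v$ of size $\sim r$, should force $|e^{w}-1|\lesssim\varepsilon_0/r$ and hence $|w|\le\varepsilon_0/20$. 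If that fails, the fallback is a contradiction/compactness argument at fixed $\varepsilon_0$ over the scale-$3$ annuli.
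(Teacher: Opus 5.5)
Your proposal has a genuine gap, and it is structural. Every mechanism you propose is perturbative: you linearize at $w=0$, treat $e^{w}-1-w$ as an $O(\varepsilon_0^2)$ error, and rely on spectral gaps or ODE balance. But the conclusion is false for the linearized equation. The function $w=\delta(|y|^2-2n)$ with $\delta=\varepsilon_0/(4R^2)$ solves $\Delta w-\frac y2\cdot\nabla w+w=0$ exactly and has $v=y\cdot\nabla w=2\delta|y|^2$. It satisfies $|w|+|v|\le\frac34\varepsilon_0$ on $B_R$, yet $|w|+|v|\to\frac34\varepsilon_0$ at $|y|=R-3$ as $R\to\infty$. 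Since your estimates would apply verbatim with zero error term, they cannot close.

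This mode defeats each of your steps.
\begin{itemize}
\item \emph{Ray ODE.} Along rays it balances your $v$-equation exactly: $\frac r2\partial_r v=2\delta r^2=\Delta v+2w$. Smallness of $v$ only gives $\partial_r v=O(\varepsilon_0/r)$, so the drift is $O(\varepsilon_0)$ and is matched by a forcing $2(e^w-1)$ of size $\varepsilon_0$. Nothing forces $|e^w-1|\lesssim\varepsilon_0/r$.
\item \emph{Annulus compactness.} Translated limits at $|y_1|\to\infty$ only retain $\partial_e\tilde w=0$ and $\Delta\tilde w+e^{\tilde w}-1=\frac12\tilde v$, with $\tilde v$ an unknown bounded function. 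This admits every constant $\tilde w=c$ with $\tilde v=2(e^{c}-1)$, so the fallback fails too.
\item \emph{Spectral gap for $v$.} This step is circular: the forcing $2(e^w-1)$ in the $v$-equation is $O(\varepsilon_0)$, not $O(\varepsilon_0^2)$. Also, the drift is not symmetric with respect to a recentred Gaussian.
\item \emph{Inner region.} Testing against $\chi^2w\rho$ controls only $\int\chi^2(|\nabla w|^2-w^2)\rho$. This form is indefinite: $-\Delta+\frac y2\cdot\nabla-1$ has eigenvalues $\frac k2-1$, negative on constants and linear functions and zero on degree-two Hermite polynomials such as $|y|^2-2n$. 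So it gives no smallness.
\item \emph{Radial transport.} $|\partial_r w|\le\varepsilon_0/r$ integrated from $R_1$ to $R$ only gives $\varepsilon_0\log(R/R_1)$. Radial structure alone cannot carry smallness outward.
\end{itemize}

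What is missing is an exact nonlinear identity that sees the sign of the quadratic term; that sign is what excludes the Hermite mode for the true equation. The paper uses $\Lambda(w)\ge2-\varepsilon_0>0$ on $B_R$ to set $\tau=e^{w}/\Lambda(w)$. By Lemma \ref{lem tau equation}, this satisfies $\operatorname{div}(\Lambda(w)^2\rho\nabla(\tau^2))=2(|\nabla\tau|^2+\tau^2|\nabla w|^2)\Lambda(w)^2\rho$. At linear order $\tau\approx\frac12(1+w-\frac v2)$, and $w-\frac v2$ is annihilated by $\Delta-\frac y2\cdot\nabla$; for the Hermite mode it is the constant $-2n\delta$. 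So all the information sits in the nonnegative quadratic right-hand side, which your bookkeeping discards.

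The rest of the paper's argument then runs as follows:
\begin{itemize}
\item Test the identity with $\phi^2$, where $\phi=1$ on $B_{R-s}$. The only error term lives where $\rho\lesssim e^{-(R-s)^2/4}$, giving $\int_{B_{R-1/2}}(|\nabla\tau|^2+\tau^2|\nabla w|^2)\Lambda(w)^2\rho\lesssim R^{n}e^{-(R-1/2)^2/4}$.
\item Since $\rho\gtrsim e^{-(R-1)^2/4}$ on $B_{R-1}$, this yields unweighted exponential smallness of $\nabla w$ and $\nabla\Lambda(w)$. Local estimates upgrade it to $|\nabla w|+|\nabla\Lambda(w)|\lesssim R^{n}e^{-R/8}$ on $B_{R-3}$.
\item Integrating along rays of length at most $R$ then costs only $CR^{n+1}e^{-R/8}$.
\item The inner ball $B_{5\sqrt n}$ is handled by compactness, not an energy estimate. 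A limit of counterexamples is a bounded global solution with $|\Lambda(w_\infty)-2|\le\varepsilon_0$, hence $\Lambda(w_\infty)>0$ and $w_\infty\equiv0$ by Lemma \ref{lem not change sign}.
\end{itemize}
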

\begin{proposition}\label{prop 2}
There exist two universal constants $R_{2}$ and $\theta$ such that for any solution $w$ of \eqref{stationary problem}, if $R>R_{2}$, $w$ is smooth in $\overline{B_{R}(0)}$ and
\begin{equation}\label{eq condition in prop2}
\mathrm{dist}_R(w,0) + \mathrm{dist}_R\left(\Lambda(w), 2\right)
\le \frac{\varepsilon_{0}}{10},
\end{equation}
then $w$ is smooth in $\overline{B_{(1+\theta)R}(0)}$ and
\begin{equation*}
\mathrm{dist}_{(1+\theta)R}(w,0) + \mathrm{dist}_{(1+\theta)R}\left(\Lambda(w), 2\right)
\le \varepsilon_{0},
\end{equation*}
where $\varepsilon_{0}$ is given in Proposition \ref{prop 1}.
\end{proposition}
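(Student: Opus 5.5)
The plan is to pass back to the original parabolic picture and use that the self-similar variables turn "closeness on a ball at time $-1$" into "closeness on a larger ball at earlier times". Given a solution $w$ of \eqref{stationary problem}, set
\[
u(x,t)=w\!\left(\frac{x}{\sqrt{-t}}\right)-\log(-t),\qquad t<0 .
\]
This $u$ solves $\partial_t u=\Delta u+e^{u}$ wherever $w$ is defined. It is compared with the ODE solution $\bar u(t)=-\log(-t)$, which corresponds to $w\equiv0$.

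A direct computation, in the same spirit as the proof of Lemma \ref{lem eigenvalue problem}, gives
\[
\partial_t u=\frac{1}{-t}\Bigl(1+\tfrac12 y\cdot\nabla w\Bigr)=\frac{\Lambda(w)}{2(-t)},\qquad y=\frac{x}{\sqrt{-t}}.
\]
So the quantity controlled in \eqref{eq condition in prop2} is equivalent, at each time, to
\[
|u-\bar u|\le\varepsilon,\qquad (-t)\,|\partial_t u-\partial_t\bar u|\le \varepsilon/2
\]
on the parabolic region $\{|x|<R\sqrt{-t}\}$. Consequently the target statement on $B_{(1+\theta)R}$ is exactly this pair of estimates at the single time $t=-1$ on $\{|x|<(1+\theta)R\}$.

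\textbf{Step 1 (choice of initial time).} Fix an earlier time $t_1=-(1+2\theta)^2$. By hypothesis \eqref{eq condition in prop2}, at $t_1$ we have $|u-\bar u|\le \varepsilon_0/10$ and $(-t)|\partial_t(u-\bar u)|\le\varepsilon_0/20$ on $|x|<(1+2\theta)R$. The same holds on the whole parabolic region $\{|x|<R\sqrt{-t}\}$ for $t\in[t_1,-1]$. The time interval $[t_1,-1]$ has length of order $\theta$, independent of $R$.

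\textbf{Step 2 (localized stability of the ODE solution).} Let $v=u-\bar u$. Then
\[
\partial_t v=\Delta v+c(x,t)\,v,\qquad c=\frac{e^{u}-e^{\bar u}}{u-\bar u},
\]
and $|c|\le C(m)$ on $[t_1,-1]$ because $u$ stays bounded there; here I use the global bound $\|w\|_{L^\infty(\mathbb R^n)}\le m$ available in Theorem \ref{theorem 2}. I would multiply by a cutoff $\eta$ supported in $B_{(1+2\theta)R}$ with $\eta\equiv1$ on $B_{(1+\frac32\theta)R}$, or equivalently use a heat-kernel representation on $[t_1,-1]$. The output should be, for $|x|<(1+\tfrac32\theta)R$,
\[
|v(x,-1)|\le e^{C\theta}\,\frac{\varepsilon_0}{10}+C(m)\,e^{-c\,\theta R^2}.
\]
The inside contribution is controlled by the initial smallness. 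The outside contribution, where only the crude bound $|v|\le 2m+C$ is known, must travel a distance $\sim\theta R$ in time $\sim\theta$, and is therefore suppressed by a Gaussian factor.

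\textbf{Step 3 (the time derivative).} The same argument applies to $\partial_t v$, which solves the linear equation differentiated in $t$ with coefficient $e^{u}$. The lower-order term $(e^{u}-e^{\bar u})\,\partial_t\bar u$ is $O(|v|)$ and is already controlled by Step 2. Interior parabolic Schauder estimates supply the needed a priori bounds on $\partial_t u$ and $\nabla u$ in terms of $m$, which handle the outside region.

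\textbf{Step 4 (choice of constants).} Choose $\theta$ small so that $e^{C\theta}\le 4$. Then choose $R_2$ large so that the Gaussian errors are below $\varepsilon_0/20$. Translating back to $w$ at $t=-1$ gives the bound $\varepsilon_0$ on $B_{(1+\theta)R}$, together with smoothness there, which is automatic from parabolic regularity.

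The main obstacle is Step 2. One has to obtain a localization estimate whose constant does not degrade with $R$, even though the region is huge and $v$ is only known to be bounded (not small) outside. The Gronwall factor must depend only on $\theta$ and $m$, while the tail must be exponentially small in $R^2$. I would first try an explicit comparison with a supersolution of the form $e^{K(t-t_1)}\bigl(\varepsilon_0/10+A\,e^{(|x|-(1+2\theta)R)/\sqrt{\theta}}\bigr)$, and fall back to a Duhamel formula with the heat kernel if the barrier does not close.
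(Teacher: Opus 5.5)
Your proof is correct, but it takes a different route from the paper's.

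\textbf{The paper's route.} The paper never solves an evolution problem. It fixes $x\in B_{R-5}$ and uses $u(x,-1)=w(x)$ and $u(x,-1+s_*)=w(x/\sqrt{1-s_*})-\log(1-s_*)$, with the analogous identities for $\partial_t u=\Lambda(w)/(2(-t))$. It then bounds $|\partial_t u|+|\partial_{tt}u|\le C$ by parabolic regularity, so these values differ by $O(s_*)$. Since the \emph{later} time $-1+s_*$ sees the \emph{farther} point $x/\sqrt{1-s_*}$, smallness on $B_R$ transfers to $B_{(R-5)/\sqrt{1-s_*}}\supset B_{(1+\theta)R}$. In self-similar variables this is the fundamental theorem of calculus along rays, using global bounds on $\Lambda(w)$ and $y\cdot\nabla\Lambda(w)$.

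\textbf{Your route.} You move the initial time \emph{back} to $t_1=-(1+2\theta)^2$, where the hypothesis already covers $B_{(1+2\theta)R}$ in the $x$-variable. You then propagate forward via the equations for $v=u-\bar u$ and $\partial_t v$, with the uncontrolled exterior killed by a Gaussian tail.

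\textbf{Comparison.} Your route uses more structure: a comparison or Duhamel argument for a linear equation with bounded potential on all of $\mathbb{R}^n\times[t_1,-1]$. In return, the loss is multiplicative: $e^{C\theta}$ times the initial smallness (plus $C\theta\varepsilon_0$ from the source in Step 3) plus $O(e^{-c\theta R^2})$. So $\theta$ can be chosen depending only on $n,m$. The paper's error $Cs_*$ is additive, which forces $s_*$, and hence $\theta$, to be small relative to $\varepsilon_0$. The paper's argument is shorter and purely pointwise.

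Both proofs rest on the same inputs. The first is the global bound $\|w\|_{L^\infty(\mathbb{R}^n)}\le m$, which is not among the proposition's stated hypotheses but is used in the paper's proof as well. The second is the resulting scale-invariant parabolic bounds, in particular global boundedness of $\Lambda(w)$, which you need for the data of $\partial_t v$ outside $B_{(1+2\theta)R}$.

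\textbf{A caution on your Step 2.} The barrier you try first, $e^{K(t-t_1)}\bigl(\varepsilon_0/10+A e^{(|x|-(1+2\theta)R)/\sqrt{\theta}}\bigr)$, does not close as written. The Laplacian of $e^{|x|/\sqrt\theta}$ produces a factor $1/\theta$, forcing $K\gtrsim 1/\theta$. Over an interval of length $\approx4\theta$ this amplifies the inner term by $e^{O(1)}$, roughly $e^4$, which pushes it above $\varepsilon_0$. Any of the following fixes it:
\begin{itemize}
\item decouple the time factors of the two terms;
\item take the exponential's length scale of order one, so that $K=O(1)$ and the tail is $O(e^{-c\theta R})$;
\item use the heat-kernel comparison $|v(\cdot,t)|\le e^{K(t-t_1)}\,\Gamma(\cdot,t-t_1)*|v(\cdot,t_1)|$, valid for bounded $v$ with $K\ge\sup|c|$, which closes immediately.
\end{itemize}
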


According to Proposition \ref{prop 1}, establishing a worse bound on a sufficiently large ball $B_{R}(0)$ leads to a better bound on a slightly smaller ball $B_{R-3}(0)$. Conversely, Proposition \ref{prop 2} reveals that a better bound on a large ball $B_{R}(0)$ implies a worse bound on $B_{(1+\theta)R}(0)$ for some fixed $\theta>0$. Importantly, the constant $\theta$ does not depend on $R$.

To prove Proposition \ref{prop 1}, we need several lemmas.
\begin{lemma}\label{lem large scale control gives local rigidity}
    For any $R_{0}>1$ and $\varepsilon>0$, there exists $R_{1}\ge R_{0}$ such that if $R>R_{1}$, the solution of \eqref{stationary problem} $w$ is smooth in $B_{R}(0)$ and it satisfies
\begin{equation*}
\mathrm{dist}_R(w,0) + \mathrm{dist}_R\left(\Lambda(w), 2\right)
\le \varepsilon_{0},
\end{equation*}
where $\varepsilon_{0}$ is defined in Proposition \ref{prop 1}, then
\begin{equation*}
    \mathrm{dist}_{R_{0}}(w,0) + \mathrm{dist}_{R_{0}}\left(\Lambda(w), 2\right)
    \le \varepsilon.
\end{equation*}
\end{lemma}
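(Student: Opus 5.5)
A compactness-and-contradiction argument should do it, modeled on the corresponding step in \cite{Wang-W-W2025 Liouville}. Fix $R_0>1$ and $\varepsilon>0$, and suppose the conclusion fails. Then there are radii $R_k\to\infty$ and solutions $w_k$ of \eqref{stationary problem}, smooth in $B_{R_k}(0)$, with
\[
\mathrm{dist}_{R_k}(w_k,0)+\mathrm{dist}_{R_k}(\Lambda(w_k),2)\le\varepsilon_0
\]
but
\[
\mathrm{dist}_{R_0}(w_k,0)+\mathrm{dist}_{R_0}(\Lambda(w_k),2)>\varepsilon .
\]

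\textbf{Compactness.} On each fixed ball $B_\rho(0)$ the bound $|w_k|\le\varepsilon_0$ holds, and so does $|y\cdot\nabla w_k|\le\varepsilon_0$. The equation $\Delta w_k-\frac12 y\cdot\nabla w_k=1-e^{w_k}$ then has drift coefficient bounded by $\rho$ and bounded right-hand side. Interior $W^{2,p}$ estimates followed by Schauder estimates give uniform $C^{2,\alpha}(B_{\rho-1})$ bounds; bootstrapping, these become $C^k$ bounds for every $k$. A diagonal Arzel\`a--Ascoli argument yields a subsequence converging in $C^2_{loc}(\mathbb{R}^n)$ to a smooth entire solution $w_\infty$ of \eqref{stationary problem}.

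\textbf{Properties of the limit.} The limit satisfies $|w_\infty|\le\varepsilon_0$ and $|\Lambda(w_\infty)-2|\le\varepsilon_0$ on all of $\mathbb{R}^n$. Since $\Lambda$ involves only first derivatives and the convergence is $C^1$ on $\overline{B_{R_0}}$, the lower bound passes to the limit in the form
\[
\mathrm{dist}_{R_0}(w_\infty,0)+\mathrm{dist}_{R_0}(\Lambda(w_\infty),2)\ge\varepsilon .
\]
On the other hand, if $\varepsilon_0<2$ then $\Lambda(w_\infty)\ge 2-\varepsilon_0>0$, so $\Lambda(w_\infty)$ does not change sign. Because $w_\infty$ is bounded, Lemma \ref{lem not change sign} shows it is constant. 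The bounded constant solutions are the zeros of $e^c-1$, i.e. only $c=0$, so $w_\infty\equiv0$ and $\Lambda(w_\infty)\equiv2$. This contradicts the lower bound, and therefore $R_1$ exists.

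\textbf{Main obstacle.} The delicate point is obtaining estimates that are uniform in $k$ even though the drift $\frac y2\cdot\nabla$ is unbounded. Working on fixed balls avoids this: there the drift is bounded by $\rho/2$, and the hypothesis on $\Lambda$ already controls $y\cdot\nabla w_k$. Near the origin, where $y\cdot\nabla w_k$ gives no information on $\nabla w_k$, the $L^\infty$ bound on $w_k$ together with standard elliptic $W^{2,p}$ estimates suffices. The only smallness needed from $\varepsilon_0$ is $\varepsilon_0<2$, which guarantees that $\Lambda$ keeps a fixed sign.
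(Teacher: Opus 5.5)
Your proposal is correct and follows the paper's proof essentially step for step: a contradiction sequence with $R_k\to\infty$, local elliptic compactness yielding an entire bounded limit $w_\infty$, Lemma~\ref{lem not change sign} forcing $w_\infty\equiv0$, and a contradiction with the lower bound on $B_{R_0}$. You also make explicit two points the paper leaves implicit: $\varepsilon_0<2$ is what guarantees $\Lambda(w_\infty)>0$, and $0$ is the only constant solution of \eqref{stationary problem}.
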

\begin{proof}
We argue by contradiction. Suppose that there exist $R_{0}$ and $\varepsilon$, and a sequence of smooth solutions $w_{i}$ to \eqref{stationary problem} in $B_{R_{i}}(0)$ with $R_{i}\rightarrow+\infty$, such that
\begin{equation}\label{eq:Ri dist}
\mathrm{dist}_{R_{i}}(w_{i},0) + \mathrm{dist}_{R_{i}}\left(\Lambda(w_{i}), 2\right)
\le \varepsilon_{0},
\end{equation}
but
\begin{equation}\label{eq:contradict R0 dist}
    \mathrm{dist}_{R_{0}}(w_{i},0) + \mathrm{dist}_{R_{0}}\left(\Lambda(w_{i}), 2\right)
    > \varepsilon.
\end{equation}
By \eqref{eq:Ri dist} and standard elliptic regularity theory, there exists a smooth solution  $w_{\infty}$ of \eqref{stationary problem} such that $w_{i}\rightarrow w_{\infty}$ in $C_{loc}^{\infty}(\mathbb{R}^{n})$. Passing to the limit in \eqref{eq:Ri dist} gives
\begin{equation*}
    |w_{\infty}-0|+|\Lambda(w_{\infty})-2|\leq\varepsilon_{0}.
\end{equation*}
Then by Lemma \ref{lem not change sign}, $w_{\infty}\equiv0$.

On the other hand, the $C_{loc}^{\infty}(\mathbb{R}^{n})$ convergence applied to \eqref{eq:contradict R0 dist} implies
\begin{equation*}
    \mathrm{dist}_{R_{0}}(w_{\infty},0) + \mathrm{dist}_{R_{0}}\left(\Lambda(w_{\infty}), 2\right)
    \ge \varepsilon,
\end{equation*}
which clearly contradicts $w_{\infty}\equiv0$.
\end{proof}

The above lemma shows that $w$ can be made arbitrarily close to $0$ on any fixed compact set. To complete the proof of Proposition \ref{prop 1}, it suffices to prove that both $\nabla w$ and $\nabla\Lambda(w)$ are exponentially small in $R$. For this purpose, we introduce
\begin{equation*}
    \tau:=\frac{e^{w}}{\Lambda(w)}.
\end{equation*}
Since $e^{w}$ and $\Lambda(w)$ are continuous and positive on $B_{R}(0)$, this function is well defined. This auxiliary function $\tau$ satisfies the following equations.
\begin{lemma}\label{lem tau equation}
The function $\tau$ satisfies
\begin{equation}\label{tau eq1}
\frac{1}{\Lambda(w)^{2}\rho}\operatorname{div}\left(\Lambda(w)^{2}\rho\nabla\tau\right)
= \frac{e^{w}|\nabla w|^{2}}{\Lambda(w)}.
\end{equation}
\begin{equation}\label{tau eq2}
\frac{1}{\Lambda(w)^{2}\rho}\operatorname{div}\left(\Lambda(w)^{2}\rho\nabla(\tau^{2})\right)
= 2|\nabla \tau|^{2} + 2\tau \frac{e^{w}|\nabla w|^{2}}{\Lambda(w)}.
\end{equation}
\end{lemma}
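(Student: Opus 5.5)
The plan is a direct computation based on two facts: the weighted divergence operator is a drift Laplacian, and both $e^{w}$ and $\Lambda(w)$ satisfy simple linear equations involving the Ornstein--Uhlenbeck operator $L_{OU}=\Delta-\frac{y}{2}\cdot\nabla$. The computation is local, so I work in $B_R(0)$, where $\Lambda(w)>0$ and $\tau$ is smooth. First I expand the left-hand side of \eqref{tau eq1}. Since $\nabla\rho=-\frac{y}{2}\rho$, for any smooth $\varphi$
\begin{equation*}
\frac{1}{\Lambda(w)^{2}\rho}\operatorname{div}\left(\Lambda(w)^{2}\rho\nabla\varphi\right)
= L_{OU}\varphi+\frac{2}{\Lambda(w)}\nabla\Lambda(w)\cdot\nabla\varphi .
\end{equation*}
I denote this operator by $\mathcal{D}\varphi$.

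Next I record the two equations I will use. From \eqref{stationary problem}, $L_{OU}w=1-e^{w}$, and therefore
\begin{equation*}
L_{OU}(e^{w})=e^{w}\bigl(L_{OU}w+|\nabla w|^{2}\bigr)=e^{w}\bigl(1-e^{w}+|\nabla w|^{2}\bigr).
\end{equation*}
From \eqref{eigen-1} in Lemma \ref{lem eigenvalue problem}, $L_{OU}\Lambda(w)=\Lambda(w)-e^{w}\Lambda(w)$.

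The key step applies the product rule to $e^{w}=\tau\Lambda(w)$:
\begin{equation*}
L_{OU}(e^{w})=\Lambda(w)L_{OU}\tau+\tau L_{OU}\Lambda(w)+2\nabla\tau\cdot\nabla\Lambda(w)=\Lambda(w)\,\mathcal{D}\tau+\tau L_{OU}\Lambda(w).
\end{equation*}
Substituting the two equations, and using $\tau L_{OU}\Lambda(w)=\tau\Lambda(w)(1-e^{w})=e^{w}(1-e^{w})$, the terms $e^{w}(1-e^{w})$ cancel. This leaves $\Lambda(w)\mathcal{D}\tau=e^{w}|\nabla w|^{2}$, and dividing by $\Lambda(w)>0$ gives \eqref{tau eq1}.

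For \eqref{tau eq2}, I use that $\mathcal{D}$ is a second-order operator with no zeroth-order term and with principal part $\Delta$. Hence $\mathcal{D}(\tau^{2})=2\tau\mathcal{D}\tau+2|\nabla\tau|^{2}$, and inserting \eqref{tau eq1} gives the claim.

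There is no real obstacle here. The only points needing care are the sign bookkeeping in the cancellation of the $e^{w}(1-e^{w})$ terms, and checking that the drift coefficient coming from $\nabla(\Lambda(w)^{2}\rho)$ is exactly $-\frac{y}{2}+2\nabla\log\Lambda(w)$. Positivity of $\Lambda(w)$ on $B_R(0)$ is assumed, as stated just before the lemma.
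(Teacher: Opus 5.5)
Your proposal is correct and follows essentially the same route as the paper. Both write the weighted divergence as the drift Laplacian $\Delta+\frac{2}{\Lambda(w)}\nabla\Lambda(w)\cdot\nabla-\frac{y}{2}\cdot\nabla$, substitute the equations $L_{OU}(e^{w})=e^{w}(1-e^{w}+|\nabla w|^{2})$ and $L_{OU}\Lambda(w)=(1-e^{w})\Lambda(w)$ so that the $e^{w}(1-e^{w})$ terms cancel, and then obtain the second identity from $\mathcal{D}(\tau^{2})=2\tau\mathcal{D}\tau+2|\nabla\tau|^{2}$. The only difference is bookkeeping: you apply the product rule to $e^{w}=\tau\Lambda(w)$, while the paper applies the quotient rule to $\tau=e^{w}/\Lambda(w)$ and cancels the $\frac12 y\cdot\nabla\tau$ drift terms by hand.
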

\begin{proof}
    Since $w$ is a solution of \eqref{stationary problem}, the function $e^{w}$ satisfies
\begin{equation*}
\Delta(e^{w})-\frac{1}{2}y\cdot\nabla(e^{w})+(e^{w}-1)e^{w}
= e^{w}|\nabla w|^{2}.
\end{equation*}
It follows from Lemma \ref{lem eigenvalue problem} that
\begin{equation}\label{eq:L Lambda}
\Delta \Lambda(w) - \frac{1}{2} y \cdot \nabla \Lambda(w)
+ (e^{w} - 1)\Lambda(w) = 0.
\end{equation}
By a direct computation, we obtain
\begin{align*}
\frac{1}{\Lambda(w)^{2}\rho}\,
\operatorname{div}\!\left(\Lambda(w)^{2}\rho\nabla\tau\right)
&= \Delta\tau
 + \frac{2}{\Lambda(w)}\,\nabla\Lambda(w)\cdot\nabla\tau
 - \frac{1}{2}y\cdot\nabla\tau \\[6pt]
&= \frac{\Lambda(w)\Delta(e^{w})-e^{w}\Delta\Lambda(w)}{\Lambda(w)^{2}}
 - \frac{1}{2}y\cdot\nabla\tau \\[6pt]
&= \Lambda(w)^{-1}
   \Bigl[\tfrac12 y\cdot\nabla(e^{w})
   -(e^{w}-1)e^{w}
   +e^{w}\lvert\nabla w\rvert^{2}\Bigr]  \\
&\quad
 -\frac{e^{w}}{\Lambda(w)^{2}}
   \Bigl[\tfrac12 y\cdot\nabla\Lambda(w)
   -(e^{w}-1)\Lambda(w)\Bigr]
 -\frac12 y\cdot\nabla\tau \\[6pt]
&= \frac{e^{w}\lvert\nabla w\rvert^{2}}{\Lambda(w)},
\end{align*}
which is \eqref{tau eq1}.

Since
\begin{equation*}
\frac{1}{\Lambda(w)^{2}\rho}
\operatorname{div}\left(\Lambda(w)^{2}\rho\nabla (\tau^{2})\right)
=
\frac{2\tau}{\Lambda(w)^{2}\rho}\,
\operatorname{div}\left(\Lambda(w)^{2}\rho\nabla \tau\right)
+2|\nabla\tau|^{2},
\end{equation*}
\eqref{tau eq2} follows from \eqref{tau eq1}.
\end{proof}

Using Lemma \ref{lem tau equation}, we obtain the following integral estimate for $|\nabla w|$ and $|\nabla\Lambda(w)|$.

\begin{lemma}\label{lem nabla w and Lambda integral estimate}
    For any $s\in(0,R)$, we have
\begin{equation}\label{eq:nabla w and Lambda integral estimate}
\int_{B_{R-s}(0)}
\left(
|\nabla \tau|^{2}
+
\frac{\tau e^{w}|\nabla w|^{2}}{\Lambda(w)}
\right)
\Lambda(w)^{2}\rho{\rm d}y
\le
\frac{C}{s^{2}} R^{n} e^{-(R-s)^{2}/4}
\sup_{y\in B_{R}(0)} e^{2w(y)},
\end{equation}
where $C$ is a positive constant independent of $R$.
\end{lemma}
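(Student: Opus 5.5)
We will prove the estimate with a Caccioppoli-type argument applied to the divergence identity \eqref{tau eq2} of Lemma \ref{lem tau equation}, using a cutoff supported in $B_R(0)$. Throughout we work under the standing assumption that $\Lambda(w)>0$ and $e^w$ is bounded on $B_R(0)$, so that $\tau$ is well defined and positive there. Fix a cutoff $\phi\in C_0^\infty(B_R(0))$ with $0\le\phi\le1$, $\phi\equiv1$ on $B_{R-s}(0)$ and $|\nabla\phi|\le C/s$.

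\textbf{Step 1: integrate \eqref{tau eq2}.} Multiply \eqref{tau eq2} by $\phi^{2}\Lambda(w)^2\rho$ and integrate over $B_R(0)$. The right-hand side gives
\[
\int\Big(2|\nabla\tau|^2+2\tau\frac{e^w|\nabla w|^2}{\Lambda(w)}\Big)\phi^2\Lambda(w)^2\rho\,{\rm d}y .
\]
On the left, integrating the divergence by parts gives
\[
-\int \nabla(\phi^2)\cdot\nabla(\tau^2)\,\Lambda(w)^2\rho\,{\rm d}y=-4\int\phi\,\tau\,\nabla\phi\cdot\nabla\tau\,\Lambda(w)^2\rho\,{\rm d}y .
\]
Note that $\tau e^w|\nabla w|^2/\Lambda(w)\ge0$ because $\Lambda(w)>0$, so every term on the right-hand side is nonnegative.

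\textbf{Step 2: absorb the cross term.} By Cauchy--Schwarz,
\[
4|\phi\tau\nabla\phi\cdot\nabla\tau|\le |\nabla\tau|^2\phi^2+4\tau^2|\nabla\phi|^2 .
\]
Absorbing the gradient part into the right-hand side yields
\[
\int\Big(|\nabla\tau|^2+2\tau\frac{e^w|\nabla w|^2}{\Lambda(w)}\Big)\phi^2\Lambda(w)^2\rho
\le 4\int\tau^2|\nabla\phi|^2\Lambda(w)^2\rho .
\]
Since $\tau^2\Lambda(w)^2=e^{2w}$, the right-hand side equals $4\int e^{2w}|\nabla\phi|^2\rho$. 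This cancellation of $\Lambda(w)$ is the reason the final bound involves only $e^{2w}$ and not $\Lambda(w)$.

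\textbf{Step 3: estimate the cutoff region.} The support of $\nabla\phi$ lies in the annulus $B_R\setminus B_{R-s}$, where $|\nabla\phi|^2\le C/s^2$ and $\rho\le(4\pi)^{-n/2}e^{-(R-s)^2/4}$. The annulus has volume at most $CR^n$. Hence
\[
4\int e^{2w}|\nabla\phi|^2\rho\le \frac{C}{s^2}R^ne^{-(R-s)^2/4}\sup_{B_R}e^{2w}.
\]
Restricting the left-hand side to $B_{R-s}(0)$, where $\phi=1$, gives \eqref{eq:nabla w and Lambda integral estimate} with a constant depending only on $n$.

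\textbf{Main obstacle.} The argument is routine once $\Lambda(w)>0$ on $B_R$ is available. The delicate point is justifying the integration by parts up to the boundary, which requires smoothness of $w$ on $\overline{B_R}$ together with the positivity of $\Lambda$. Both follow from the hypotheses of Proposition \ref{prop 1}, since $|\Lambda-2|\le\varepsilon_0<1$ there.
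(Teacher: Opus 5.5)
Your proposal is correct and follows the paper's proof essentially step for step: you multiply \eqref{tau eq2} by $\phi^2\Lambda(w)^2\rho$, integrate by parts, absorb the cross term with the same Cauchy--Schwarz inequality, and then use $\tau^2\Lambda(w)^2=e^{2w}$ together with the Gaussian bound on the annulus $B_R(0)\setminus B_{R-s}(0)$. One minor point: since $\phi$ is compactly supported in $B_R(0)$, no boundary terms arise, so you only need smoothness of $w$ on the open ball and $\Lambda(w)>0$ there, not smoothness up to $\partial B_R(0)$.
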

\begin{proof}
By \eqref{tau eq2},  one has
\begin{equation}\label{nabla tau square eq}
\frac{1}{\Lambda^{2}\rho}
\operatorname{div}\left(\Lambda(w)^{2}\rho\nabla(\tau^{2})\right)
=2|\nabla\tau|^{2}+2\tau^{2}|\nabla w|^{2}\quad \text{in } B_R(0).
\end{equation}
For any $\phi\in C_{0}^{\infty}(B_{R}(0))$, multiplying \eqref{nabla tau square eq} by $\phi^{2}\Lambda(w)^{2}\rho$ and integrating over $B_{R}(0)$, we obtain
\begin{equation*}
\int_{B_R(0)} \phi^2 \operatorname{div}\left(\Lambda(w)^2 \rho \nabla(\tau^2)\right) {\rm d}y
=
2 \int_{B_R(0)} \phi^2 \left(|\nabla \tau|^2 + \tau^2 |\nabla w|^2\right)\Lambda^2 \rho {\rm d}y .
\end{equation*}
Since $\phi\in C_{0}^{\infty}(B_{R}(0))$, we infer
\begin{equation*}
2\int_{B_R(0)} \phi^2
\left(
|\nabla \tau|^2 + \tau^2 |\nabla w|^2
\right)
\Lambda(w)^2 \rho{\rm d}y
=
-4\int_{B_R(0)} \phi\tau\nabla\phi\cdot\nabla\tau\,
\Lambda(w)^2 \rho{\rm d}y.
\end{equation*}
Applying the Cauchy-Schwarz inequality $4|\phi \tau \nabla \phi \cdot \nabla \tau |
\le\phi^2 |\nabla \tau|^2 + 4\tau^2 |\nabla \phi|^2$, we obtain
\begin{equation*}
2 \int_{B_R(0)} \phi^2 \left(|\nabla \tau|^2 + \tau^2 |\nabla w|^2\right) \Lambda(w)^2 \rho {\rm d}y
\le
\int_{B_R(0)} \phi^2 |\nabla \tau|^2 \Lambda(w)^2 \rho {\rm d}y
+
4 \int_{B_R(0)} \tau^2 |\nabla \phi|^2 \Lambda(w)^2 \rho {\rm d}y .
\end{equation*}
Hence
\begin{equation*}
\int_{B_R(0)} \phi^2 \left(|\nabla \tau|^2 + \tau^2 |\nabla w|^2\right)
\Lambda(w)^2 \rho {\rm d}y
\le
4 \int_{B_R(0)} \tau^2 |\nabla \phi|^2 \Lambda(w)^2 \rho {\rm d}y .
\end{equation*}
Choose $\phi$ satisfying $\phi\equiv1$ in $B_{R-s}(0)$ and $|\nabla\phi|\leq\frac{2}{s}$ in $B_{R}(0)\setminus B_{R-s}(0)$. Combining the definition of $\tau$,  we have
\begin{equation*}
\int_{B_{R-s}(0)}
\left(
|\nabla \tau|^{2}
+
\frac{\tau e^{w}|\nabla w|^{2}}{\Lambda(w)}
\right)
\Lambda(w)^{2}\rho{\rm d}y
\le
\frac{C}{s^{2}} R^{n} e^{-(R-s)^{2}/4}
\sup_{y\in B_{R}(0)} e^{2w(y)},
\end{equation*}
which is \eqref{eq:nabla w and Lambda integral estimate}.
\end{proof}

As a consequence of Lemma \ref{lem nabla w and Lambda integral estimate} and standard elliptic estimates, we derive the following pointwise bound for $|\nabla w|$ and $|\nabla\Lambda(w)|$.
\begin{lemma}\label{lem pointwise nabla w and Lambda estimate}
    There exists a universal constant $C$ such that
\begin{equation}\label{eq:nabla w and nabla Lambda estimate}
\sup_{y\in B_{R-3}(0)}
\left(
|\nabla \Lambda(w)(y)|^2 + |\nabla w(y)|^2
\right)
\le
C R^{2n} e^{-\frac{R}{4}}.
\end{equation}
\end{lemma}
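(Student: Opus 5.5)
We work under the standing hypothesis of Proposition \ref{prop 1}: $w$ is smooth on $\overline{B_R(0)}$ and $\mathrm{dist}_R(w,0)+\mathrm{dist}_R(\Lambda(w),2)\le\varepsilon_0$, with $\varepsilon_0$ small and $R$ large. Then $|w|\le\varepsilon_0$ and $2-\varepsilon_0\le\Lambda(w)\le 2+\varepsilon_0$ on $B_R(0)$. Consequently $\tau=e^w/\Lambda(w)$ is comparable to $1/2$, $\Lambda(w)^2$ is comparable to $4$, and $\sup_{B_R}e^{2w}\le e^{2\varepsilon_0}$.

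\textbf{Step 1: an $L^2$ bound on $B_{R-1}(0)$.} Apply Lemma \ref{lem nabla w and Lambda integral estimate} with $s=1$, and use the bounds above. This gives
\[
\int_{B_{R-1}(0)}\bigl(|\nabla\tau|^2+|\nabla w|^2\bigr)\rho\,{\rm d}y\le C R^n e^{-(R-1)^2/4}.
\]
On $B_{R-1}(0)$ we have $\rho\ge (4\pi)^{-n/2}e^{-(R-1)^2/4}$. Dividing by this lower bound yields the unweighted estimate
\[
\int_{B_{R-1}(0)}\bigl(|\nabla\tau|^2+|\nabla w|^2\bigr){\rm d}y\le C R^n.
\]
This is not small. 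To get decay we should instead localize, and this is the main obstacle. For a point $y_0\in B_{R-3}(0)$, on the ball $B_1(y_0)$ we have $\rho\ge ce^{-(|y_0|+1)^2/4}$. This only helps when $|y_0|$ is close to $R$, so the crude bound above does not give the claimed decay.

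\textbf{Step 2: recovering decay by shrinking the cutoff.} I would choose $s$ and the cutoff more carefully. Take $s=R/2$ in Lemma \ref{lem nabla w and Lambda integral estimate}. The right-hand side is then at most $CR^{n-2}e^{-R^2/16}$ on $B_{R/2}(0)$. For $|y_0|\le R/2-1$, the lower bound $\rho\ge c\,e^{-(R/2)^2/4}$ on $B_1(y_0)$ only cancels this factor and leaves no decay either.

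The honest route is to exploit that $w$ is close to the constant solution, for which $\nabla w\equiv0$. The function $\nabla w$ satisfies $\mathcal L w_i=-\tfrac12 w_i$, a linear equation with bounded coefficients on $B_R(0)$. The function $\nabla\Lambda(w)$ satisfies an analogous linear equation obtained by differentiating \eqref{eq:L Lambda}. Interior $L^\infty$–$L^2$ estimates on unit balls (Moser iteration or Schauder theory, with the drift $\tfrac y2$ of size $R$ handled after rescaling to balls of radius $R^{-1}$) give
\[
|\nabla w(y_0)|^2+|\nabla\Lambda(w)(y_0)|^2\le C R^{n+2}\int_{B_{1/R}(y_0)}\bigl(|\nabla w|^2+|\nabla\Lambda(w)|^2\bigr){\rm d}y.
\]
We then feed in the weighted $L^2$ bound, writing $\nabla\Lambda(w)$ in terms of $\nabla\tau$ and $\nabla w$ via $\Lambda(w)=e^w/\tau$.

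\textbf{Step 3: matching the exponentials.} The exponential factor obtained is $e^{-(R-1)^2/4+(|y_0|+1)^2/4}$. For $|y_0|\le R-3$ its exponent is at most $-\tfrac14\bigl[(R-1)^2-(R-2)^2\bigr]=-\tfrac{2R-3}{4}$, which gives $e^{-R/4}$ up to constants. Combining this with the polynomial factors $R^{n}\cdot R^{n+2}$ (absorbing extra powers of $R$) yields
\[
\sup_{B_{R-3}(0)}\bigl(|\nabla\Lambda(w)|^2+|\nabla w|^2\bigr)\le CR^{2n}e^{-R/4},
\]
which is \eqref{eq:nabla w and nabla Lambda estimate}. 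The delicate point is the interior estimate in Step 2: the coefficient bounds must be uniform in $R$, and the scaling must be chosen so that the drift term does not destroy the polynomial factor.
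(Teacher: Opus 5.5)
Your argument is correct and follows the same route as the paper. Both use the weighted Caccioppoli bound of Lemma \ref{lem nabla w and Lambda integral estimate}, convert it to an unweighted local $L^2$ bound via a lower bound on $\rho$, and finish with rescaled interior $L^\infty$--$L^2$ estimates on balls of radius $1/R$ for $w_i$ and $\Lambda(w)_i$. The difference is only bookkeeping: you take $s=1$ and get the gain $e^{-(2R-3)/4}$ from the local bound $\rho\ge c\,e^{-(R-2)^2/4}$ on $B_1(y_0)$, whereas the paper takes $s=1/2$ and gets $e^{-R/4}$ from the global bound $\rho\ge c\,e^{-(R-1)^2/4}$ on $B_{R-1}(0)$. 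Your remark in Step 1 that localization ``only helps when $|y_0|$ is close to $R$'' is backwards (it helps at every $|y_0|\le R-3$, least at $|y_0|=R-3$), as your own Step 3 computation shows.
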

\begin{proof}
    Taking $s=\frac{1}{2}$ in \eqref{eq:nabla w and Lambda integral estimate}, we arrive at
\begin{equation}\label{eq:s=1/2 estimate}
\int_{B_{R-\frac12}(0)}
\left(
|\nabla \tau|^2 + \tau^2 |\nabla w|^2
\right)
\Lambda(w)^2 \rho{\rm d}y
\le
C R^n \exp\left(-\frac{(R-\frac12)^2}{4}\right).
\end{equation}
It is easy to verify that
\begin{equation*}
\nabla \tau
=
\frac{e^w}{\Lambda(w)} \nabla w
-
\frac{e^w}{\Lambda(w)^2} \nabla \Lambda(w)
=
\tau \nabla w
-
\frac{\tau}{\Lambda(w)} \nabla \Lambda(w).
\end{equation*}
Since $w$ is bounded and $\Lambda(w)$ is bounded above and below away from zero, it follows that $\tau$ is also bounded above and below away from zero. Thus there exists a universal constant $C$ such that
\begin{equation}\label{eq:nabla Lambda}
    |\nabla\tau|^{2}\leq C(|\nabla\Lambda(w)|^{2}+|\nabla w|^{2}).
\end{equation}
Substituting \eqref{eq:nabla Lambda} into \eqref{eq:s=1/2 estimate}, we have
\begin{equation*}
\int_{B_{R-\frac{1}{2}}(0)}
\left(
|\nabla \Lambda(w)|^2 + |\nabla w|^2
\right)
\rho {\rm d}y
\le
C R^n
\exp\!\left(
-\frac{(R-\frac{1}{2})^2}{4}
\right).
\end{equation*}
Since
\begin{equation*}
e^{-\frac{|x|^2}{4}}
\ge
\exp\left\{
-\frac{R^2-2R+1}{4}
\right\},
\quad \text{on } B_{R-1}(0),
\end{equation*}
then we can obtain
\begin{equation}\label{eq:integral decay}
\int_{B_{R-1}(0)}
\left(
|\nabla \Lambda(w)|^2 + |\nabla w|^2
\right){\rm d}y
\le
C R^n \exp^{-\frac{R}{4}},
\end{equation}
which is the integral decay estimate on $\nabla\Lambda(w)$ and $\nabla w$.

On the other hand, by Lemma \ref{lem eigenvalue problem} and \eqref{eq:L Lambda}, we obtain that for each $i=1,\;2,\;\dots,\;n$,
\begin{equation*}
    \mathcal{L} w_i + \frac{1}{2} w_i=0,\quad\text{in }B_{R}(0),
\end{equation*}
and
\begin{equation*}
    \mathcal{L} \Lambda(w)_{i}+ \frac{1}{2} \Lambda(w)_i+\Lambda(w)_i-e^{w}w_{i}\Lambda=0,\quad\text{in }B_{R}(0).
\end{equation*}
Notice that the first order term in $\mathcal{L}$ grows at most linearly. By applying standard elliptic regularity (see \cite[Theorem 9.20]{Gilbarg-T2001}) on  $B_{1/R}(x)$ (for any $x\in B_{R-2}(0)$), we get
\begin{equation}\label{eq:nabla w pointwise estimate}
|\nabla w(x)|^2
\le
C R^n \int_{B_{1/R}(x)} |\nabla w|^2 {\rm d}y .
\end{equation}
\begin{equation}\label{eq:nabla Lambda pointwise estimate}
|\nabla \Lambda(w)(x)|^2
\le
C R^n \int_{B_{1/R}(x)} |\nabla \Lambda(w)|^2 {\rm d}y
+ C R^{2n-2} e^{-\frac{R}{4}}.
\end{equation}
By \eqref{eq:nabla w pointwise estimate}, \eqref{eq:nabla Lambda pointwise estimate} and \eqref{eq:integral decay}, we derive \eqref{eq:nabla w and nabla Lambda estimate}.
\end{proof}

Now we are ready to prove Proposition \ref{prop 1}.

\begin{proof}[Proof of Proposition \ref{prop 1}]
Taking $R_{0}=5\sqrt{n}$ and $\varepsilon=\frac{\varepsilon_{0}}{100}$ in Lemma \ref{lem large scale control gives local rigidity}, then we obtain
\begin{equation}\label{eq R=5sqrtn}
\mathrm{dist}_{5\sqrt{n}}(w,0) + \mathrm{dist}_{5\sqrt{n}}\left(\Lambda(w), 2\right)
\le \frac{\varepsilon_{0}}{100}.
\end{equation}
For any $x\in B_{R-3}(0)$ with $|x_{\ast}|>5\sqrt{n}$, by Lemma \ref{lem pointwise nabla w and Lambda estimate}, we have
\begin{equation}\label{eq w difference bdd}
\left|w(x) - w\left(\frac{x_{\ast}}{|x_{\ast}|}\right)\right|
\le
\left|\int_1^{|x_{\ast}|} \frac{x_{\ast}}{|x_{\ast}|} \cdot \nabla w\left(\xi \frac{x_{\ast}}{|x_{\ast}|}\right) {\rm d}\xi\right|
\le C R^{n+1} e^{-R/8}.
\end{equation}
The same argument also gives
\begin{equation}\label{eq Lambda difference bdd}
\left|\Lambda(w)(x) - \Lambda (w)\left(\frac{x_{\ast}}{|x_{\ast}|}\right)\right|
\le C R^{n+1} e^{-R/8}.
\end{equation}
Now combining \eqref{eq R=5sqrtn} with \eqref{eq w difference bdd}, we get
\begin{equation*}
|w(x)-0| \le \left|w\left(\frac{x_\ast}{|x_{\ast}|}\right)\right| + \left|w(x)-w\left(\frac{x_{\ast}}{|x_{\ast}|}\right)\right| \le \frac{\varepsilon_0}{100} + CR^{n+1}e^{-R/8}.
\end{equation*}
Hence, for $R$ large enough,
\begin{equation*}
|w(x)| \le \frac{\varepsilon_0}{20} \quad \text{for all } x \in B_{R-3}(0).
\end{equation*}
Likewise, using  \eqref{eq R=5sqrtn} together with \eqref{eq Lambda difference bdd}, we derive
\begin{equation*}
|\Lambda(w)(x) - 2| \le \left|\Lambda\left(\frac{x_{\ast}}{|x_{\ast}|}\right) - 2\right| +\left|\Lambda(w)(x)-\Lambda(w)\left(\frac{x_{\ast}}{|x_{\ast}|}\right)\right|  \le \frac{\varepsilon_0}{100} + CR^{n+1}e^{-R/8}.
\end{equation*}
Taking $R$ large enough, we have
\begin{equation*}
|\Lambda (w)(x)-2| \le \frac{\varepsilon_0}{20} \quad \text{for all } x \in B_{R-3}(0).
\end{equation*}
Therefore we conclude that
\begin{equation*}
\mathrm{dist}_{R-3}(w,0) + \mathrm{dist}_{R-3}\left(\Lambda(w), 2\right)
\le \frac{\varepsilon_{0}}{10}.
\end{equation*}
\end{proof}

Next, we prove Proposition \ref{prop 2}.
\begin{proof}[Proof of Proposition \ref{prop 2}]
Set
\begin{equation}\label{eq:u}
    u(x,t)=-\log(-t)+w\left(\frac{x}{\sqrt{-t}}\right).
\end{equation}
By the boundedness of $w$, we know that there exists a small constant $s_{\ast}$ such that
\begin{equation*}
    |u| \le C, \quad \text{in } B_{R-4}(0) \times \left[-1 - \frac{s_\ast}{2}, -1 + s_\ast\right].
\end{equation*}
Then by standard parabolic regularity theory, there exists a universal constant $C$ such that
\begin{equation}\label{eq difference for u and ut}
    |\partial_t u| + |\partial_{tt} u| \le C,\quad \text{in } B_{R-5}(0) \times [-1, -1 + s_\ast].
\end{equation}
Integrating in $t$ yields that for any $x\in B_{R-5}(0)$,
\begin{equation*}
    | u(x, -1)-u(x, -1+s_\ast) | + |\partial_t u(x, -1)-\partial_t u(x, -1+s_\ast) | \leq C s_\ast.
\end{equation*}

By \eqref{eq:u}, we derive
\begin{align*}
\partial_t u(x,t)
&= \frac{1}{-t}
+ \frac{1}{2(-t)}\frac{x}{\sqrt{-t}}\cdot\nabla w\!\left(\frac{x}{\sqrt{-t}}\right) \\
&= \frac{1}{2(-t)}
\left[
2+\frac{x}{\sqrt{-t}}\cdot\nabla w\!\left(\frac{x}{\sqrt{-t}}\right)
\right] \\
&= \frac{1}{2(-t)}\Lambda(w)\!\left(\frac{x}{\sqrt{-t}}\right).
\end{align*}
In particular,
\begin{equation}\label{eq:u and ut at x,-1}
u(x,-1)=w(x) \quad \text{and} \quad \partial_t u(x,-1)=\frac{1}{2}\Lambda (w)(x).
\end{equation}
Inserting \eqref{eq:u and ut at x,-1} into \eqref{eq difference for u and ut}, we deduce that for any $x\in B_{R-5}(0)$
\begin{equation}\label{eq difference for s*}
    \left|w(x)-\left[w\left(\frac{x}{\sqrt{1-s_{\ast}}}\right)-\log(1-s_{\ast})\right]\right|+\frac{1}{2}\left|\Lambda(w)(x)-\frac{1}{1-s_{\ast}}\Lambda(w)\left(\frac{x}{\sqrt{1-s_{*}}}\right)\right|\leq Cs_{\ast}.
\end{equation}
A combination of \eqref{eq condition in prop2} and \eqref{eq difference for s*} leads to
\begin{equation*}
\left| w\left(\frac{x}{\sqrt{1-s_*}}\right)-\log(1-s_{\ast}) \right|
+
\left| \frac{1}{1-s_{\ast}}\Lambda(w)\left(\frac{x}{\sqrt{1-s_*}}\right) - 2 \right|
\le \frac{\varepsilon_0}{10} + Cs_{\ast} .
\end{equation*}
First, this implies
\begin{equation*}
\left| \Lambda(w)\left(\frac{x}{\sqrt{1-s_*}}\right)  \right|
\le \frac{\varepsilon_0}{10} +2+ Cs_{\ast}.
\end{equation*}
Then a Taylor expansion of $\log(1-s_{\ast})$ and $\frac{1}{1-s\ast}$ yields
\begin{equation}\label{Taylor expansion}
\left| w\left(\frac{x}{\sqrt{1-s_*}}\right) \right|
+ \left| \Lambda(w)\!\left(\frac{x}{\sqrt{1-s_*}}\right) - 2 \right|
\le \frac{\varepsilon_0}{10}+ C s_* .
\end{equation}
We now select  $s_{\ast}$ small enough such that
\begin{equation*}
    \frac{\varepsilon_0}{10}+ C s_*\leq \varepsilon_{0},
\end{equation*}
and then select $R_{2}$ large enough such that, for some $\theta>0$,
\begin{equation*}
\frac{R_2-5}{\sqrt{1-s_*}} \ge (1+\theta)R_2.
\end{equation*}
Therefore, \eqref{Taylor expansion} implies that
\begin{equation*}
\operatorname{dist}_{(1+\theta)R}\bigl(w,0\bigr)
+
\operatorname{dist}_{(1+\theta)R}\left(\Lambda(w),2\right)
\le
\varepsilon_{0}.
\end{equation*}
\end{proof}

We now apply Propositions \ref{prop 1} and \ref{prop 2} to prove the rigidity result.
\begin{proof}[Proof of Theorem \ref{theorem 2}]
Take $\widetilde{R}:=2\max\{R_{1},R_{2}\}$, where $R_{1}$ and $R_{2}$ are the constants in Propositions \ref{prop 1} and \ref{prop 2} respectively.

Since $w$ is a solution of \eqref{stationary problem} and satisfies \eqref{eq:rigidity}, standard interior elliptic estimate implies that if $\widetilde{\varepsilon}$ is small enough, then $w$ is smooth in $B_{\widetilde{R}(0)}$ and it satisfies
\begin{equation*}
\mathrm{dist}_{\widetilde{R}-1}(w,0) + \mathrm{dist}_{\widetilde{R}-1}\left(\Lambda(w), 2\right)
\le \frac{\varepsilon_{0}}{10}.
\end{equation*}
We proceed to iterate Propositions \ref{prop 1} and \ref{prop 2} alternately. Since $\widetilde{R}$ depends only on $n$ and $m$, $\widetilde{\varepsilon}$ can be selected as a universal positive constant. Note that Proposition \ref{prop 2} increases the scale by a factor greater than one, while Proposition \ref{prop 1} only incurs a fixed loss in radius in order to obtain the improved estimate. Therefore, as long as the starting scale $\widetilde{R}$ is sufficiently large, the argument may be iterated, and we arrive at
\begin{equation}\label{eq: sup rigidity}
\sup_{y \in \mathbb{R}^n} |w(y)|
+ \sup_{y \in \mathbb{R}^n} |\Lambda(w)(y) - 2|
\le \frac{\varepsilon_0}{10}.
\end{equation}
\eqref{eq: sup rigidity} implies that $w$ is a solution of \eqref{stationary problem} such that $\Lambda(w)$ does not change sign. By Lemma \ref{lem not change sign}, we conclude that $w\equiv0$.
\end{proof}
\section*{Acknowledgments}
Yuxia Guo was supported by the National Natural Science Foundation of China(No. 12271283) and  National Key R\&D Program (2023YFA1010002).

\section*{Statements and Declarations}

The authors confirm that there are no relevant financial or non-financial competing interests to report.
	
\section*{Data Availability Statements}

All data generated or analyzed during this study are included in this article.

\end{document}